\numberwithin{equation}{section}
\newtheorem{thm}{Theorem}[section]
\newtheorem{lem}[thm]{Lemma}
\newtheorem{prop}[thm]{Proposition}
\newtheorem{defin}{Definition}
\newtheorem{rem}[thm]{Remark}
\newcommand\cA{{\mathcal A}}
\newcommand\cB{{\mathcal B}}
\newcommand\cC{{\mathcal C}}
\newcommand\cD{{\mathcal D}}
\newcommand\cF{{\mathcal F}}
\newcommand\cK{{\mathcal K}}
\newcommand\cL{{\mathcal L}}
\newcommand\cO{{\mathcal O}}
\newcommand\cS{{\mathcal S}}
\newcommand\cW{{\mathcal W}}
\newcommand\bE{{\mathbb E}}
\newcommand\bG{{\mathbb G}}
\newcommand\bN{{\mathbb N}}
\newcommand\bP{{\mathbb P}}
\newcommand\bR{{\mathbb R}}
\newcommand\bT{{\mathbb T}}
\newcommand\frkL{{\mathfrak L}}
\newcommand\ve{\varepsilon}
\newcommand\vf{\varphi}
\newcommand\Const{C_\#}
\newcommand\const{c_\#}
\newcommand\Id{{\mathds{1}}}
\newcommand{\st}{\;|\;}
\newcommand\nc[1]{_{\cC^{#1}}}
\newcommand{\deh}{{\textup{d}}}
\newcommand{\stdf}{\frkL}
\newcommand{\invr}{^{-1}}
\newcommand\tell{{\tilde\ell}}
\newcommand{\vei}{\ve^{-1}}
\newcommand{\nt}{\text{c}}
\newcommand{\expb}{b}
\newcommand{\expo}[1]{\exp(#1)}
\newcommand{\expa}{a}
\newcommand{\Dtr}{\mathfrak D}%{\cD}
\newcommand{\ThetaAvg}{{\overline\Theta}}
\newcommand{\sigmaold}{s}
\newcommand{\nuold}{v}
\newcommand{\muold}{r}
\begin{document}
\title[Martingales]{The martingale approach\\ after\\
  Varadhan and Dolgopyat }
\author{Jacopo De Simoi}
\address{Jacopo De Simoi\\
  Department of Mathematics\\
  University of Toronto\\
  40 St George St. Toronto, ON M5S 2E4}
\email{{\tt jacopods@math.utoronto.ca}}
\urladdr{\href{http://www.math.utoronto.ca/jacopods}{http://www.math.utoronto.ca/jacopods}}
\author{Carlangelo Liverani}
\address{Carlangelo Liverani\\
  Dipartimento di Matematica\\
  II Universit\`{a} di Roma (Tor Vergata)\\
  Via della Ricerca Scientifica, 00133 Roma, Italy.}  \email{{\tt liverani@mat.uniroma2.it}}

\date\today
%\date{ {\bf File: {\jobname}.tex.}}% eliminate in final version
\begin{abstract}
  We present, in the simplest possible form, the so called {\em martingale problem} strategy to establish limit
  theorems. The presentation is specially adapted to problems arising in partially hyperbolic dynamical systems. We will
  discuss a simple partially hyperbolic example with fast-slow variables and use the martingale method to prove an
  averaging theorem and study fluctuations from the average. The emphasis is on ideas rather than on results. Also, no
  effort whatsoever is done to review the vast literature of the field.
\end{abstract}
\thanks{ C.L. thanks Konstantin Khanin for pointing out (long time ago, during the 2011 {\em Thematic Program on
    Dynamics and Transport in Disordered Systems} at the Fields Institute) the need for this note and forcing him to
  start thinking about it. We thank Mikko Stenlund and Denis Volk for several comments on a preliminary version.  We
  thank Dmitry Dolgopyat, Alexey Korepanov, Zemer Kosloff, Ian Melbourne, Mark Pollicott and the anonymous referees for
  providing several suggestions which improved the readability of the current version.  We thank the Centre Bernoulli,
  Lausanne, Switzerland where part of this notes were written. Both authors have been partially supported by the ERC
  Grant MALADY (ERC AdG 246953). J.D.S. acknowledges partial NSERC support}
\maketitle

%%%PAPER
\tableofcontents
\section{Introduction}
In this note\footnote{ A first, preliminary, version of this note was prepared by the second author for a
  mini course at the conference {\em Beyond Uniform Hyperbolicity} in Bedlewo, Poland, held at the end of May 2013,
  which, ultimately, he could not attend.  The note was then extended and presented during the semester {\em Hyperbolic
    dynamics, large deviations and fluctuations} held at the Bernoulli Centre, Lausanne, January--June 2013.} we purport
to explain in the simplest possible terms a strategy to investigate the statistical properties of dynamical systems put
forward by Dmitry Dolgopyat~\cite{D1}.  It should be remarked that Dolgopyat has adapted to the field of Dynamical
Systems a scheme developed by Srinivasa Varadhan and collaborators first for the study of stochastic process arising
from a diffusion~\cite{SV}, then for the study of limit theorems (e.g. the hydrodynamics limit), starting with the
pioneering \cite{GPV}, and large deviations, e.g. \cite{DV}.\footnote{ It should be noted that the above has no
  pretension of being an exact historical reconstruction, it just describes the way we learned this material. Indeed,
  some of the relevant ideas were previously present. See, e.g., the reference~\cite{Kha} pointed out to us by Sergei
  Kuksin.}  The adaptation is highly non trivial as in the case of Dynamical Systems two basic tools commonly used in
probability (conditioning and It\=o calculus) are missing.  The lesson of Dolgopyat is that such tools can be recovered
nevertheless, provided one looks at the problem in the {\em right} way.

Rather than making an abstract exposition, we prefer a hands-on presentation.  Hence, we will illustrate the method by
discussing a super simple (but highly non trivial) example.

The presentation is especially aimed at readers in the field of Dynamical Systems. Thus probabilists could find the exposition at times excessively detailed and/or redundant and at other times a bit too fast.

\subsection{Fast-Slow partially hyperbolic systems}\

\noindent We are interested in studying fast-slow systems in which the fast variable undergoes a strongly chaotic
motion.  Namely, let $M, S$ be two compact Riemannian manifolds, let $X=M\times S$ be the configuration space of our
systems and let $m_{\textrm{Leb}}$ be the Riemannian measure on $M$.  For simplicity, we consider only the case in which $S=\bT^d$
for some $d\in\bN$.  We consider a map $F_0\in\cC^r(X, X)$, $r\geq 3$, defined by
\[
F_0(x,\theta)=(f(x, \theta),\theta)
\]
where the maps $f(\cdot,\theta)$ are uniformly hyperbolic for every $\theta$.  If we consider a small perturbation of
$F_0$ we note that the perturbation of $f$ still yields a uniformly hyperbolic system, by structural stability. Thus
such a perturbation can be subsumed in the original maps.  Hence, it suffices to study families of maps of the form
\[
F_\ve(x,\theta)=(f(x,\theta), \theta+\ve\omega(x,\theta))
\]
with $\ve\in (0,\ve_0)$, for some $\ve_0$ small enough, and $\omega\in\cC^r$.

Such systems are called fast-slow, since the variable $\theta$, the slow variable, needs a time at least $\ve^{-1}$ to change substantially.

The basic question is {\bf what are the statistical properties of $F_\ve$?}

The answer to such a question is at the end of a long road that starts with the attempt to understand the dynamics
for times of order $\ve^{-1}$.  In this note we will concentrate on such a preliminary problem and will describe how to
overcome the first obstacles along the path we would like to walk.

\subsection{The unperturbed system: $\ve=0$}\label{sec:unperturbed}\

\noindent The statistical properties of the system are well understood in the case $\ve=0$. In such a case $\theta$ is
an invariant of motion, while for every $\theta$ the map $f(\cdot, \theta)$ has strong statistical properties. We
will need such properties in the following discussion which will be predicated on the idea that, for times long but much
shorter than $\vei$, on the one hand $\theta$ remains almost constant, while, on the other hand, its change depends
essentially on the behavior of an ergodic sum with respect to a fixed dynamics $f(\cdot, \theta)$.  It is not obvious which exact general properties are necessary to prove the type of results we are interested in. Yet, let us give an idea of the situation by listing the main properties that we will need, and use, in the following.

\begin{enumerate}
\item the maps $f(\cdot, \theta)$ admit a unique SRB (Sinai--Ruelle--Bowen) measure $m_\theta$.
\item the measure $m_\theta$, when seen as an element of $\cC^1(M,\bR)'$, is differentiable in $\theta$.
\item there exists $C_0,\alpha>0$ such that, for each $g,h\in \cC^1(M,\bR)$, we have\footnote{ We remark that a slower
    decay of correlation could suffice, but let us keep things simple.}
\[
\begin{split}
  &\left| m_{\textrm{Leb}}(h\cdot g\circ f^n(\cdot,\theta))-m_\theta(g)m_{\textrm{Leb}}(h)\right|\leq C_0 e^{-\alpha n}\|h\|_{\cB_1}\|g\|_{\cB_2},\\
  &\left| m_{\theta}(h\cdot g\circ f^n(\cdot,\theta))-m_\theta(g)m_{\theta}(h)\right|\leq C_0 e^{-\alpha
    n}\|h\|_{\cB_1}\|g\|_{\cB_2},
\end{split}
\]
where $\cB_1,\cB_2$ are appropriate Banach spaces.\footnote{ The exact required properties for the Banach spaces vary
  depending on the context. In the context that we are going to consider nothing much is needed. Yet, in general, it could be helpful to have
  properties that allow to treat automatically multiple correlations: let $\{g_1,g_2,g_3\}\subset C^1$, then
\[
m_{\textrm{Leb}}(g_1\cdot (g_2\circ f^n\cdot g_3)\circ f^m)= m_{\textrm{Leb}}(g_1)m_\theta(g_2\circ f^n\cdot
g_3)+\cO(e^{-\alpha m}\|g_1\|_{\cB_1}\|g_2\circ f^n \cdot g_3\|_{\cB_2}).
\]
Thus, in order to have automatically decay of multiple correlations we need, at least, $ \|g_2\circ f^n \|_{\cB_2}\leq
\Const \|g_2\|_{\cB_2}$, which is false, for example, for the $\cC^1$ norm.}
\end{enumerate}

The above properties hold for a wide class of uniformly hyperbolic systems, \cite{babook, GL1, GL2, BT}, yet here, to further
simplify the exposition, we assume that $M=\bT^1$ and
\begin{equation}\label{eq:hyperbolicity}
\partial_x f\geq \lambda>2.
\end{equation}
Then a SRB measure is just a
measure absolutely continuous with respect to Lebesgue and all the above properties are well known with the choices
$\cB_1=\cC^1$ and $\cB_2=\cC^0$ or $\cB_1=\operatorname{BV}$ and $\cB_2=L^1$ (see \cite{L1} for a fast and elementary
exposition or \cite{babook} for a more complete discussion).
\begin{rem} For the wondering reader: in all the following arguments the case of an higher dimensional expanding map can be treated in almost exactly the same way (a part from a slightly heavier notation).\footnote{ Simply, the support of a standard pair will be a ball rather than a segment.} On the contrary, the case of a hyperbolic map is a bit more complex (although the logic of the argument remains exactly the same) due to the different definition of standard pairs necessary to handle the stable direction. See \cite{D1} for details.
\end{rem}
\begin{rem} Note that in the following we do not require or use the exact knowledge of the spectrum of the transfer
  operator.\footnote{ The transfer operator $\cL_\theta$ is simply the adjoint of the dynamics,
    i.e. $\cL_\theta\mu(g)=\mu(g\circ f(\cdot,\theta))$, when acting on an appropriate class of measures.} Yet, a detailed
  understanding of the transfer operator might be necessary in order to obtain sharper results.
\end{rem}

It follows that the dynamical systems $(X, F_0)$ has uncountable many SRB measures: all the measures of the form
$\mu(\vf)=\int \vf(x,\theta) m_\theta(dx)\nu(d\theta)$ for an arbitrary measure $\nu$. The ergodic measures are the ones
in which $\nu$ is a point mass. The system is partially hyperbolic and has a central foliation. Indeed, the $f(\cdot,
\theta)$ are all topologically conjugate by structural stability of expanding maps \cite{KH}. Let $h(\cdot,\theta)$ be
the map conjugating $f(\cdot,0)$ with $f(\cdot,\theta)$, that is $h(f(x,0),\theta)=f(h(x,\theta),\theta)$. Thus the
foliation $W^c_x=\{ (h(x,\theta), \theta)\}_{\theta\in S}$ is invariant under $F_0$ and consists of points that stay,
more or less, always at the same distance, hence it is a center foliation.  Note however that, since in general $h$ is
only a H\"older continuous function (see~\cite{KH}) the foliation is very irregular and, typically, not absolutely
continuous.

In conclusion, the map $F_0$ has rather poor statistical properties and a not very intuitive description as a partially
hyperbolic system. It is then not surprising that its perturbations form a very rich universe to explore and already the
study of the behavior of the dynamics for times of order $\vei$ (a time long enough so that the variable $\theta$ has a
non trivial evolution, but far too short to investigate the statistical properties of $F_\ve$) is interesting and non
trivial.

\section{Preliminaries and results}\label{sec:prelim}
Let $\mu_0$ be a probability measure on $X$.  Let us define $(x_n,\theta_n)=F_\ve^n(x,\theta)$, then $(x_n,\theta_n)$
are random variables~\footnote{ Recall that a {\em random variable} is a measurable function from a probability space to
  a measurable space.} if $(x_0,\theta_0)$ are distributed according to $\mu_0$.\footnote{ That is, the probability
  space is $X$ equipped with the Borel $\sigma$-algebra, $\mu_0$ is the probability measure and $(x_n,\theta_n)$ are
  functions of $(x,\theta)\in X$.}  It is natural to define the polygonalization\footnote{ Since we interpolate between
  close points the procedure is uniquely defined in $\bT$.}
\begin{equation}\label{eq:path}
\Theta_\ve(t)=\theta_{\lfloor\ve^{-1} t\rfloor}+(t- \ve\lfloor\ve^{-1} t\rfloor)(\theta_{\lfloor\ve^{-1} t\rfloor+1}-\theta_{\lfloor\ve^{-1} t\rfloor}),\quad t\in[0,T].
\end{equation}
Note that $\Theta_\ve$ is a random variable on $X$ with values in $\cC^0([0,T],S)$. Also, note the time rescaling done so that one expects non trivial paths.

It is often convenient to consider random variables defined directly on the space $\cC^0([0,T],S)$ rather than $X$. Let us
discuss the set up from such a point of view.  The space $\cC^0([0,T],S)$ endowed with the uniform topology is a
separable metric space. We can then view $\cC^0([0,T],S)$ as a probability space equipped with the Borel $\sigma$-algebra. It
turns out that such a $\sigma$-algebra is the minimal $\sigma$-algebra containing the open sets
$\bigcap_{i=1}^n\{\vartheta\in \cC^0([0,T],S)\st \vartheta(t_i)\in U_i\}$ for each $\{t_i\}\subset [0,T]$ and open sets $U_i\subset S$, \cite[Section
1.3]{SV}. Since $\Theta_\ve$ can be viewed as a continuous map from $X$ to $\cC^0([0,T], S)$, the measure $\mu_0$
induces naturally a measure $\bP^\ve$ on $\cC^0([0,T],S)$: $\bP^\ve=(\Theta_\ve)_*\mu_0$.\footnote{ Given a measurable
  map $T:X\to Y$ between measurable spaces and a measure $P$ on $X$, $T_*P$ is a measure on $Y$ defined by
  $T_*P(A)=P(T^{-1}(A))$ for each measurable set $A\subset Y$.}  Also, for each $t\in [0,T]$ let $\Theta(t)\in
\cC^0(\cC^0([0,T],S),S)$ be the random variable defined by $\Theta(t,\vartheta)=\vartheta(t)$, for each $\vartheta\in\cC^0([0,T],S)$. Next, for
each $\cA\in\cC^0(\cC^0([0,T],S),\bR)$, we will write $\bE^\ve(\cA)$ for the expectation with respect to $\bP^\ve$.
For $A\in \cC^0(S,\bR)$ and $t\in[0,T]$, $\bE^\ve(A\circ \Theta(t))=\bE^\ve(A( \Theta(t)))$ is the expectation of the function
$\cA(\vartheta)=A(\vartheta(t))$, $\vartheta\in\cC^0([0,T],S)$.

To continue, a more detailed discussion concerning the initial conditions is called for.  Note that not all measures are
reasonable as initial conditions.  Just think of the possibility to start with initial conditions given by a point mass,
hence killing any trace of randomness. The best one can reasonably do is to fix the slow variable and leave the
randomness only in the fast one. Thus we will consider measures $\mu_0$ of the following type: for each
$\vf\in\cC^0(X,\bR)$, $\mu_0(\vf)=\int \vf(x,\theta_0) h(x)dx$ for some $\theta_0\in S$ and $h\in\cC^1(M,\bR_+)$. Our
first problem is to understand $\lim_{\ve\to 0}\bP^\ve$. After some necessary preliminaries, in Section
\ref{sec:average} we will prove the following result..
\begin{thm}\label{thm:average} The measures $\{\bP^\ve\}$ have a weak limit $\bP$, moreover $\bP$ is a measure supported
  on the trajectory determined by the O.D.E.
\begin{equation}\label{eq:average0}
\begin{split}
  \dot{\ThetaAvg}=\bar\omega(\ThetaAvg)\\
  \ThetaAvg(0)=\theta_0
\end{split}
\end{equation}
where $\bar\omega(\theta)=\int_M \omega(x,\theta) m_\theta(dx)$.
\end{thm}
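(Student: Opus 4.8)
\emph{Plan of proof.}
The goal is to establish two things: that $\{\bP^\ve\}$ is tight, and that every weak limit point coincides with $\delta_{\ThetaAvg}$, the Dirac mass at the unique solution of \eqref{eq:average0}; since this forces all subsequential limits to agree, the full family then converges. Tightness is immediate: from the definition $|\theta_{n+1}-\theta_n|=\ve|\omega(x_n,\theta_n)|\le\ve\|\omega\|_{\cC^0}$, so each realization $t\mapsto\Theta_\ve(t)$ is Lipschitz with constant $\|\omega\|_{\cC^0}$, uniformly in $\ve$; hence all the $\Theta_\ve$ take values in one fixed equi-Lipschitz, uniformly bounded (thus compact) subset of $\cC^0([0,T],S)$, and Prokhorov's theorem applies. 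I also record two elementary facts: by property~(2) the map $\bar\omega$ is Lipschitz, so \eqref{eq:average0} has a unique solution $\ThetaAvg$ and the functional $\vartheta\mapsto\int_0^t\bar\omega(\vartheta(s))\,ds$ is bounded and continuous on $\cC^0([0,T],S)$.

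Let $\bP$ be a subsequential weak limit. Since $\vartheta\mapsto\bigl|\vartheta(t)-\theta_0-\int_0^t\bar\omega(\vartheta(s))\,ds\bigr|$ is bounded and continuous for each fixed $t$, it suffices to prove
\begin{equation}\tag{$\star$}
\bE^\ve\Bigl[\,\bigl|\Theta(t)-\theta_0-\textstyle\int_0^t\bar\omega(\Theta(s))\,ds\bigr|\,\Bigr]\to 0\quad\text{as }\ve\to0,\ \text{for every }t\in[0,T]:
\end{equation}
passing $(\star)$ to the limit along the subsequence shows that $\bP$-a.e.\ path satisfies $\vartheta(t)=\theta_0+\int_0^t\bar\omega(\vartheta(s))\,ds$ for a dense set of $t$, hence all $t$ by continuity, hence $\vartheta=\ThetaAvg$ by uniqueness, i.e.\ $\bP=\delta_{\ThetaAvg}$. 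To prove $(\star)$ I would use the exact identity $\theta_N=\theta_0+\ve\sum_{k=0}^{N-1}\omega(x_k,\theta_k)$ with $N=\lfloor\ve^{-1}t\rfloor$, together with $\Theta_\ve(t)=\theta_N+\cO(\ve)$ and the Riemann-sum estimate $\int_0^t\bar\omega(\Theta_\ve(s))\,ds=\ve\sum_{k=0}^{N-1}\bar\omega(\theta_k)+\cO(\ve)$ (both from the Lipschitz bounds on $\Theta_\ve$ and $\bar\omega$); then $(\star)$ reduces to
\begin{equation}\tag{$\star\star$}
\ve\,\bE^\ve\Bigl[\,\bigl|\textstyle\sum_{k=0}^{N-1}\bigl(\omega(x_k,\theta_k)-\bar\omega(\theta_k)\bigr)\bigr|\,\Bigr]\to 0 .
\end{equation}

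To handle $(\star\star)$ I would fix a mesoscopic scale $L=L(\ve)=\lfloor\ve^{-1/2}\rfloor$, so that $L\to\infty$ and $\ve L\to 0$, and split $\{0,\dots,N-1\}$ into $\sim N/L$ consecutive blocks $B_j=\{jL,\dots,(j+1)L-1\}$. On $B_j$ one has $\theta_k=\theta_{jL}+\cO(\ve L)$, so, $\omega$ and $\bar\omega$ being Lipschitz, freezing $\theta$ at $\theta_{jL}$ throughout the block costs $\cO(\ve L)$ per term, i.e.\ $\cO(\ve\cdot N\cdot\ve L)=\cO(\ve t L)=o(1)$ once multiplied by $\ve$. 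Writing $\Psi_j=\sum_{i=0}^{L-1}\bigl(\omega(x_{jL+i},\theta_{jL})-\bar\omega(\theta_{jL})\bigr)$, it remains to show $\ve\sum_j\bE^\ve[|\Psi_j|]\to 0$, which by Cauchy--Schwarz follows from the \emph{uniform} bound $\bE^\ve[\Psi_j^2]=\cO(L)$, since then $\ve\sum_j\bE^\ve[|\Psi_j|]=\cO\bigl(\ve\cdot(N/L)\cdot L^{1/2}\bigr)=\cO(t\,L^{-1/2})=o(1)$.

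The bound $\bE^\ve[\Psi_j^2]=\cO(L)$ is where the hyperbolicity of the fast variable enters, and this is the main obstacle: there is no filtration to condition on at time $jL$. The substitute is the standard-pair formalism. Since $\mu_0$ is, up to a single harmless iteration, an average of standard pairs carried by $M\times\{\theta_0\}$, one shows --- this is the key technical step, built from \eqref{eq:hyperbolicity} and properties~(1)--(3) --- that $(F_\ve^{jL})_*\mu_0=\sum_\beta c_\beta\,\mu_\beta$ is an average of standard pairs, each carried by a curve along which the $\theta$-coordinate is constant up to $\cO(\ve)$, say $\theta\equiv\bar\theta_\beta$. The argument then rests on the equidistribution estimates for a standard pair $\mu_\beta$ with $\theta$-value $\bar\theta$, namely
\[
\bE_{\mu_\beta}\bigl[\omega(x_n,\theta_n)\bigr]=\bar\omega(\bar\theta)+\cO(e^{-\alpha n}+\ve n),
\]
and the bilinear refinement
\[
\bE_{\mu_\beta}\bigl[(\omega(x_n,\theta_n)-\bar\omega(\bar\theta))\,(\omega(x_{n+s},\theta_{n+s})-\bar\omega(\bar\theta))\bigr]=\cO(e^{-\alpha s}+\ve(n+s)),
\]
valid uniformly over standard pairs for $n+s\le\ve^{-1}$. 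Granting these, expanding $\Psi_j^2$ and summing over $0\le i,i'\le L-1$ on each $\mu_\beta$ (where $\theta_{jL}=\bar\theta_\beta+\cO(\ve)$) gives $\sum_{i,i'}\bigl(e^{-\alpha|i-i'|}+\ve L\bigr)=\cO(L)+\cO(\ve L^3)=\cO(L)$, and averaging over $\beta$ yields $\bE^\ve[\Psi_j^2]=\cO(L)$, which finishes the proof. The equidistribution estimates themselves are proved by pushing a standard pair forward under $F_\ve$: the $\theta$-coordinate drifts only $\cO(\ve n)$ in $n$ steps, while the $x$-marginal approaches $m_{\bar\theta}$ at rate $\cO(e^{-\alpha n}+\ve n)$, because the uniform expansion \eqref{eq:hyperbolicity} makes the transfer operators $\cL_\theta$ contract and these are perturbed by only $\cO(\ve)$ at each step; the bilinear version follows by re-applying this after the first $n$ steps, the time-$n$ push-forward being again an average of standard pairs.
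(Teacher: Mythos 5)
Your proof is correct, but it follows a genuinely different route from the paper. The paper's strategy is to prove the uniform estimate $\sup_\ell|\bE^\ve_\ell(A(\Theta(t))-A(\Theta(0))-\int_0^t\langle\bar\omega,\nabla A\rangle(\Theta(s))\,ds)|\to 0$ for test functions $A$, upgrade it to a conditional statement via the standard-pair conditioning lemmas, conclude that every accumulation point solves a martingale problem for the first-order operator $\cL A=\langle\bar\omega,\nabla A\rangle$, and finally prove uniqueness of that martingale problem by a Gronwall argument on $\bE_y(\|\Theta(t)-\ThetaAvg(t)\|^2)$. You bypass conditioning and the martingale problem entirely: since the limit is deterministic, it suffices to show the defect $\Theta(t)-\theta_0-\int_0^t\bar\omega(\Theta(s))\,ds$ tends to $0$ in $L^1(\bP^\ve)$, which you reduce by blocking at the mesoscopic scale $L=\ve^{-1/2}$ and Cauchy--Schwarz to the second-moment bound $\bE^\ve[\Psi_j^2]=\cO(L)$ — essentially the paper's Lemma \ref{lem:variance-basic}, which the paper only proves later, for the fluctuation theorem. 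Your route is more elementary and self-contained for this particular statement; what it gives up is exactly what the paper is trying to showcase, namely a scheme that survives when the limit is a genuine diffusion (Theorem \ref{thm:diffusion}), where an $L^1$ bound on a defect cannot identify the limit law and the martingale formulation becomes indispensable. One point you should spell out more carefully: your bilinear estimate is not obtained by ``re-applying'' the first-moment equidistribution after $n$ steps. After decomposing at time $n$, the factor $\hat\omega(x_0,\cdot)$ sits at time zero of the new standard pairs and must be absorbed into the density — written as a bounded combination $\alpha_1\hat\rho_1+\alpha_2\hat\rho_2$ of standard densities — before pushing forward again; moreover the shadowing error $\cO(\ve m^2)$ is $\cO(1)$ over a whole block of length $\ve^{-1/2}$, so one must decompose again at time $n+s-L'$ with $L'\sim\log$ and shadow only over that short window, exactly as in the paper's proof of Lemma \ref{lem:variance-basic}. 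With that mechanism supplied, your claimed error terms are achievable and the argument closes.
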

The above theorem specifies in which sense the random variable $\Theta_\ve$ converges to the average dynamics described
by equation \eqref{eq:average0}.

The next natural question is how fast the convergence takes place.  To this end it is natural to consider, for each $t\in[0,T]$,
\[
\zeta_\ve(t)=\ve^{-\frac 12}\left[\Theta_\ve(t)-\ThetaAvg(t)\right].
\]
Note that $\zeta_\ve$ is a random variable on $X$ with values in $\cC^0([0,T],\bR^d)$ which describes the fluctuations around the
average.\footnote{ Here we are using that $S=\bT^d$ can be lifted to its universal cover $\bR^d$.} Let
$\widetilde\bP^\ve$ be the path measure describing $\zeta_\ve$ when $(x_0,\theta_0)$ are distributed according to the
measure $\mu_0$.  That is, $\widetilde\bP^\ve=(\zeta_\ve)_*\mu_0$.  Our second task, and the last in this note, will be
to understand the limit behavior of $\widetilde\bP^\ve$, hence of the fluctuation around the average. Section \ref{sec:fluctuations} will be devoted to proving the following result.
\begin{thm}\label{thm:diffusion} The measures $\{\widetilde\bP^\ve\}$ have a weak limit $\widetilde\bP$. Moreover,
  $\widetilde\bP$ is the measure of the zero average Gaussian process defined by the Stochastic Differential Equation (SDE)
\begin{equation}\label{eq:diffusion0}
\begin{split}
&d\zeta=D\bar\omega(\ThetaAvg)\zeta dt +\sigma(\ThetaAvg)dB\\
&\zeta(0)=0,
\end{split}
\end{equation}
where $B$ is the $\bR^d$ dimensional standard Brownian motion and the diffusion coefficient $\sigma$ is given
by~\footnote{ In our notation, for any measure $\mu$ and vectors $v,w$, $\mu(v\otimes w)$ is a matrix with entries
  $\mu(v_i w_j)$.}
\begin{equation}\label{e_definitionBarChi}
\begin{split}
  \sigma(\theta)^2 =& m_\theta\left(\hat\omega(\cdot,\theta)\otimes\hat\omega(\cdot,\theta)\right)+ \sum_{m=1}^{\infty}
   m_\theta\left( \hat\omega(f_\theta^m(\cdot),\theta)\otimes\hat\omega(\cdot,\theta)\right)+\\
   &+ \sum_{m=1}^{\infty}m_\theta\left( \hat\omega(\cdot,\theta)\otimes\hat\omega(f_\theta^m(\cdot),\theta)\right).
\end{split}
\end{equation}
where $\hat\omega=\omega-\bar\omega$ and we have used the notation $f_\theta(x)=f(x,\theta)$.  In addition, $\sigma^2$
is symmetric and non-negative, hence $\sigma$ is uniquely defined as a symmetric positive definite matrix. Finally,
$\sigma(\theta)$ is strictly positive, unless $\hat\omega(\theta, \cdot)$ is a coboundary for $f_\theta$.
\end{thm}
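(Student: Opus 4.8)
The plan is to establish tightness of the family $\{\widetilde\bP^\ve\}$ first, then identify any weak limit point as the solution of the martingale problem associated to the SDE \eqref{eq:diffusion0}, and finally conclude by uniqueness of that martingale problem. For tightness, I would apply the standard Kolmogorov-type criterion on $\cC^0([0,T],\bR^d)$: it suffices to bound $\bE^\ve\big(|\zeta_\ve(t)-\zeta_\ve(s)|^{2p}\big)\leq \Const|t-s|^{1+\delta}$ for some $p$ and $\delta>0$. Writing $\zeta_\ve(t)-\zeta_\ve(s)$ as the rescaled difference $\ve^{-1/2}\big[\sum_{k} \ve\,\hat\omega(x_k,\theta_k) + (\text{linearization error in }D\bar\omega)\big]$ over $k$ ranging in an interval of length $\sim \ve^{-1}(t-s)$, the decay of correlations hypothesis (item (3) in Section \ref{sec:unperturbed}) gives the needed moment bounds exactly as in the proof of Theorem \ref{thm:average}; the rescaling by $\ve^{-1/2}$ is the right one precisely because the ergodic sum of $\hat\omega$ (which has zero $m_\theta$-average) is diffusive.

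Next, for a limit point $\widetilde\bP$, I would show that for every $g\in\cC^2(\bR^d,\bR)$ the process
\[
g(\zeta(t)) - g(0) - \int_0^t \Big[ \langle D\bar\omega(\ThetaAvg(s))\zeta(s), \nabla g(\zeta(s))\rangle + \tfrac12 \operatorname{tr}\!\big(\sigma(\ThetaAvg(s))^2 \nabla^2 g(\zeta(s))\big)\Big]\,ds
\]
is a $\widetilde\bP$-martingale with respect to the natural filtration. On the pre-limit side this amounts to a Taylor expansion of $g(\zeta_\ve(t))$ along the discrete dynamics: the first-order term produces the drift $D\bar\omega(\ThetaAvg)\zeta$ (here one needs that $\Theta_\ve\to\ThetaAvg$ from Theorem \ref{thm:average}, so $\hat\omega$ gets evaluated essentially along the deterministic trajectory), and the second-order term, after summing $\ve$ times products $\hat\omega(x_j,\theta_j)\otimes\hat\omega(x_k,\theta_k)$, converges by decay of correlations to the Green--Kubo-type sum, which is exactly $\sigma(\theta)^2$ as written in \eqref{e_definitionBarChi}. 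The cross terms and the remainder are controlled again through the correlation bounds, using that the "conditioning at a fixed time" — the step that is automatic in probability — is here replaced by the Dolgopyat device of representing $\mu_0$-pushforwards via standard pairs and exploiting equidistribution on scales long compared to $1$ but short compared to $\ve^{-1}$.

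The remaining, mostly soft, points: $\sigma^2$ is symmetric by inspection of \eqref{e_definitionBarChi} (the two sums are transposes of each other) and non-negative because it is the limiting covariance of the ergodic sums, $\operatorname{Var}_{m_\theta}\!\big(n^{-1/2}\sum_{k=0}^{n-1}\hat\omega(f_\theta^k(\cdot),\theta)\big)\to \sigma(\theta)^2\geq 0$; hence it has a unique symmetric positive-semidefinite square root $\sigma(\theta)$. The degeneracy statement is the classical criterion: $\langle \sigma(\theta)^2 v,v\rangle = 0$ for some $v\neq 0$ iff the scalar ergodic sums of $\langle v,\hat\omega(\cdot,\theta)\rangle$ are $L^2(m_\theta)$-bounded, which by the Livšic/Leonov coboundary characterization is equivalent to $\langle v,\hat\omega(\cdot,\theta)\rangle$ being an $L^2$ (hence, by regularity of $f_\theta$, continuous) coboundary for $f_\theta$ — i.e. $\hat\omega(\cdot,\theta)$ is a coboundary in the direction $v$. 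Finally, uniqueness of the martingale problem for \eqref{eq:diffusion0} is standard since the coefficients are Lipschitz (the drift is linear with bounded-in-$s$ coefficient $D\bar\omega(\ThetaAvg(s))$ and $\sigma(\ThetaAvg(\cdot))$ is continuous), so the limit point is unique and the whole family converges.

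I expect the main obstacle to be the second-order term in the Taylor expansion: controlling $\ve\sum_{j<k}\hat\omega(x_j,\theta_j)\otimes\hat\omega(x_k,\theta_k)$ and showing it converges to $\int_0^t\sigma(\ThetaAvg(s))^2\,ds$ uniformly enough, since this requires decoupling the fast variable at two times $j,k$ that may be close, handling the slow drift of $\theta_k$ away from $\theta_0$ over the $\ve^{-1}$ time scale, and doing all of this without the Markov/conditioning tools — precisely the place where the standard-pair machinery of \cite{D1} must be invoked in earnest.
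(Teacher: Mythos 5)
Your proposal is correct and follows essentially the same route as the paper: tightness via Kolmogorov's moment criterion (the paper uses the fourth moment, controlled by a Green--Kubo-type correlation bound on standard pairs), identification of limit points through a second-order Taylor expansion whose drift and diffusion terms are extracted by the standard-pair conditioning device, the Liv\v{s}ic/$L^2$-boundedness argument for the degeneracy of $\sigma^2$, and uniqueness of the martingale problem. The only divergence is the last step: you invoke the general Lipschitz-coefficient uniqueness theory, whereas the paper gives a self-contained argument (conjugating by the fundamental solution $S(t)$ of the linearized flow to remove the drift and then computing the characteristic function explicitly), which also directly yields the claimed Gaussianity of the limit; the paper itself notes that the Stroock--Varadhan route you cite is available.
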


\begin{rem}\label{rem:gauss-sol} Note that, setting $\psi(\lambda,t)=\bE(e^{i\langle\lambda,\zeta(t)\rangle})$, equation \eqref{eq:diffusion0} implies, by It\=o's formula, that
\[
\begin{split}
&\partial_t\psi=\langle\lambda, D\bar\omega \partial_\lambda\psi\rangle-\frac 12\langle \lambda,\sigma^2\lambda\rangle \psi\\
&\psi(\lambda, 0)=1
\end{split}
\]
which implies that $\psi$ is a zero mean Gaussian. In turn, this implies that $\zeta$ is a zero mean Gaussian process, see the proof of Proposition \ref{lem:unique} for more details.
\end{rem}

\begin{rem}
  It is interesting to notice that equation \eqref{eq:diffusion0} with $\sigma\equiv 0$ is just the equation for the
  evolution of an infinitesimal displacement of the initial condition, that is the linearised equation along an orbit of
  the averaged deterministic system.  This is rather natural, since in the time scale we are considering, the fluctuations around
  the deterministic trajectory are very small.
\end{rem}

\begin{rem}
  Note that the condition that insures that the diffusion coefficient $\sigma$ is non zero can be constructively checked
  by finding periodic orbits with different averages.
\end{rem}

Having stated our goals, let us begin with a first, very simple, result.
\begin{lem}\label{lem:tight} The measures $\{\bP^\ve\}$ are tight.
\end{lem}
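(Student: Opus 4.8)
The plan is to verify the classical tightness criterion for probability measures on $\cC^0([0,T],S)$: since $S=\bT^d$ is compact the only thing to control is the modulus of continuity of the paths. By the Arzel\`a--Ascoli characterization of compact sets in $\cC^0([0,T],S)$ together with the standard tightness lemma (see \cite[Section 1.3]{SV} or Prokhorov's theorem), it suffices to show that for every $\eta>0$
\[
\lim_{\delta\to 0}\ \sup_{\ve}\ \bP^\ve\left(\ \sup_{|s-t|\le\delta}\ |\Theta_\ve(t)-\Theta_\ve(s)|\ >\ \eta\ \right)\ =\ 0 .
\]
Because $\Theta_\ve$ is a polygonalization of the points $\theta_n$ sampled at times $n\ve$, the increment $\Theta_\ve(t)-\Theta_\ve(s)$ over an interval of length $\delta$ is, up to the two endpoint fractions, a telescoping sum $\sum_{j=k}^{k+N-1}(\theta_{j+1}-\theta_j)$ with $N=\lfloor\delta\ve^{-1}\rfloor+1$; and by definition of $F_\ve$ each summand is $\theta_{j+1}-\theta_j=\ve\,\omega(x_j,\theta_j)$. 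Hence $|\Theta_\ve(t)-\Theta_\ve(s)|\le \ve N\,\|\omega\|_{\cC^0}\le (\delta+\ve)\|\omega\|_{\cC^0}$ \emph{pointwise on $X$}, uniformly in $\ve$.

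First I would make this bound precise by splitting $[s,t]$ at the grid points $\ve\lfloor\ve^{-1}s\rfloor$, $\ve\lfloor\ve^{-1}t\rfloor$ and using \eqref{eq:path} to absorb the two linear-interpolation pieces, each of which contributes at most $\ve\|\omega\|_{\cC^0}$ to the increment. This gives
\[
\sup_{|s-t|\le\delta}\ |\Theta_\ve(t)-\Theta_\ve(s)|\ \le\ (\delta+2\ve)\,\|\omega\|_{\cC^0}
\]
on all of $X$, for every $\ve\in(0,\ve_0)$. Then, choosing $\delta$ small (and recalling $\ve<\ve_0$ can be taken as small as we like, or simply bounding $2\ve\le\delta$ for the relevant range), the event in the displayed probability above is empty once $(\delta+2\ve)\|\omega\|_{\cC^0}<\eta$, so the supremum over $\ve$ of that probability is $0$. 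Equicontinuity of the family of paths, uniformly in $\ve$ and in the point of $X$, together with the a priori bound $\Theta_\ve(0)=\theta_0$ and compactness of $S$, then yields that all the $\Theta_\ve$ take values in a fixed compact subset $\cK\subset\cC^0([0,T],S)$; hence $\bP^\ve(\cK)=1$ for all $\ve$ and the family $\{\bP^\ve\}$ is tight.

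There is essentially no analytic obstacle here: the Lipschitz-in-time bound for the polygonal path is deterministic and uniform, because a single step of $F_\ve$ moves $\theta$ by exactly $\ve\,\omega$ with $\omega\in\cC^r\subset\cC^0(X)$ bounded. The only point requiring a little care is bookkeeping the two fractional end-segments in \eqref{eq:path} so that the estimate is genuinely uniform in $\ve$ down to $\ve\to0$; this is what the ``$+2\ve$'' above accounts for. Once that is done, invoking the standard compactness criterion on $\cC^0([0,T],S)$ closes the argument.
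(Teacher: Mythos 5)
Your proof is correct and follows essentially the same route as the paper: a deterministic, uniform-in-$\ve$ Lipschitz-type bound on the polygonal paths coming from $\theta_{j+1}-\theta_j=\ve\,\omega(x_j,\theta_j)$ and $\|\omega\|_{\cC^0}<\infty$, which forces all the measures $\bP^\ve$ to be supported on a single compact subset of $\cC^0([0,T],S)$. The only cosmetic difference is that you phrase the conclusion via the modulus-of-continuity criterion before observing the relevant event is empty, whereas the paper states the uniform Lipschitz bound directly and invokes compactness of the corresponding set of paths.
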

\begin{proof}
By~\eqref{eq:path} it follows that the path $\Theta_\ve$ is made of segments of length $\ve$ and maximal slope $\|\omega\|_{L^\infty}$, thus for all $h>0$,\footnote{ The reader should be aware that we use the notation $\Const$ to designate a
    generic constant (depending only on $f$ and $\omega$) which numerical value can change from one occurrence to the
    next, even in the same line.}
\[
\|\Theta_\ve(t+h)-\Theta_\ve(t)\|\leq \Const h+\ve\sum_{k=\lceil\ve^{-1}t\rceil}^{\lfloor\ve^{-1}(t+h)\rfloor-1}\|\omega(x_k,\theta_k)\|\leq \Const h.
\]
Thus the measures $\bP^\ve$ are all supported on a set of uniformly Lipschitz functions, that is a compact set.
\end{proof}
The above means that there exist converging subsequences $\{\bP^{\ve_j}\}$.
Our next step is to identify the set of accumulation points.

An obstacle that we face immediately is the impossibility of using some typical probabilistic tools. In particular, conditioning with respect to the past and It\=o's formula. In fact, even if the initial condition is random, the dynamics is still deterministic, hence conditioning with respect to the past seems hopeless as it might kill all the randomness at later times.

To solve the first problem it is therefore necessary to devise a systematic way to use the strong dependence on the initial condition (typical of hyperbolic systems) to show that the dynamics, in some sense, {\em forgets the past}. One way of doing this effectively is to use standard pairs, introduced in the next section, whereby slightly enlarging our allowed initial conditions. Exactly how this solves the conditioning problem will be explained in Section \ref{sec:condition}. The lack of It\=o's formula will be overcome by taking the point of view of the Martingale problem to define the solution of a SDE. To explain what this means in the present context is the goal of the present note, but see Appendix \ref{sec:moi} for a brief comment on this issue in the simple case of an SDE. We will come back to the problem of studying the accumulation points of $\{\bP^\ve\}$ after having settled the issue of conditioning.

%%%%%%%%%%%%%%%%%%%%%%%%%%%%%%%%%
\section{Standard Pairs}\label{sec:standard}

Let us fix $\delta>0$ small enough, and $\Dtr>0$ large enough, to be specified later; for $c_1>0$ consider the set of
functions
\begin{align*}
  \Sigma_{c_1}=\{G\in \cC^2([a,b], S) \st &a,b\in\bT^1, b-a\in[\delta/2, \delta],\\&\|G'\|\nc0\leq \ve c_1,\,
  \|G''\|\nc0\leq \ve \Dtr c_1,\}.
\end{align*}
Let us associate to each $G\in \Sigma_{c_1}$ the map $\bG\in \cC^2([a,b], X)$ defined by $\bG(x)=(x,G(x))$ whose image is a
curve --the graph of $G$-- which will be called  a \emph{standard curve}.  For $c_2>0$ large enough, let us define the set of
$c_2$-\emph{standard} probability densities on the standard curve as
\[
D_{c_2}(G)=\left\{\rho\in \cC^1([a,b],\bR_+)\;\bigg|\; \int_a^b\rho(x)d x=1,\
  \left\|\frac{\rho'}{\rho}\right\|_{\cC^0}\leq c_2\right\}.
\]
A \emph{standard pair} $\ell$ is given by $\ell=(\bG,\rho)$ where $G\in\Sigma_{c_1}$ and $\rho\in D_{c_2}(G)$.  Let
$\overline \frkL_\ve$ be the collection of all standard pairs for a given $\ve>0$.  A standard pair
$\ell=(\bG,\rho)$ induces a probability measure $\mu_\ell$ on $X=\bT^{d+1}$ defined as follows: for any continuous function
$g$ on $X$ let
\[
\mu_\ell(g):=\int_a^{b} g(x,G(x))\rho(x) d x.
\]
We define\footnote{ This is not the most general definition of standard family, yet it suffices for our purposes.} a
\emph{standard family} $\stdf=(\cA,\nu, \{\ell_j\}_{j\in\cA})$, where $\cA\subset\bN$ and $\nu$ is a probability measure
on $\cA$; i.e.\ we associate to each standard pair $\ell_j$ a positive weight $\nu(\{j\})$ so that
$\sum_{j\in\cA}\nu(\{j\})=1$.  For the following we will use also the notation $\nu_{\ell_j}=\nu(\{j\})$ for each
$j\in\cA$ and we will write $\ell\in\stdf$ if $\ell=\ell_j$ for some $j\in\cA$. A standard family $\stdf$ naturally
induces a probability measure $\mu_\stdf$ on $X$ defined as follows: for any measurable function $g$ on $X$ let
\[
\mu_\stdf(g):=\sum_{\ell\in\stdf}\nu_{\ell}\mu_\ell(g).
\]
Let us denote by $\sim$ the equivalence relation induced by the above correspondence i.e.\ we let $\stdf\sim\stdf'$ if
and only if $\mu_\stdf=\mu_{\stdf'}$.

\begin{prop}[Invariance]\label{p_invarianceStandardPairs}
  There exist $\delta$ and $\Dtr$ such that, for any $c_1$, $c_2$ sufficiently large, and $\ve$ sufficiently small,
  for any standard family $\stdf$, the measure $F_{\ve*}\mu_{\stdf}$ can be decomposed in standard pairs, i.e.\ there
  exists a standard family $\stdf'$ such that $F_{\ve*}\mu_{\stdf}=\mu_{\stdf'}$.  We say that $\stdf'$ is a
  \emph{standard decomposition of}
  $F_{\ve*}\mu_{\stdf}$.
  \end{prop}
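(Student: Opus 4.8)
The plan is to reduce everything to a single standard pair $\ell = (\bG,\rho)$, since a standard family pushes forward componentwise: $F_{\ve*}\mu_\stdf = \sum_{\ell\in\stdf}\nu_\ell F_{\ve*}\mu_\ell$, so it suffices to exhibit a standard decomposition of each $F_{\ve*}\mu_\ell$ and then reassemble, multiplying the new weights by $\nu_\ell$ and relabelling the index set inside $\bN$. So fix $\ell=(\bG,\rho)$ with $\bG(x)=(x,G(x))$, $x\in[a,b]$. First I would describe the image curve: $F_\ve(\bG(x)) = (f(x,G(x)),\, G(x)+\ve\omega(x,G(x)))$. Because $\partial_x f \ge \lambda > 2$, the map $x\mapsto f(x,G(x))$ is a $\cC^r$ expanding bijection onto its image (modulo wrapping around $\bT^1$); partition $[a,b]$ into intervals $\{I_k\}$ whose images under $x\mapsto f(x,G(x))$ have length in $[\delta/2,\delta]$ — this is possible for $\delta$ fixed small once we note the image has length $\ge \lambda(b-a) \ge \lambda\delta/2 > \delta$, and the number of pieces is bounded by $\Const\delta^{-1}$. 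On each $I_k$ let $y_k : [a_k,b_k]\to I_k$ be the inverse branch, $[a_k,b_k]$ being the image interval, so that the image of $F_\ve\circ\bG$ restricted to $I_k$ is the graph of the function $G_k(y) := G(y_k(y)) + \ve\,\omega(y_k(y),G(y_k(y)))$ over $[a_k,b_k]$. This $G_k$ is the candidate for the new standard curve; the new density is obtained by the usual change of variables along the fibre, $\rho_k(y) = \rho(y_k(y))\,|y_k'(y)| / Z_k$ with $Z_k$ the normalizing constant $\int_{a_k}^{b_k}\rho(y_k(y))|y_k'(y)|\,dy = \int_{I_k}\rho$, and the new weight is $\nu_k = Z_k$ (so $\sum_k \nu_k = 1$). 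A direct check then gives $F_{\ve*}\mu_\ell = \sum_k \nu_k \mu_{(\bG_k,\rho_k)}$ by unfolding the definition of $\mu_\ell$ and applying the change of variables $x = y_k(y)$ branchwise.

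What remains — and this is the crux — is verifying that $(\bG_k,\rho_k)$ is genuinely a standard pair, i.e. that the cone conditions $\|G_k'\|_{\cC^0}\le \ve c_1$, $\|G_k''\|_{\cC^0}\le\ve\Dtr c_1$, $\|\rho_k'/\rho_k\|_{\cC^0}\le c_2$ are reproduced (not merely preserved up to a constant). For the slope: $G_k'(y) = G'(y_k)y_k' + \ve\big(\partial_x\omega\, y_k' + \partial_\theta\omega\,(G'(y_k)y_k' + \ve(\dots))\big)$; since $|y_k'| = 1/|\partial_x f + \partial_\theta f\, G'|\le \lambda^{-1}(1 + O(\ve c_1))$, we get $\|G_k'\|_{\cC^0}\le \lambda^{-1}\ve c_1 + \Const\ve \le \ve c_1$ provided $\lambda^{-1}c_1 + \Const \le c_1$, i.e. $c_1 \ge \Const(1-\lambda^{-1})^{-1}$, which is the "$c_1$ large enough" hypothesis; here we crucially use $\lambda > 1$, and the extra $\partial_x\omega$-type terms carry their own explicit $\ve$, so they are absorbed into the $c_1$ margin. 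The curvature bound for $G_k''$ is the more delicate one: differentiating again produces a term $G''(y_k)(y_k')^2$, contracted by $\lambda^{-2}$, plus terms involving $y_k''$ (which is controlled because $f$ is $\cC^r$, $r\ge 3$, and the slope is $O(\ve c_1)$, giving $|y_k''|\le \Const\ve c_1$), plus $\ve$-explicit $\omega$-terms; collecting, $\|G_k''\|_{\cC^0}\le \lambda^{-2}\ve\Dtr c_1 + \Const\ve c_1 + \Const\ve \le \ve\Dtr c_1$, which holds once $\Dtr$ is chosen so large that $\lambda^{-2}\Dtr + \Const \le \Dtr$ — this is where "$\Dtr$ large enough" (chosen \emph{after} $\delta$ but the dependence is only through $\Const$, so no circularity) enters. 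Finally the density: $\log\rho_k = \log\rho\circ y_k + \log|y_k'| - \log Z_k$, so $(\rho_k'/\rho_k)(y) = (\rho'/\rho)(y_k)\,y_k' + (y_k''/y_k')$; the first term is bounded by $\lambda^{-1}c_2(1+O(\ve c_1))$, the second — the distortion of the expanding branch — by $\Const$ (standard bounded-distortion estimate for $\cC^2$ expanding maps, uniform in $\theta$ and stable under the $\ve$-perturbation), so $\|\rho_k'/\rho_k\|_{\cC^0}\le \lambda^{-1}c_2 + \Const \le c_2$ once $c_2\ge\Const(1-\lambda^{-1})^{-1}$.

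The order I would carry this out: (1) reduce to a single standard pair; (2) construct the partition of $[a,b]$ and check the new domains have length in $[\delta/2,\delta]$ — noting one may need to fix $\delta$ small in terms of $\|\omega\|_{\cC^2}$ and $\lambda$ so that the $\ve$-perturbation of the cutting never pushes a piece outside the window; (3) write down $G_k$, $\rho_k$, $\nu_k$ and verify the pushforward identity by change of variables; (4) verify the three cone conditions, fixing $\Dtr$, then $c_1,c_2$, then shrinking $\ve_0$ last so that all the $O(\ve c_1)$, $O(\ve\Dtr c_1)$ corrections are genuinely negligible against the margins. The main obstacle is step (4), specifically the $G_k''$ estimate: it is the only place where two competing effects must be balanced — the hyperbolic contraction $\lambda^{-2}$ on the inherited curvature versus the curvature \emph{created} by the nonlinearity of $f$ and by $\omega$ — and it forces the particular order of quantifiers ($\delta$, then $\Dtr$, then $c_1,c_2$, then $\ve_0$) announced in the statement. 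Everything else is routine bounded-distortion bookkeeping, uniform in $\theta$ because all estimates on $f(\cdot,\theta)$ are uniform by \eqref{eq:hyperbolicity} and compactness of $S$.
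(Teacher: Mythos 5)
Your proposal is correct and follows essentially the same route as the paper's proof: reduce to a single pair, invert the expanding base map $x\mapsto f(x,G(x))$ branchwise on a partition into $\delta$-sized pieces, define the new curve and density by pullback and change of variables, and close the cone conditions by letting the contraction factors $\lambda^{-1}$, $\lambda^{-2}$ absorb the additive constants, with the same order of quantifiers ($\delta,\Dtr$ first, then $c_1,c_2$ large, then $\ve$ small). The only cosmetic difference is that you partition the domain while the paper partitions the image interval $[f_{\bG}(a),f_{\bG}(b)]$ directly; the estimates are identical.
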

\begin{proof}
  For simplicity, let us assume that $\stdf$ is given by a single standard pair $\ell$; the general case does not require
  any additional ideas and it is left to the reader.  By definition, for any measurable function $g$:
  \begin{align*}
    F_{\ve*}\mu_\ell(g) &= \mu_{\ell}( g\circ F_{\ve})=\\&=
    \int_{a}^{b}%
    g(f(x,G(x)),G(x)+\ve\omega(x,G(x)))\cdot  %
    \rho(x)%
    d x .
  \end{align*}
  It is then natural to introduce the map $f_{\bG}:[a,b]\to \bT^1$ defined by $f_{\bG}(x)=f\circ\bG(x)$.  Note that, by assumption \eqref{eq:hyperbolicity},
  $f_{\bG}'\geq \lambda-\ve c_1\|\partial_\theta f\|\nc0>3/2$ provided that $\ve$ is small enough (depending on how
  large is $c_1$).  Hence all $f_{\bG}$'s are expanding maps, moreover they are invertible if $\delta$ has been chosen
  small enough.  In addition, for any sufficiently smooth function $A$ on $X$, it is trivial to check that, by the
  definition of standard curve, if $\ve$ is small enough (once again depending on $c_1$)\footnote{ Given a function $A$ by $\deh A$ we mean the differential.}
  \begin{subequations}\label{e_AG}
    \begin{align}
      \|(A\circ\bG)'\|\nc0 &\leq \|\deh A\|\nc0 + \ve\|\deh A\|\nc0 c_1\label{e_AG'}\\
      \|(A\circ\bG)''\|\nc0 &\leq 2\|\deh A\|\nc1 + \ve\|\deh A\|\nc0\Dtr c_1 \label{e_AG''}.
    \end{align}
  \end{subequations}
  Then, fix a partition (mod $0$) $[f_{\bG}(a),f_{\bG}(b)]=\bigcup_{j=1}^m[a_{j},b_{j}]$,
  with $b_{j}-a_{j}\in[\delta/2,\delta]$ and $b_{j}=a_{j+1}$; moreover let
  $\vf_{j}(x)=f_{\bG}\invr(x)$ for $x\in[a_{j},b_{j}]$ and define
  \begin{align*}
    G_{j}(x) &= G\circ\vf_{j}(x)+\ve\omega(\vf_{j}(x),G\circ\vf_{j}(x));\\
    \tilde\rho_{j}(x) &= \rho\circ\vf_{j}(x)\vf_{j}'(x).
  \end{align*}
  By a change of variables we can thus write:
  \begin{equation}\label{e_induction0}
    F_{\ve*}\mu_\ell(g) =\sum_{j=1}^m\int_{a_{j}}^{b_{j}} \tilde\rho_j(x) g(x,G_j(x))d x.
  \end{equation}
  Observe that, by immediate differentiation we obtain, for $\vf_j$:
  \begin{align}\label{e_estimatesVf}
    \vf'_j   &= \frac1{f_\bG'}\circ\vf_j &%
    \vf''_j  &= -\frac{f_\bG''}{f_\bG'^3}\circ\vf_j .%
  \end{align}
  Let $\omega_{\bG}=\omega\circ \bG$ and $\bar G=G+\ve\omega_{\bG}$. Differentiating the definitions of $G_j$ and
  $\tilde\rho_j$ and using~\eqref{e_estimatesVf} yields
  \begin{align}
    G_j' &=%
    \frac{\bar G'}{f'_\bG}\circ\vf_j\label{e_C1}&%
    G_j'' &=%
    \frac{\bar G''}{f'^2_\bG}\circ\vf_j%
    -G_j'\cdot\frac{f_\bG''}{f_\bG'^2}\circ\vf_j%\label{e_c1''}%
  \end{align}
  and similarly
  \begin{align}\label{e_C2}
    \frac{\tilde\rho_j'}{\tilde\rho_j}&=\frac{\rho'}{\rho\cdot f'_\bG}\circ\vf_j-\frac{f_\bG''}{f_\bG'^2}\circ\vf_j.
  \end{align}
  Using the above equations it is possible to conclude our proof: first of all, using~\eqref{e_C1}, the definition of
  $\bar G$ and equations~\eqref{e_AG} we obtain, for small enough $\ve$:
  \begin{align*}
    \|G'_j\| &\leq \left\|\frac{ G' + \ve\omega_\bG'}{f_\bG'}\right\|
    \leq\frac23(1+\Const\ve)\ve c_1 + \Const\ve \leq \\%
    &\leq\frac34\ve c_1 + \Const\ve\leq\ve c_1,\\
    \intertext{provided that $c_1$ is large enough; then:}
    \|G''_j\| &\leq \left\|\frac{ G'' + \ve\omega_\bG''}{f_\bG'^2}\right\|+\Const(1+\ve\Dtr c_1)\ve c_1 \leq \\
    &\leq \frac34\ve \Dtr c_1 + \ve \Const  c_1 + \ve \Const  \leq \ve \Dtr c_1
  \end{align*}
provided $c_1$ and $\Dtr$ are sufficiently large. Likewise, using~\eqref{e_AG}
  together with~\eqref{e_C2} we obtain
  \begin{align*}
    \left\|\frac{\tilde\rho'_j}{\tilde\rho_j}\right\|&\leq \frac23 c_2 +  \Const(1+\Dtr c_1) \leq  c_2,
  \end{align*}
  provided that $c_2$ is large enough.  This concludes our proof: it suffices to define the family $\stdf'$ given by
  $(\cA,\nu, \{\ell_j\}_{j\in\cA})$, where $A=\{1,\dots,m\}$, $\nu(\{j\})=\int_{a_j}^{b_j}\tilde\rho_j$,
  $\rho_j=\nu(\{j\})^{-1}\tilde\rho_j$ and $\ell_j=(\bG_j,\rho_j)$.  Our previous estimates imply that
  $(\bG_{j},\rho_{j})$ are standard pairs; note moreover that~\eqref{e_induction0} implies
  $\sum_{\tell\in\stdf'}\nu_\tell=1$, thus $\stdf'$ is a standard family.  Then we can rewrite~\eqref{e_induction0} as
  follows:
  \[
  F_{\ve *}\mu_\ell(g)=%
  \sum_{\tell\in\stdf'}\nu_\tell\mu_{\tell}(g)=%
  \mu_{\stdf'}(g).\qedhere
  \]
\end{proof}
\begin{rem}\label{r_randomVariables}
  Given a standard pair $\ell=(\bG,\rho)$, we will interpret $(x_k,\theta_k)$ as random
  variables defined as $(x_k,\theta_k)=F_\ve^k(x, G(x))$, where $x$ is distributed
  according to $\rho$.
\end{rem}

%%%%%%%%%%%%%%%%%%%%%%%%%%

\section{Conditioning}\label{sec:condition}
In probability, conditioning is one of the most basic techniques and one would like to use it freely when dealing with
random variables.  Yet, as already mentioned, conditioning seems unnatural when dealing with deterministic systems.  The
use of standard pairs provides a very efficient solution to this conundrum. The basic idea is that one can apply
repeatedly Proposition \ref{p_invarianceStandardPairs} to obtain at each time a family of standard pairs and then
``condition" by specifying to which standard pair the random variable belongs at a given time.\footnote{ Note that the
  set of standard pairs does not form a $\sigma$-algebra, so to turn the above into a precise statement would be a bit
  cumbersome. We thus prefer to follow a slightly different strategy, although the substance is unchanged.}

Note that if $\ell$ is a standard pair with $G'=0$, then it belongs to $\overline \frkL_\ve$ for all $\ve>0$.  In the
following, abusing notations, we will use $\ell$ also to designate a family $\{\ell_\ve\}$, $\ell_\ve\in \overline
\frkL_\ve$ that weakly converges to a standard pair $\ell\in\bigcap_{\ve>0}\overline \frkL_\ve$.  For every standard
pair $\ell$ we let $\bP_\ell^\ve$ be the induced measure in path space and $\bE_\ell^\ve$ the associated expectation.

Before continuing, let us recall and state a bit of notation: for each $t\in [0,T]$ recall that the random variable $\Theta(t)\in
\cC^0(\cC^0([0,T],S),S)$ is defined by $\Theta(t,\vartheta)=\vartheta(t)$, for all $\vartheta\in \cC^0([0,T],S)$. Also we will need the
filtration of $\sigma$-algebras $\cF_t$ defined as the smallest $\sigma$-algebra for which all the functions
$\{\Theta(s)\;:\; s\leq t\}$ are measurable.  Last, we consider the shift $\tau_s:\cC^0([0,T],S)\to \cC^0([0,T-s],S)$
defined by $\tau_s(\vartheta)(t)=\vartheta(t+s)$.
Note that $\Theta(t)\circ \tau_s=\Theta(t+s)$. Also, it is helpful to keep in mind that, for all $A\in\cC^0(S,\bR)$, we have\footnote{ To be really precise, maybe one should write, e.g.,  $\bE_\ell^\ve(A\circ \Theta(t+k\ve))$, but we conform to the above more intuitive notation.}
\[
\bE_\ell^\ve(A(\Theta(t+k\ve)))=\mu_\ell(A(\Theta_\ve(t+k\ve)))=\mu_\ell(A(\Theta_\ve(t)\circ F_\ve^{k})).
\]
Our goal is to compute, in some reasonable way, expectations of $\Theta(t+s)$ conditioned to $\cF_t$, notwithstanding
the above mentioned problems due to the fact that the dynamics is deterministic. Obviously, we can hope to obtain a
result only in the limit $\ve\to 0$. Note that we can always reduce to the case in which the conditional expectation is
zero by subtracting an appropriate function, thus it suffices to analyze such a case.

The basic fact that we will use is the following.

\begin{lem}\label{lem:condition} Let $t'\in [0,T]$ and $\cA$ be a continuous bounded random variable on $\cC^0([0,t'],S)$ with values in $\bR$.
  If we have
\[
\lim_{\ve\to 0}\sup_{\ell\in\overline \frkL_\ve}\left| \bE_\ell^\ve(\cA)\right|=0,
\]
then, for each $s\in [0,T-t']$, standard pair $\ell$, uniformly bounded continuous functions $\{B_{i}\}_{i=1}^m$,
$B_i:S\to\bR$ and times $\{t_1<\cdots<t_m\}\subset [0,s)$,
\[
\lim_{\ve\to 0}\bE_\ell^\ve\left(\prod_{i=1}^m B_{i}(\Theta(t_i))\cdot \cA\circ \tau_s\right)=0.
\]
\end{lem}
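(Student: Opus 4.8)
The plan is to reduce the conditioned expectation to the unconditioned-sup hypothesis by the standard-pair decomposition. First I would use the evolution machinery: the random variable $\prod_{i=1}^m B_i(\Theta(t_i))\cdot\cA\circ\tau_s$ depends on the path only through times $\le s$ for the factor $\prod B_i$ (since $t_i<s$) and through times in $[s,s+t']$ for $\cA\circ\tau_s$. The key observation is that $\cA\circ\tau_s$ is, after applying $F_\ve^{\lfloor\ve^{-1}s\rfloor}$, exactly the random variable $\cA$ read off from the future path starting at time $s$. So the idea is to push the measure $\mu_\ell$ forward by $F_\ve^{k}$ with $k=\lfloor\ve^{-1}s\rfloor$, apply Proposition \ref{p_invarianceStandardPairs} iteratively to write $F_{\ve*}^k\mu_\ell=\mu_{\stdf_k}$ for a standard family $\stdf_k=(\cA_k,\nu_k,\{\ell_j\}_{j\in\cA_k})$, and then observe that $\bE_\ell^\ve(\cA\circ\tau_s)=\sum_{j\in\cA_k}\nu_k(\{j\})\bE_{\ell_j}^\ve(\cA)$ (up to a negligible time-discretization error of size $O(\ve)$ in replacing $\ve^{-1}s$ by its integer part, which is harmless by continuity and boundedness of $\cA$).

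The complication is that $\prod_{i=1}^m B_i(\Theta(t_i))$ is not constant on the pieces of $\stdf_k$: it is a function of the intermediate path. The way I would handle this is to note that $\prod_i B_i(\Theta(t_i))$ is $\cF_s$-measurable, and each standard pair $\ell_j$ appearing in $\stdf_k$ carries, by construction in the proof of Proposition \ref{p_invarianceStandardPairs}, a \emph{definite itinerary}: the decomposition is obtained by iterating, and at each step one chooses a branch of the inverse and a sub-partition. Hence on the set of initial conditions $x$ landing in $\ell_j$ the values $\theta_{\lfloor\ve^{-1}t_i\rfloor}(x)$, and therefore $\Theta_\ve(t_i)(x)$, vary only by $O(\ve\cdot\text{length})=O(\ve)$ across the pair (because the slow variable is Lipschitz in $x$ along the standard curve with constant $O(\ve)$, cf.\ the estimate $\|G_j'\|\le \ve c_1$, and the path is built from such increments). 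So $\prod_i B_i(\Theta(t_i))$ is, on each $\ell_j$, equal to a constant $\beta_j$ up to an error $o(1)$ uniformly in $j$, using uniform continuity of the $B_i$. Therefore
\[
\bE_\ell^\ve\left(\prod_{i=1}^m B_i(\Theta(t_i))\cdot\cA\circ\tau_s\right)
=\sum_{j\in\cA_k}\nu_k(\{j\})\,\beta_j\,\bE_{\ell_j}^\ve(\cA)+o(1),
\]
and bounding $|\beta_j|\le\prod_i\|B_i\|_\infty=:C$, $\sum_j\nu_k(\{j\})=1$, we get
\[
\left|\bE_\ell^\ve\left(\prod_{i=1}^m B_i(\Theta(t_i))\cdot\cA\circ\tau_s\right)\right|
\le C\sup_{\ell'\in\overline\frkL_\ve}\left|\bE_{\ell'}^\ve(\cA)\right|+o(1),
\]
which tends to $0$ by hypothesis. (If one worries that $t'$ could exceed $T-s$ so that $\tau_s$ is only defined on $\cC^0([0,T-s],S)$, note the statement restricts to $s\in[0,T-t']$, so $\cA\circ\tau_s$ makes sense.)

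The main obstacle, and the only point that needs care, is precisely the claim that $\prod_i B_i(\Theta(t_i))$ is essentially constant on each standard pair of $\stdf_k$. This is where one genuinely uses the hyperbolicity/contraction-of-preimages: at step $k$ the standard curve of $\ell_j$ is the image of a shrinking cylinder under $F_\ve^k$, so two initial points in the same $\ell_j$ had, at every earlier time $n\le k$, slow coordinates that differ by at most the current length of the (expanding) projection times $\ve\|\partial_x\omega\|$-type factors, summed geometrically — hence $O(\ve)$ total, uniformly. I would spell this out by tracking, along the inductive construction in Proposition \ref{p_invarianceStandardPairs}, the diameter in the $x$-variable of the preimage pieces (which shrinks by $\ge\lambda^{-1}<1/2$ at each step) and using $\|G_j'\|\le\ve c_1$ to convert $x$-diameter into $\theta$-oscillation. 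Everything else — the time-discretization rounding, the passage from a single $\ell$ to a general standard family, the uniform continuity of the $B_i$ and boundedness of $\cA$ — is routine. A secondary, purely bookkeeping point is that $\cA$ is defined on $\cC^0([0,t'],S)$ while the $\ell_j$ live at time $s$; one simply identifies $\bE_{\ell_j}^\ve(\cA)$ with the expectation of $\cA$ read from the forward path of $\ell_j$, exactly the object the hypothesis controls, so the hypothesis applies verbatim with $\ell'=\ell_j$.
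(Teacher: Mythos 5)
Your proposal is correct and follows essentially the same route as the paper: iterate Proposition \ref{p_invarianceStandardPairs} up to time $s$, use that the $\theta$-oscillation over a standard pair is $O(\ve)$ (via $\|G'\|\leq \ve c_1$ and the refinement structure of the decomposition) to freeze the factors $B_i(\Theta(t_i))$, and then bound the inner expectation by the uniform hypothesis applied to the pairs of the time-$s$ family. The only cosmetic difference is that the paper organizes the decomposition as nested sums at each intermediate time $t_1,\dots,t_m,s$, replacing $B_i(\Theta_\ve(t_i))$ by $B_i(\theta^*_{\ell_i})$, whereas you decompose once at time $s$ and argue constancy of the product on each final piece; these are the same tree written in two ways.
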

\begin{proof}
  The quantity we want to study can be written as
\[
\mu_\ell\left(\prod_{i=1}^m B_{i}(\Theta_\ve(t_i))\cdot \cA(\tau_s(\Theta_\ve))\right).
\]
To simplify our notation, let $k_i=\lfloor t_i\ve^{-1}\rfloor$ and $k_{m+1}=\lfloor s\ve^{-1}\rfloor$.  Also, for every
standard pair $\tilde \ell $, let $\frkL_{i,\tilde \ell}$ denote an arbitrary standard decomposition of
$(F_\ve^{k_{i+1}-k_i})_*\mu_{\tilde\ell}$ and define $\theta_\ell^*=\mu_\ell(\theta)=\int_{a_\ell}^{b_\ell}\rho_\ell(x)
G_\ell(x)dx$.  Then, by Proposition \ref{p_invarianceStandardPairs},
\[
\begin{split}
&\mu_\ell\left(\prod_{i=1}^m B_{i}(\Theta_\ve(t_i))\cdot \cA(\tau_s(\Theta_\ve))\right)
=\mu_\ell\left(\prod_{i=1}^m B_{i}(\Theta_\ve(t_i))\cdot \cA(\tau_{s-\ve k_{m+1}}(\Theta_\ve\circ F_\ve^{k_{m+1}}))\right)\\
&=\sum_{\ell_1\in\frkL_{1,\ell}}\cdots \sum_{\ell_{m+1}\in\frkL_{m,\ell_{m}}}\left[\prod_{i=1}^m\nu_{\ell_i} B_{i}(\theta_{\ell_i}^*)\right]\nu_{m+1}\mu_{\ell_{m+1}}(\cA(\Theta_\ve))+o(1)\\
&=\sum_{\ell_1\in\frkL_{1,\ell}}\cdots \sum_{\ell_{m+1}\in\frkL_{m,\ell_{m}}}\left[\prod_{i=1}^m\nu_{\ell_i} B_{i}(\theta_{\ell_i}^*)\right]\nu_{m+1}\bE^\ve_{\ell_{m+1}}(\cA)+o(1)
\end{split}
\]
where $\lim_{\ve\to 0}o(1)=0$.  The lemma readily follows.
\end{proof}
Lemma \ref{lem:condition} implies that, calling $\bP$ an accumulation point of $\bP_\ell^\ve$, we have\footnote{ By $\bE$ we mean the expectation with respect to $\bP$.}
\begin{equation}\label{eq:cond-2}
\bE \left(\prod_{i=1}^m B_i(\Theta(t_i))\cdot \cA\circ\tau_s\right)=0.
\end{equation}
This solves the conditioning problems thanks to the following
\begin{lem}\label{lem:limit-cond}
Property \eqref{eq:cond-2} is equivalent to
\[
\bE \left( \cA\circ \tau_s\;|\;\cF_s\right)=0,
\]
for all $s<t$.
\end{lem}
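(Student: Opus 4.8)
The plan is to unwind the definition of conditional expectation. Recall that $\bE(\cA\circ\tau_s \mid \cF_s)$ is, by definition, the (a.s.\ unique) $\cF_s$-measurable random variable $Y$ such that $\bE(Y\cdot Z) = \bE(\cA\circ\tau_s \cdot Z)$ for every bounded $\cF_s$-measurable $Z$. So the statement ``$\bE(\cA\circ\tau_s \mid \cF_s) = 0$'' is literally equivalent to ``$\bE(\cA\circ\tau_s\cdot Z) = 0$ for all bounded $\cF_s$-measurable $Z$''. Thus the content of the lemma is the assertion that it suffices to test against the special family of $Z$'s of the form $\prod_{i=1}^m B_i(\Theta(t_i))$ with $t_i < s$ appearing in \eqref{eq:cond-2}, rather than against all bounded $\cF_s$-measurable functions.

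The forward implication is immediate: if $\bE(\cA\circ\tau_s\mid\cF_s)=0$, then since each $\prod_{i=1}^m B_i(\Theta(t_i))$ with $t_i<s$ is a bounded $\cF_s$-measurable function, \eqref{eq:cond-2} follows from the defining property of conditional expectation (taking the expectation of the product and using the tower property). For the reverse implication, I would argue as follows. First observe that the functions $\prod_{i=1}^m B_i(\Theta(t_i))$ with $\{t_i\}\subset[0,s)$ and $B_i\in\cC^0(S,\bR)$ form a multiplicatively closed family that separates points of $\cC^0([0,s],S)$ (after also allowing $t_i$ up to $s$, or by a density/continuity argument pushing $t_i\uparrow s$, using continuity of paths). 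A standard monotone-class / functional-monotone-class argument then shows that finite linear combinations of such products are dense, say in $L^1(\bP)$ or uniformly on the (compact, by tightness — Lemma \ref{lem:tight}) support of $\bP$, among all bounded $\cF_s$-measurable functions; here one uses that $\cF_s$ is by construction generated by $\{\Theta(t):t\le s\}$. Given an arbitrary bounded $\cF_s$-measurable $Z$, approximate it by such linear combinations $Z_n$; then $\bE(\cA\circ\tau_s\cdot Z_n)=0$ by linearity from \eqref{eq:cond-2}, and passing to the limit (dominated convergence, using that $\cA$ is bounded) gives $\bE(\cA\circ\tau_s\cdot Z)=0$. Since $Z$ was arbitrary, $\bE(\cA\circ\tau_s\mid\cF_s)=0$.

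The only genuinely delicate point is the approximation step — specifically, verifying that the multiplicative family $\{\prod B_i(\Theta(t_i))\}$ really does generate $\cF_s$ in the sense needed (a functional monotone class theorem applies because the family is closed under multiplication and contains the constants, and the generated $\sigma$-algebra is exactly $\cF_s$). One subtlety is the open interval: \eqref{eq:cond-2} is stated with $t_i\in[0,s)$ whereas $\cF_s$ includes $\Theta(s)$ itself, but since paths in the support of $\bP$ are (Lipschitz, hence) continuous, $B(\Theta(s)) = \lim_{t\uparrow s} B(\Theta(t))$ pointwise and boundedly, so $\Theta(s)$ is measurable with respect to the $\sigma$-algebra generated by $\{\Theta(t):t<s\}$ up to $\bP$-null sets, and nothing is lost. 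I expect this bookkeeping — making the monotone class argument and the $t_i\uparrow s$ passage precise — to be the main (though entirely routine) obstacle; the probabilistic heart of the matter is just the definition of conditional expectation.
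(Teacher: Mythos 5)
Your argument is correct and rests on the same fact the paper's proof uses: that $\cF_s$ is generated by the cylinder functions $\prod_{i} B_i(\Theta(t_i))$ with $t_i<s$ (the endpoint $s$ being recoverable by path continuity), so that testing against this multiplicative family determines the conditional expectation. The paper packages the extension step as a proof by contradiction --- approximating the indicator of a set $\bigcap_i\{\vartheta(t_i)\in K_i\}$ on which the conditional expectation would have a definite sign by continuous functions $B_{\delta,i}$ --- whereas you invoke the functional monotone class theorem directly; the two routes are interchangeable.
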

\begin{proof}
Note that the statement of the Lemma immediately implies \eqref{eq:cond-2}, we thus worry only about the other direction.
If the lemma were not true then there would exist a positive measure set of the form
\[
\cK=\bigcap_{i=0}^\infty \{\vartheta(t_i)\in K_i\},
\]
where the $\{K_i\}$ is a collection of compact sets in $S$, and $t_i<s$, on which the conditional expectation is strictly positive (or strictly negative, which can be treated in exactly the same way).  For some arbitrary $\delta>0$, consider open sets $U_i\supset K_i$ be such that $\bP(\{\vartheta(t_i)\in U_i\setminus K_i\})\leq \delta 2^{-i}$.  Also, let $B_{\delta,i}$ be a continuous function such that $B_{\delta,i}(\vartheta)=1$ for $\vartheta\in K_i$ and $B_{\delta,i}(\vartheta)=0$ for $\vartheta\not\in U_i$.  Then
\[
0<\bE(\Id_{\cK} \cA\circ\tau_s)=\lim_{n\to \infty}\bE\left(\prod_{i=1}^n B_{\delta,i}(\Theta(t_i))\cdot \cA\circ\tau_s\right)+C_\#\delta=C_\#\delta
\]
which yields a contradiction by the arbitrariness of $\delta$.
\end{proof}
In other words, we have recovered the possibility of conditioning with respect to the past {\em after} the limit $\ve\to 0$.

\section{Averaging (the Law of Large Numbers)}\label{sec:average}
We are now ready to provide the proof of Theorem~\ref{thm:average}. The proof consists of several steps; we first
illustrate the global strategy while momentarily postponing the proof of the single steps.

\begin{proof}[{\bf Proof of Theorem \ref{thm:average}}]
  As already mentioned we will prove the theorem for a larger class of initial conditions: any initial condition
  determined by a standard pair. Note that for flat standard pairs $\ell$, i.e. $G_\ell(x)=\theta$, we have the class of
  initial condition assumed in the statement of the Theorem. Given a standard pair $\ell$ let $\{\bP_\ell^\ve\}$ be the
  associate measures in path space (the latter measures being determined, as explained at the beginning of Section
  \ref{sec:prelim}, by the standard pair $\ell$ and~\eqref{eq:path}).  We have already seen in Lemma
  \ref{lem:tight} that the set $\{\bP_\ell^\ve\}$ is tight.

  Next we will prove in Lemma \ref{lem:average} that, for each $A\in\cC^2(S,\bR)$, we have
\begin{equation}\label{eq:average-mart}
\lim_{\ve\to 0}\sup_{\ell\in\overline\frkL_\ve}\left|\bE_\ell^\ve\left(A(\Theta(t))-A(\Theta(0))-\int_0^t\langle\overline\omega(\Theta(\tau)),\nabla A(\Theta(\tau))\rangle d\tau\right)\right|=0.
\end{equation}
Accordingly, it is natural to consider the random variables $\cA(t)$ defined by
\[
\cA(t,\vartheta)=A(\vartheta(t))-A(\vartheta(0))-\int_0^t\langle\overline\omega(\vartheta(\tau)),\nabla A(\vartheta(\tau))\rangle d\tau,
\]
for each $t\in[0,T]$ and $\vartheta\in\cC^0([0,T],S)$, and the first order differential operator
\[
\cL A=\langle\overline\omega,\nabla A\rangle.
\]
Then equation \eqref{eq:average-mart}, together with Lemmata \ref{lem:condition} and \ref{lem:limit-cond}, means that each accumulation point $\bP_\ell$ of $\{\bP_\ell^\ve\}$ satisfies, for all $s\in[0,T]$ and $t\in [0,T-s]$,
\begin{equation}\label{eq:martingale}
\bE_\ell\left(\cA\circ \tau_s\;|\;\cF_s\right)=
\bE_\ell\left(A(\Theta(t+s))-A(\Theta(s))-\int_s^{t+s}\hskip-6pt\cL A(\Theta(\tau)) d\tau\;\bigg|\;\cF_s\right)=0
\end{equation}
this is the simplest possible version of the {\em Martingale Problem}.  Indeed it implies that, for all $\theta, A$ and standard pair $\ell$ such that $G_\ell(x)=\theta$,
\[
M(t)=A(\Theta(t))-A(\Theta(0))-\int_0^t\cL A(\Theta(s)) ds
\]
is a martingale with respect to the measure $\bP_\theta$ and the filtration $\cF_t$ (i.e., for each $0\leq s\leq t\leq T$, $\bE_\theta(M(t)\st\cF_s)=M(s)$).\footnote{ We use $\bP_\theta$ to designate any measure $\bP_\ell$ with $G_\ell(x)=\theta$.}  Finally we will show in
Lemma~\ref{lem:martingale} that there is a unique measure that has such a property: the measure supported on the unique
solution of equation \eqref{eq:average0}. This concludes the proof of the theorem.
\end{proof}
In the rest of this section we provide the missing proofs.

\subsection{Differentiating with respect to time}\

\noindent Let us start with the proof of \eqref{eq:average-mart}.
\begin{lem}\label{lem:average} For each $A\in\cC^2(S,\bR)$ we have
\[
\lim_{\ve\to 0}\sup_{\ell\in\overline\frkL_\ve}\left|\bE_\ell^\ve\left(A(\Theta(t))-A(\Theta(0))-\int_0^t\langle\overline\omega(\Theta(s)),\nabla A(\Theta(s))\rangle ds\right)\right|=0,
\]
where (we recall) $\overline\omega(\theta)=m_{\theta}(\omega(\cdot,\theta))$ and $m_\theta$ is the unique SRB measure of $f(\cdot,\theta)$.
\end{lem}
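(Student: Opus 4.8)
The plan is to reduce the continuous-time statement to a telescoping sum over the discrete steps of the dynamics and then control each increment using the decay of correlations listed in Section \ref{sec:unperturbed}. Fix $A\in\cC^2(S,\bR)$ and a standard pair $\ell$. By Remark \ref{r_randomVariables} we must estimate $\mu_\ell\big(A(\Theta_\ve(t))-A(\Theta_\ve(0))-\int_0^t\langle\overline\omega(\Theta_\ve(s)),\nabla A(\Theta_\ve(s))\rangle\,ds\big)$ uniformly in $\ell\in\overline\frkL_\ve$. Writing $N=\lfloor\ve^{-1}t\rfloor$ and telescoping, $A(\theta_N)-A(\theta_0)=\sum_{k=0}^{N-1}\big(A(\theta_{k+1})-A(\theta_k)\big)$, and since $\theta_{k+1}-\theta_k=\ve\omega(x_k,\theta_k)$, a Taylor expansion of $A\in\cC^2$ gives $A(\theta_{k+1})-A(\theta_k)=\ve\langle\nabla A(\theta_k),\omega(x_k,\theta_k)\rangle+\cO(\ve^2\|\omega\|^2\|A\|_{\cC^2})$. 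Summing the error over $N\sim\ve^{-1}t$ steps contributes $\cO(\ve)$, which vanishes; the integral term in \eqref{eq:average-mart}, being a Riemann sum of a Lipschitz integrand with mesh $\ve$, likewise equals $\ve\sum_{k=0}^{N-1}\langle\nabla A(\theta_k),\overline\omega(\theta_k)\rangle+\cO(\ve)$. So up to $\cO(\ve)$ the whole bracket equals
\[
\ve\sum_{k=0}^{N-1}\mu_\ell\Big(\big\langle\nabla A(\theta_k),\,\omega(x_k,\theta_k)-\overline\omega(\theta_k)\big\rangle\Big)
=\ve\sum_{k=0}^{N-1}\mu_\ell\big(\langle\nabla A(\theta_k),\hat\omega(x_k,\theta_k)\rangle\big),
\]
with $\hat\omega=\omega-\overline\omega$, and it remains to show each summand is $o(\ve)$, uniformly in $\ell$, summably.

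The key point is that $\hat\omega$ has zero average with respect to $m_{\theta}$ for every fixed $\theta$, so each term is a decay-of-correlations quantity in disguise. I would exploit that along a standard pair the dynamics in the fast variable is, to leading order in $\ve$, a fixed expanding map: over a short block of $\ell^*\ll\ve^{-1}$ steps the slow variable moves by at most $\cO(\ell^*\ve)$, so $\theta_k$ is essentially frozen at some $\theta\approx\theta_\ell^*$ and $f(\cdot,\theta_j)$ is essentially $f(\cdot,\theta)$. The precise mechanism is the invariance of standard pairs (Proposition \ref{p_invarianceStandardPairs}): push $\mu_\ell$ forward $k$ times, decompose into standard pairs $\ell'$, and on each $\ell'$ the density is a $c_2$-standard density on a nearly-flat curve, hence comparable to a smooth density on $\bT^1$; applying the decay-of-correlations estimate from item (3) of Section \ref{sec:unperturbed} with $g=\hat\omega(\cdot,\theta)$ and the standard density as $h$, together with $m_\theta(\hat\omega(\cdot,\theta))=0$, shows $\mu_{\ell'}(\hat\omega(\cdot,\theta_{\ell'}^*)) $ is small once a little extra time has elapsed. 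More carefully, one writes $x_k=f_\theta^{k-k_0}\circ(\text{something on }\ell)$ for a block starting at time $k_0$, uses the second correlation bound in (3), and sums: grouping the $N$ steps into $\cO(N/\ell^*)$ blocks of length $\ell^*$, the contribution of each block after its first $\cO(1)$ "burn-in" steps is exponentially small in the elapsed time, so $\ve\sum_k |\mu_\ell(\langle\nabla A(\theta_k),\hat\omega\rangle)| = \cO(\ve\,\ell^*\|A\|_{\cC^1}) + \cO(\ve^{-1}\cdot\ell^*\cdot e^{-\alpha\ell^*})$ — wait, more simply, one bounds $|\mu_\ell(\langle\nabla A(\theta_k),\hat\omega(x_k,\cdot)\rangle)|\le C_0 e^{-\alpha k}\|A\|_{\cC^1} + (\text{drift error from }\theta_k\not\equiv\theta_\ell^*)$, and the drift error per step is $\cO(k\ve)$, giving a total $\ve\sum_{k=0}^{N-1}(C_0 e^{-\alpha k}\|A\|_{\cC^1} + \Const k\ve) = \cO(\ve) + \cO(N^2\ve^3)=\cO(\ve)$ since $N\ve\le t$. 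This is uniform in $\ell$ because all the constants depend only on $f,\omega,A$ and the standard-pair parameters $c_1,c_2,\Dtr$.

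The main obstacle is the bookkeeping needed to control the accumulated motion of the slow variable over the full time $\ve^{-1}t$ while still being able to invoke the \emph{fixed}-$\theta$ decay of correlations at each step: one cannot simply replace $f(\cdot,\theta_j)$ by $f(\cdot,\theta)$ over all of $[0,\ve^{-1}t]$, only over short sub-blocks, and one must check that the differentiability of $\theta\mapsto m_\theta$ (item (2)) makes $\overline\omega$ Lipschitz so that the substitution errors are genuinely $\cO(k\ve)$ per step rather than $\cO(1)$. A secondary technical point is that the densities produced by iterating Proposition \ref{p_invarianceStandardPairs} must be shown to have $\cB_1$-norms bounded uniformly (this is exactly why the standard-pair class is closed under the dynamics with $\ve$-independent constants), so that the correlation bound can be applied with a constant not depending on $k$ or $\ell$. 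Once these two estimates are in hand, the sum over $k$ telescopes against the exponential decay and the proof is complete.
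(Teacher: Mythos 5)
Your overall architecture is the same as the paper's (telescoping in $k$, reducing to $\ve\sum_k\mu_\ell(\langle\nabla A(\theta_k),\hat\omega(x_k,\theta_k)\rangle)$, re-decomposing into standard pairs via Proposition \ref{p_invarianceStandardPairs}, freezing the slow variable, and invoking decay of correlations for the frozen map), but the final estimate you actually display does not close. The term $\ve\sum_{k=0}^{N-1}\Const k\ve$ equals $\Const\ve^2N^2\approx\Const t^2$, which is $\cO(1)$, not $\cO(\ve)$; the intermediate expression $\cO(N^2\ve^3)$ is an arithmetic slip (there is no third power of $\ve$ available). This is not a cosmetic error: it reflects the fact that you cannot freeze $\theta$ at $\theta_\ell^*$, and run the decay-of-correlations clock, from time $0$ over the whole window $[0,\ve^{-1}t]$. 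The comparison between the true orbit and the frozen one is only valid over blocks of length $n\ll\ve^{-1/2}$, and one must restart the frozen reference (and the standard-pair decomposition) at the beginning of each block. The paper does this with blocks of length $n=\lceil\ve^{-1/3}\rceil$, obtaining a per-block error $\cO(n^3\ve^2+\ve)$ and a total error $\cO(t n^2\ve+t/n)=\cO(t\ve^{1/3})$: the burn-in cost $t/n$ forces $n\to\infty$ while the drift/distortion cost $tn^2\ve$ forces $n\ll\ve^{-1/2}$, and a choice in between is essential. You gesture at exactly this block structure ("only over short sub-blocks") but then abandon it for the "more simply" computation, which is the one that fails.

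The second gap is hidden in your phrase "$x_k=f_\theta^{k-k_0}\circ(\text{something on }\ell)$". The true orbit is a composition of $k$ \emph{different} expanding maps, and forward orbits of $F_\ve$ and of the frozen map $f_{\theta^*}$ separate exponentially, so one cannot directly substitute one for the other inside $\hat\omega$. The paper handles this with the shadowing construction of Appendix \ref{sec:shadow}: an implicit-function-theorem change of variables $Y_n$ matching the orbits at the \emph{end} of the block (Lemma \ref{lem:shadow-q} gives $|x_k-x_k^*|\leq\Const\ve k$ by backward induction), together with the Jacobian bound $|Y_n'-1|\leq\Const\ve n^2$ of Lemma \ref{lem:der-bounds-geo}. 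Even then the transported density $\tilde\rho_n=[\chi_{[a,b]}\rho/Y_n']\circ Y_n^{-1}$ may have enormous BV norm, and must be replaced by $\bar\rho=(\chi_{[a,b]}\rho)\circ Y_n^{-1}$ at a cost $\cO(\ve n^2)$ before the correlation bound can be applied with a uniform constant. These two estimates are precisely where the restriction $n\lesssim\ve^{-1/2}$ and the $n^3\ve^2$ error term come from, and they are absent from your argument. With the block length fixed at an intermediate scale and the shadowing/Jacobian estimates supplied, your outline becomes the paper's proof.
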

\begin{proof}
We will use the notation of Appendix \ref{sec:shadow}. Given a standard pair $\ell$ let $\rho_\ell=\rho$, $\theta^*_\ell=\mu_\ell(\theta)$ and $f_*(x)=f(x,\theta^*_\ell)$. Then, by Lemmata \ref{lem:shadow-q} and \ref{lem:der-bounds-geo}, we can write, for $n\leq C\ve^{-\frac 12}$,\footnote{ By $\cO(\ve^a n^b)$ we mean a quantity bounded by $\Const \ve^a n^b$, where $\Const$ does not depend on $\ell$.}
\[
\begin{split}
&\mu_\ell\left(A(\theta_n)\right)=\int_a^b\rho(x) A\left(\theta_0+\ve\sum_{k=0}^{n-1}\omega(x_k,\theta_k)\right) dx\\
&=\int_{a}^b\rho(x) A\left(\theta^*_\ell+\ve\sum_{k=0}^{n-1}\omega(x_k,\theta^*_\ell)\right) dx+\cO(\ve^2n^2+\ve)\\
&=\int_a^b\rho(x) A(\theta^*_\ell)dx+\ve\sum_{k=0}^{n-1}\int_a^b\rho(x) \langle\nabla A(\theta^*_\ell),\omega(x_k,\theta^*_\ell)\rangle dx+\cO(\ve)\\
&=\int_a^b\rho(x) A(G_\ell(x))dx+\cO(\ve)\\
&\quad+\ve\sum_{k=0}^{n-1}\int_a^b\rho(x) \langle\nabla A(\theta^*_\ell),\omega(f_*^k\circ Y_n(x),\theta^*_\ell)\rangle dx\\
&=\mu_\ell(A(\theta_0))+\ve\sum_{k=0}^{n-1}\int_{\bT^1}\tilde\rho_n(x) \langle\nabla A(\theta^*_\ell),\omega(f_*^k(x),\theta^*_\ell)\rangle dx+\cO(\ve)
\end{split}
\]
where $\tilde\rho_n(x)=\left[\frac{\chi_{[a,b]}\rho}{Y_n'}\right]\circ Y_n^{-1}(x)$.  Note that $\int_{\bT^1}\tilde
\rho_n=1$ but, unfortunately, $\|\tilde\rho\|_{BV}$ may be enormous.  Thus, we cannot estimate the integral in the above
expression by naively using decay of correlations.  Yet, equation \eqref{eq:deriv-Y} implies $|Y'_n-1|\leq C_\# \ve
n^2$.  Moreover, $\bar\rho= (\chi_{[a,b]}\rho)\circ Y^{-1}$ has uniformly bounded variation.\footnote{ Indeed, for all
  $\vf\in\cC^1$, $|\vf|_\infty\leq 1$, $\int\bar\rho\vf'=\int_a^b\rho \cdot \vf'\circ Y\cdot Y'=\int_a^b\rho (\vf\circ
  Y)'\leq \|\rho\|_{BV}$.}  Accordingly, by the decay of correlations and the $\cC^1$ dependence of the invariant
measure on $\theta$ (see Section~\ref{sec:unperturbed}) we have
\[
\begin{split}
\int_{\bT^1}\tilde\rho_n(x) \langle\nabla A(\theta^*_\ell), \omega&(f_*^k(x),\theta^*_\ell)\rangle dx=\int_{\bT^1}\bar\rho_n(x) \langle\nabla A(\theta^*_\ell),\omega(f_*^k(x),\theta^*_\ell)\rangle dx+\cO(\ve n^2)\\
&=m_{\textrm{Leb}}(\tilde\rho_n(x))m_{\theta^*_\ell}\left(\langle\nabla A(\theta^*_\ell),\omega(\cdot,\theta^*_\ell)\rangle\right)+\cO(\ve n^2+e^{-\const k})\\
&=\mu_\ell\left(\langle\nabla A(\theta_0),\overline\omega(\theta_0)\rangle\right)+\cO(\ve n^2+e^{-\const k}).
\end{split}
\]
Accordingly,
\begin{equation}\label{eq:one-step-mu}
\mu_\ell\left(A(\theta_n)\right)=\mu_\ell(A(\theta_0)+\ve n\langle \nabla A(\theta_0),\bar\omega(\theta_0)\rangle)+\cO(n^3\ve^2+\ve).
\end{equation}
Finally, we choose $n=\lceil\ve^{-\frac 13}\rceil$ and set $h=\ve n$.  We define inductively standard families such that
$\frkL_{\ell_0}=\{\ell\}$ and for each standard pair $\ell_{i+1}\in \frkL_{\ell_i}$ the family $\frkL_{\ell_{i+1}}$ is a
standard decomposition of the measure $(F_\ve^{n})^*\mu_{\ell_{i+1}}$.  Thus, setting $m=\lceil t\ve^{-\frac 23}\rceil
-1$, recalling equation~\eqref{eq:one-step-mu} and using repeatedly Proposition~\ref{p_invarianceStandardPairs},
 \[
\begin{split}
  &\bE_\ell^\ve(A(\Theta(t)))=\mu_\ell(A(\theta_{t\ve^{-1}}))=\mu_\ell(A(\theta_0))+\sum_{k=0}^{m-1}\mu_\ell(A(\theta_{\ve^{-1}(k+1)h}))-A(\theta_{\ve^{-1}kh}))\\
  &=\mu_\ell(A(\theta_0))+\sum_{k=0}^{m-1}\sum_{\ell_1\in\frkL_{\ell_0}}\dots\sum_{\ell_{k-1}\in\frkL_{\ell_{k-2}}}
  \prod_{j=1}^{k-1}\nu_{\ell_j}
  \left[\mu_{\ell_{k-1}}(\ve^{\frac 23}\langle \nabla A(\theta_{0}),\bar\omega(\theta_{0})\rangle)+\cO(\ve )\right]\\
  &=\bE_\ell^\ve\left(A(\Theta(0))+\sum_{k=0}^{m-1}\langle \nabla A(\Theta(kh)),\bar\omega(\Theta(kh))\rangle h\right)+\cO(\ve^{\frac 13} t)\\
  &=\bE_\ell^\ve\left(A(\Theta(0))+\int_0^t\langle \nabla A(\Theta(s)),\bar\omega(\Theta(s))\rangle
    ds\right)+\cO(\ve^{\frac 13} t ).
\end{split}
\]
The lemma follows by taking the limit $\ve\to 0$.
\end{proof}

\subsection{The Martingale Problem at work}\

\noindent First of all let us specify precisely what we mean by the {\em martingale problem}.
\begin{defin}[Martingale Problem]\label{def:martingale} Given a Riemannian manifold $S$, a linear operator $\cL:\cD(\cL)\subset \cC^0( S,\bR^d)\to \cC^0(S,\bR^d)$, a set of measures $\bP_y$, $y\in S$, on $\cC^0([0,T], S)$ and a filtration $\cF_t$ we say that $\{\bP_y\}$ satisfies the  martingale problem if for each function $A\in \cD(\cL)$,
\[
\begin{split}
  &\bP_y(\{z(0)=y\})=1\\
  &M(t,z):=A(z(t))-A(z(0))-\int_0^t\cL A(z(s))ds \text{ is } \cF_t \text{-martingale under all } \bP_y .
\end{split}
\]
\end{defin}
We can now prove the last announced result.
\begin{lem}\label{lem:martingale}
If $\bar\omega$ is Lipschitz, then
the martingale problem determined by \eqref{eq:martingale} has a unique solution consisting of the measures supported on the solutions of the ODE
\begin{equation}\label{eq:average}
\begin{split}
&\dot{\ThetaAvg}=\overline\omega(\ThetaAvg)\\
&\ThetaAvg(0)=y.
\end{split}
\end{equation}
\end{lem}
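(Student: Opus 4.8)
The plan is to prove existence and uniqueness separately. Existence is essentially free: if $\ThetaAvg$ solves the ODE \eqref{eq:average} with $\ThetaAvg(0)=y$ (which has a unique solution since $\bar\omega$ is Lipschitz), let $\bP_y=\delta_{\ThetaAvg}$ be the Dirac mass on that trajectory in $\cC^0([0,T],S)$. Then for $A\in\cC^2(S,\bR)$ the process $M(t,z)=A(z(t))-A(z(0))-\int_0^t\cL A(z(s))\,ds$ is, $\bP_y$-almost surely, the single deterministic function $t\mapsto A(\ThetaAvg(t))-A(y)-\int_0^t\langle\bar\omega(\ThetaAvg(s)),\nabla A(\ThetaAvg(s))\rangle\,ds$, which vanishes identically by the fundamental theorem of calculus and the chain rule. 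A constant (here, zero) process is trivially an $\cF_t$-martingale, so $\{\bP_y\}$ solves the martingale problem.

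The substance is uniqueness. Suppose $\bP_y$ is any solution. First I would upgrade the martingale property from $\cC^2$ test functions to a large enough class by a density argument (or simply observe that $\cC^2(\bT^d)$ already separates points and is closed under the operations we need). The key step is to show that under $\bP_y$ the coordinate process $z(t)$ is, almost surely, equal to the deterministic solution $\ThetaAvg(t)$. To do this, fix a smooth function and track its evolution: applying the martingale identity with $A$ and taking expectations gives $\bE_y(A(z(t)))=A(y)+\int_0^t\bE_y(\langle\bar\omega(z(s)),\nabla A(z(s))\rangle)\,ds$. More usefully, I would show that the process has no diffusive part. Pick coordinates and apply the martingale property to $A(\theta)=\theta_i$ and to $A(\theta)=\theta_i\theta_j$ (working on the universal cover $\bR^d$, valid for short times since paths are Lipschitz by Lemma \ref{lem:tight}, or localizing with a cutoff). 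From the first, $z_i(t)-\int_0^t\bar\omega_i(z(s))\,ds$ is a martingale $N_i(t)$. From the second, combined with the first via the product rule for the finite-variation and martingale parts, the quadratic variation $\langle N_i,N_j\rangle(t)$ is forced to be identically zero, because $\cL$ is a first-order operator (no second-derivative term appears, so $\cL(\theta_i\theta_j)=\theta_i\bar\omega_j+\theta_j\bar\omega_i$ exactly, with no extra $\sigma^2$-type contribution). A continuous martingale with vanishing quadratic variation is constant, so $N_i\equiv N_i(0)=0$, i.e. $z_i(t)=y_i+\int_0^t\bar\omega_i(z(s))\,ds$ almost surely.

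Thus $\bP_y$-almost every path satisfies the integral equation $z(t)=y+\int_0^t\bar\omega(z(s))\,ds$. Since $\bar\omega$ is Lipschitz, this integral equation has a unique continuous solution, namely $\ThetaAvg$; hence $\bP_y=\delta_{\ThetaAvg}$, which proves uniqueness. I expect the main obstacle to be the bookkeeping in the ``vanishing quadratic variation'' step: one must be careful that the relevant processes genuinely are martingales (not just local martingales) — here this is painless because $S$ is compact, $\bar\omega$ is bounded, and on the cover the paths live in a bounded region for $t\in[0,T]$ thanks to the uniform Lipschitz bound, so all the processes in play are bounded. A tidy alternative that sidesteps explicit quadratic variations: for any $C^2$ function $A$, both $M^A(t)=A(z(t))-A(y)-\int_0^t\cL A(z(s))\,ds$ and, applied to $A^2$, $M^{A^2}(t)=A(z(t))^2-A(y)^2-\int_0^t\cL(A^2)(z(s))\,ds$ are martingales; since $\cL$ is a derivation, $\cL(A^2)=2A\,\cL A$, and an elementary computation (Itô/Doob for $M^A$, or direct manipulation) shows $\bE_y\big((M^A(t))^2\big)=\bE_y\big(\int_0^t [\cL(A^2)-2A\,\cL A](z(s))\,ds\big)=0$, so $M^A\equiv 0$ for every $A$; applying this to coordinate functions again yields the integral equation and hence uniqueness.
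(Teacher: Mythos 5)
Your argument is correct, but it goes by a genuinely different route than the paper. The paper's proof of Lemma~\ref{lem:martingale} never touches quadratic variation: it takes the known deterministic solution $\ThetaAvg$ of \eqref{eq:average}, uses Lemma~\ref{lem:basic-m} with the two test functions $A(\theta)=\|\theta\|^2$ and $A(\theta)=\theta_i$ to compute $\frac{d}{dt}\bE_y(\|\Theta(t)-\ThetaAvg(t)\|^2)=2\,\bE_y(\langle\Theta(t)-\ThetaAvg(t),\bar\omega(\Theta(t))-\bar\omega(\ThetaAvg(t))\rangle)$, and then closes the estimate with Cauchy--Schwarz, the Lipschitz bound and Gronwall to get $\bE_y(\|\Theta(t)-\ThetaAvg(t)\|^2)=0$, i.e.\ $\bP_y(\{\ThetaAvg\})=1$. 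That is an $L^2$-comparison against the known trajectory, needing nothing beyond the differentiation lemma. You instead show that the martingale part itself is degenerate: since $\cL$ is a derivation, the carr\'e du champ $\cL(A^2)-2A\,\cL A$ vanishes, so $\bE_y\bigl((M^A(t))^2\bigr)=0$, every path satisfies $z(t)=y+\int_0^t\bar\omega(z(s))\,ds$ almost surely, and Cauchy--Lipschitz uniqueness finishes the job. Your version is the standard Stroock--Varadhan-style argument; it buys a conceptual explanation of \emph{why} first-order generators force deterministic limits, and it foreshadows Section~\ref{sec:fluctuations}, where the nonvanishing second-order term $\sigma^2$ is exactly the obstruction you are exploiting the absence of. The paper's version buys economy: no orthogonality of martingale increments, no quadratic variation, just Gronwall. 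Two housekeeping points, neither fatal: (i) your coordinate test functions $\theta_i$, $\theta_i\theta_j$ are not globally defined on $S=\bT^d$, but the paper commits the same sin with $\|\theta\|^2$, and both are repaired the same way (the measures are supported on uniformly Lipschitz paths by Lemma~\ref{lem:tight}, so one lifts to the cover or cuts off); (ii) the identity $\bE_y\bigl((M^A(t))^2\bigr)=\bE_y\bigl(\int_0^t[\cL(A^2)-2A\,\cL A](z(s))\,ds\bigr)$ does require the short telescoping/orthogonality computation you allude to --- it is standard, but it is the one step of your proof that is not a one-liner.
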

\begin{proof}
Let $\ThetaAvg$ be the solution of \eqref{eq:average} with initial condition $y\in\bT^d$ and $\bP_y$ the probability measure in the martingale problem.  The idea is  to compute
\[
\begin{split}
\frac d{dt}\bE_y(\|\Theta(t)-\ThetaAvg(t)\|^2)=&\frac d{dt}\bE_y(\langle \Theta(t),\Theta(t)\rangle)-2\langle \bar\omega(\ThetaAvg(t)),\bE_y(\Theta(t))\rangle\\
&-2\langle \ThetaAvg(t),\frac d{dt}\bE_y( \Theta(t))\rangle+ 2\langle \bar\omega(\ThetaAvg(t)),\ThetaAvg(t))\rangle.
\end{split}
\]
To continue we use Lemma \ref{lem:basic-m} where, in the first term $A(\theta)=\|\theta\|^2$, in the third $A(\theta)=\theta_i$ and the generator in \eqref{eq:martingale} is given by $\cL A(\theta)=\langle \nabla A(\theta),\bar\omega(\theta)\rangle$.
\[
\begin{split}
\frac d{dt}\bE_y(\|\Theta(t)-\ThetaAvg(t)\|^2)&=2\bE_y(\langle \Theta(t),\bar\omega(\Theta(t))\rangle)-2\langle \bar\omega(\ThetaAvg(t)),\bE_y(\Theta(t))\rangle\\
&\quad-2\langle \ThetaAvg(t),\bE_y(\bar\omega(\Theta(t)))\rangle+2\bE_y(\langle \ThetaAvg(t),\bar\omega(\ThetaAvg(t))\rangle)\\
&=\bE_y(\langle\Theta(t)-\ThetaAvg(t),\overline\omega(\Theta(t))-\overline\omega(\ThetaAvg(t))\rangle).
\end{split}
\]
By the Lipschitz property of $\bar\omega$ (let $C_L$ be the Lipschitz constant), using the Schwartz inequality and
integrating we have
\[
\bE_y(\|\Theta(t)-\ThetaAvg(t)\|^2)\leq 2C_L\int_0^t \bE_y(\|\Theta(s)-\ThetaAvg(s)\|^2)ds
\]
which, by Gronwall's inequality, implies that
\[
\bP_y(\{\ThetaAvg\})=1.\qedhere
\]
\end{proof}

\section{A recap of what we have done so far}\label{sec:recap}
We have just seen that the martingale method (in Dolgopyat's version) consists of four steps
\begin{enumerate}
\item Identify a suitable class of measures on path space which allow one to handle the conditioning problem (in our case: the one coming from standard pairs)
\item Prove tightness for such measures (in our case: they are supported on uniformly Lipschitz functions)
\item Identify an equation characterizing the accumulation points (in our case: an ODE)
\item Prove uniqueness of the limit equation in the martingale sense.
\end{enumerate}
The beauty of the previous scheme is that it can be easily adapted to a variety of problems.  To convince the reader of this fact we proceed further and apply it to obtain more refined information on the behavior of the system.

\section{Fluctuations (the Central Limit Theorem)}\label{sec:fluctuations}
It is possible to study the limit behavior of $\zeta_\ve$ using the strategy summarized in Section \ref{sec:recap},
even though now the story becomes technically more involved.  Let us discuss the situation a bit more in detail.  Let
$\widetilde\bP^\ve_\ell$ be the path measure describing $\zeta_\ve$ when $(x_0,\theta_0)$ are distributed according to
the standard pair $\ell$.\footnote{ As already explained, here we allow $\ell$ to stand also for a family $\{\ell_\ve\}$
  which weakly converges to $\ell$. In particular, this means that $\ThetaAvg$ is also a random variable, as it
  depends on the initial condition $\theta_0$.}  Note that, $\widetilde\bP^\ve_\ell=(\zeta_\ve)_*\mu_\ell$.  Again, we
provide a proof of the claimed results based on some facts that will be proven in later sections.

\begin{proof}[{\bf Proof of Theorem \ref{thm:diffusion}}]
  First of all, the sequence of measures $\widetilde\bP^\ve_\ell$ is tight, which will be proven in Proposition
  \ref{lem:tight1}.  Next, by Proposition \ref{prop:diff-eq}, we have that
\begin{equation}\label{eq:uniform-martingale}
\lim_{\ve\to 0}\sup_{\ell\in\overline\frkL_\ve}\left|\widetilde\bE_\ell^\ve\left(A(\zeta(t))-A(\zeta(0))-\int_0^t\cL_s A(\zeta(s)) ds\right)\right|=0,
\end{equation}
where
\begin{equation}\label{eq:secondorderop}
(\cL_s A)(\zeta)=\langle \nabla A(\zeta), D\bar\omega(\ThetaAvg(s)) \zeta\rangle +\frac 12\sum_{i,j=1}^d[\sigma^2(\ThetaAvg(s))]_{i,j} \partial_{\zeta_i}\partial_{\zeta_j}A(\zeta),
\end{equation}
with diffusion coefficient $\sigma^2$ given by \eqref{e_definitionBarChi}. In the following we will often write, slightly abusing notations,
$\sigma(t) $ for $\sigma(\ThetaAvg(t))$.

We can then use equation \eqref{eq:uniform-martingale} and Lemma \ref{lem:condition} followed by Lemma \ref{lem:limit-cond} to obtain that
\[
A(\zeta(t))-A(\zeta(0))-\int_0^t\cL_s A(\zeta(s)) ds
\]
is a martingale under any accumulation point $\widetilde\bP$ of the measures $\widetilde\bP^\ve_\ell$ with respect to the filtration $\cF_t$ with $\widetilde\bP(\{\zeta(0)=0\})=1$. In Proposition \ref{lem:unique} we will prove that such a problem has a unique solution thereby showing that $\lim_{\ve\to 0}\widetilde\bP^\ve_\ell=\widetilde\bP$.

Note that the time dependent operator $\cL_s$ is a second order operator, this means that the accumulation points of $\zeta_\ve$ do not satisfy a deterministic equation, but rather a stochastic one. Indeed our last task is to show that $\widetilde\bP$ is equal in law to the measure determined by the stochastic differential equation
 \begin{equation}\label{eq:final-dev}
 \begin{split}
 &d\zeta= \langle D\bar\omega\circ \ThetaAvg(t),\zeta\rangle dt+ \sigma dB\\
 &\zeta(0)=0
 \end{split}
 \end{equation}
 where $B$ is a standard $\bR^d$ dimensional Brownian motion. Note that the above equation is well defined in
 consequence of Lemma \ref{lem:coboundary} which shows that the matrix $\sigma^2$ is symmetric and non negative,
 hence $\sigma=\sigma^T$ is well defined and strictly positive if $\hat\omega$ is not a coboundary (see  Lemma \ref{lem:coboundary}). To conclude it suffices to show that the probability measure describing the
 solution of \eqref{eq:final-dev} satisfies the martingale problem.\footnote{ We do not prove that such a solution
   exists as this is a standard result in probability, \cite{V2}.} It follows from It\=o's calculus, indeed if $\zeta$
 is the solution of \eqref{eq:final-dev} and $A\in\cC^r$, then It\=o's formula reads
 \[
 d A(\zeta)=\sum_i\partial_{\zeta_i} A(\zeta) d\zeta_i+\frac 12\sum_{i,j,k}\partial_{\zeta_i} \partial_{\zeta_j} A(\zeta)\sigma_{ik}\sigma_{jk}dt.
\]
Integrating it from $s$ to $t$ and taking the conditional expectation we have
\[
\bE\left(A(\zeta(t))-A(\zeta(s))-\int_s^t\cL_\tau A(\zeta(\tau))d\tau\;\big|\; \cF_s\right)=0.
\]
See Appendix \ref{sec:moi} for more details on the relation between the Martingale problem and the theory of SDE and how this allows to dispense form It\=o's formula altogether.

We have thus seen that the measure determined by \eqref{eq:final-dev} satisfies the martingale problem, hence it must agree with $\widetilde\bP$ since $\widetilde\bP$ is the unique solution of the martingale problem. The proof of the Theorem is concluded by noticing that \eqref{eq:final-dev} defines a zero mean Gaussian process (see the end of the proof of Proposition \ref{lem:unique}).
\end{proof}
\subsection{Tightness}
\begin{prop}\label{lem:tight1}
  For every standard pair $\ell$, the measures $\{\widetilde\bP_\ell^\ve\}_{\ve>0}$ are tight.
\end{prop}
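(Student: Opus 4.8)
The plan is to apply the classical moment criterion for tightness of a family of probability measures on $\cC^0([0,T],\bR^d)$ (see, e.g., \cite{SV}). Since $\zeta_\ve(0)=0$ for every $\ve$, it suffices to establish the fourth-moment increment bound
\[
\widetilde\bE_\ell^\ve\big(\|\zeta_\ve(t)-\zeta_\ve(s)\|^4\big)\le\Const\,|t-s|^2\qquad\text{for all }0\le s\le t\le T ,
\]
with $\Const$ independent of $\ell\in\overline\frkL_\ve$ and of all sufficiently small $\ve$. Increments on scales $|t-s|<\ve$ are harmless: by \eqref{eq:path} and the Lipschitz regularity of $\ThetaAvg$ one has $\|\zeta_\ve(t)-\zeta_\ve(s)\|\le\Const\,\ve^{-1/2}|t-s|$ there, so the left-hand side is $\le\Const\,\ve^{-2}|t-s|^4\le\Const\,|t-s|^2$; the real task is the bound on scales $|t-s|\ge\ve$.

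First I would separate the ``noise'' from the ``drift''. Writing $N_u=\lfloor\ve^{-1}u\rfloor$, using \eqref{eq:path}, splitting $\omega=\bar\omega+\hat\omega$, comparing the Riemann sum $\ve\sum_k\bar\omega(\ThetaAvg(\ve k))$ with $\int\bar\omega(\ThetaAvg(\tau))\,d\tau$, and using the Lipschitz bound $\|\bar\omega(\theta_k)-\bar\omega(\ThetaAvg(\ve k))\|\le C_L\,\ve^{1/2}\|\zeta_\ve(\ve k)\|$, one gets, uniformly in $\ell$,
\[
\|\zeta_\ve(t)-\zeta_\ve(s)\|\le\ve^{1/2}\Big\|\sum_{k=N_s}^{N_t-1}\hat\omega(x_k,\theta_k)\Big\|+C_L\int_s^t\|\zeta_\ve(\tau)\|\,d\tau+\Const\,\ve^{1/2}(1+|t-s|) .
\]
Raising this to the fourth power, taking $\widetilde\bE_\ell^\ve$, and using H\"older's inequality ($\big(\int_s^t\|\zeta_\ve(\tau)\|\,d\tau\big)^4\le|t-s|^3\int_s^t\|\zeta_\ve(\tau)\|^4\,d\tau$) on the integral term, the problem is reduced to the single \emph{noise estimate}
\[
\ve^2\,\mu_\ell\Big(\Big\|\sum_{k=k_0}^{k_1-1}\hat\omega(x_k,\theta_k)\Big\|^4\Big)\le\Const\,\big(\ve\,(k_1-k_0)\big)^2 ,\qquad 0\le k_0\le k_1\le\ve^{-1}T ,
\]
uniformly in $\ell\in\overline\frkL_\ve$. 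Indeed, taking $k_0=0$ in the noise estimate and inserting it into the displayed inequality with $s=0$, a Gronwall argument in integral form yields the a priori bound $\sup_{\ell}\sup_{\tau\le T}\widetilde\bE_\ell^\ve(\|\zeta_\ve(\tau)\|^4)\le\Const$; with this the H\"older term above is $\le\Const\,|t-s|^4$ and the required increment bound follows.

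The noise estimate would be proved by the multi-scale scheme already used in the proof of Lemma \ref{lem:average}. On windows of length at most $\lceil\ve^{-1/3}\rceil$ the shadowing estimates of Appendix \ref{sec:shadow} (Lemmata \ref{lem:shadow-q} and \ref{lem:der-bounds-geo}) allow one to freeze the slow variable to $\theta^*_\ell$ and to replace the genuine orbit by an orbit of $f(\cdot,\theta^*_\ell)$, up to errors that, after multiplication by $\ve^2$, are absorbed into the right-hand side; the possible blow-up of the variation of the transported density is neutralised, exactly as in Lemma \ref{lem:average}, by the change of variables $Y_n$, which delivers a density of uniformly bounded variation, so that the decay of (multiple) correlations recalled in Section \ref{sec:unperturbed} applies with constants uniform in $\ell$. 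To move across several consecutive windows one invokes Proposition \ref{p_invarianceStandardPairs}, rewriting $\mu_\ell(\,\cdot\,)$ as a convex combination, over a standard decomposition, of measures $\mu_{\ell'}(\,\cdot\,)$; this restores the ``conditioning'' that decouples distant blocks up to exponentially small errors. Expanding $\big\|\sum_k\hat\omega(x_k,\theta_k)\big\|^4$ into a sum over ordered four-tuples $k_0\le j_1\le j_2\le j_3\le j_4<k_1$ and using $m_{\theta^*_\ell}(\hat\omega(\cdot,\theta^*_\ell))=0$, every mixed moment factorises over the tightly clustered blocks of the tuple and is exponentially suppressed in the size of any large internal gap; thus the sum is dominated by the configurations in which the four times cluster into two tight pairs, contributing $\cO\big((k_1-k_0)^2\big)$, while the remaining configurations sum, by a geometric series, to a lower-order term. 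This gives the noise estimate.

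The heart of the matter, and the step I expect to be the main obstacle, is precisely this noise estimate, uniformly over all standard pairs. Three difficulties must be handled at once: (i) the slow drift of $\theta_k$ over a window of length $\sim\ve^{-1}$, which forces the multi-scale decomposition and a careful accounting of the errors accumulated block by block; (ii) the loss of regularity of the conditional densities under iteration, which rules out a naive use of decay of correlations and requires the $Y_n$ change-of-variables device; and (iii) the combinatorics of the four-tuple sum, where one must verify that the exponent is exactly $|t-s|^2$ and no worse, i.e.\ that the ``connected'' large-gap contributions are genuinely subleading. By comparison, the fact that $\hat\omega$ is not exactly $\mu_\ell$-centred is only a minor nuisance: it produces disconnected terms that are small by the very decay-of-correlations mechanism already exploited in the proof of Lemma \ref{lem:average}.
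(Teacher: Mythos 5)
Your proposal is correct and follows essentially the same route as the paper: the Kolmogorov moment criterion with $\beta=4$, the decomposition of $\zeta_\ve(t)-\zeta_\ve(s)$ into a centred noise sum plus a drift controlled by a Gronwall-type bootstrap, and a fourth-moment analogue of Lemma \ref{lem:variance-basic} proved via standard-pair decompositions, shadowing with the $Y_n$ change of variables, and decay of correlations, with the two-tight-pairs configurations dominating the four-tuple sum. Your sketch of that last combinatorial step is at the same level of detail as the paper's own (which likewise only outlines the generalization of Lemma \ref{lem:variance-basic} to fourth moments).
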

\begin{proof}
  Now the proof of tightness is less obvious since the paths have a Lipschitz constant that explodes.  Luckily, there
  exists a convenient criterion for tightness: Kolmogorov criterion \cite[Remark A.5]{V2}.
\begin{thm}[Kolmogorov]
Given a sequence of measures $\bP^\ve$ on $\cC^0([0,T],\bR)$, if there exists $\alpha,\beta, C>0$ such that
\[
\bE^\ve(|z(t)-z(s)|^{\beta})\leq C|t-s|^{1+\alpha}
\]
for all $t,s\in [0,T]$ and the distribution of $z(0)$ is tight, then $\{\bP^\ve\}$ is tight.
\end{thm}
Note that $\zeta_\ve(0)=0$.
Of course, it is easier to apply the above criteria with $\beta\in\bN$.  It is reasonable to expect that the fluctuations behave like a Brownian motion, so the variance should be finite.  To verify this let us compute first the case $\beta=2$.
Note that, setting $\hat\omega(x,\theta)=\omega(x,\theta)-\bar\omega(\theta)$,
\begin{equation}\label{eq:zeta-shape}
\begin{split}
\zeta_\ve(t)&=\sqrt{\ve}\sum_{k=0}^{\lceil \ve^{-1}t\rceil-1}\left[\omega(x_k,\theta_k)-\bar\omega(\ThetaAvg(\ve k))\right]+\cO(\sqrt \ve)\\
&=\sqrt{\ve}\sum_{k=0}^{\lceil \ve^{-1}t\rceil-1}\left[\hat\omega(x_k,\theta_k)+\bar\omega(\theta_k)-\bar\omega(\ThetaAvg(\ve k))\right]+\cO(\sqrt \ve)\\
&=\sqrt{\ve}\sum_{k=0}^{\lceil \ve^{-1}t\rceil-1}\left[\hat\omega(x_k,\theta_k)+\sqrt{\ve}D\bar\omega(\ThetaAvg(\ve k))\zeta_\ve(k\ve)\right]+\cO(\sqrt \ve)\\
&\quad\quad+\sum_{k=0}^{\lceil \ve^{-1}t\rceil-1}\cO(\ve^{\frac 32}\|\zeta_\ve(\ve k)\|^2).
\end{split}
\end{equation}
We start with a basic result.
\begin{lem}\label{lem:variance-basic}
For each standard pair $\ell$ and $k,l\in\{0,\dots,\ve^{-1}\}$, $k\geq l$, we have
\[
\mu_\ell\left(\left\|\sum_{j=l}^k\hat\omega(x_j,\theta_j)\right\|^2\right)\leq C_\# (k-l).
\]
\end{lem}
The proof of the above Lemma is postponed to the end of the section.  Let us see how it can be profitably used.
Note that, for $t=\ve k, s=\ve l$,
\begin{equation}\label{eq:variance-pre}
\widetilde\bE_\ell^\ve(\|\zeta(t)-\zeta(s)\|^2)\leq C_\#|t-s|+C_\#|t-s|\ve\sum_{j=l}^k\mu_\ell(\|\zeta_\ve(\ve j)\|^2)+C_\#\ve,
\end{equation}
where we have used Lemma \ref{lem:variance-basic} and the trivial estimate $\|\zeta_\ve\|\leq\Const\ve^{-\frac 12}$.
If we use the above with $s=0$ and define $M_k=\sup_{j\leq k}\mu_\ell(|\zeta_\ve(\ve j)|^2)$ we have
\[
M_k\leq C_\#\ve k+C_\#k^2\ve^2 M_k.
\]
Thus there exists $C>0$ such that, if $k\leq C\ve^{-1}$, we have $M_k\leq C_\#\ve k$.  Hence, we can substitute such an estimate in \eqref{eq:variance-pre} and obtain
\begin{equation}\label{eq:momentum2}
\widetilde\bE_\ell^\ve(\|\zeta(t)-\zeta(s)\|^2)\leq C_\#|t-s|+C_\#\ve.
\end{equation}
Since the estimate for $|t-s|\leq C_\#\ve$ is trivial, we have the bound,
\[
\widetilde\bE_\ell^\ve(\|\zeta(t)-\zeta(s)\|^2)\leq C_\#|t-s|.
\]
This is interesting but, unfortunately, it does not
suffice to apply the Kolmogorov criteria.  The next step could be to compute for $\beta=3$.  This has the well known
disadvantage of being an odd function of the path, and hence one has to deal with the absolute value. Due to this, it
turns out to be more convenient to consider directly the case $\beta=4$. This can be done in complete analogy with the
above computation, by first generalizing the result of Lemma \ref{lem:variance-basic} to higher momenta. Doing so we obtain
\begin{equation}\label{eq:momentum4}
\widetilde\bE_\ell^\ve(\|\zeta(t)-\zeta(s)\|^4)\leq C_\#|t-s|^{2},
\end{equation}
which concludes the proof of the proposition. Indeed, the proof of Lemma \ref{lem:variance-basic} explains how to treat correlations. Multiple correlations can be treated similarly and one can thus show that they do not contribute to the leading term. Thus the computation becomes similar (although much more involved) to the case of the sum independent zero mean random variables $X_i$ (where no correlations are present), that is
\[
\bE([\sum_{i=l}^kX_i]^4)=\sum_{i_1,\dots, i_4=l}^k\bE(X_{i_1}X_{i_2}X_{i_2}X_{i_4})=\sum_{i,j=l}^k\bE(X_i^2X_j^2)=\cO((k-l)^2).
\]

For future use let us record that, by equation \eqref{eq:momentum4} and
the Young inequality,
\begin{equation}\label{eq:momentum3}
\widetilde\bE_\ell^\ve(\|\zeta(t)-\zeta(s)\|^3)\leq C_\#|t-s|^{\frac 32}.\qedhere
\end{equation}
\end{proof}
We still owe the reader the
\begin{proof}[Proof of Lemma \ref{lem:variance-basic}]
The proof starts with a direct computation:\footnote{ To simplify notation we do the computation in the case $d=1$, the general case is identical.}
\begin{align*}
\mu_\ell\left(\left|\sum_{j=l}^k\hat\omega(x_j,\theta_j)\right|^2\right)
&\leq \sum_{j=l}^k\mu_\ell\left(\hat\omega(x_j,\theta_j)^2\right)\\
&\phantom{\leq}+2\sum_{j=l}^k\sum_{r=l+1}^k\mu_\ell\left(\hat\omega(x_j,\theta_j)\hat\omega(x_r,\theta_r)\right)\\
&\leq C_\#|k-l|+2\sum_{j=l}^k\sum_{r=j+1}^k\mu_\ell\left(\hat\omega(x_j,\theta_j)\hat\omega(x_r,\theta_r)\right).
\end{align*}
To compute the last correlation, remember Proposition \ref{p_invarianceStandardPairs}.  We can thus call $\frkL_j$ the standard family associated to $(F_\ve^j)_*\mu_\ell$ and, for $r\geq j$, we write
\[
\begin{split}
\mu_\ell\left(\hat\omega(x_j,\theta_j)\hat\omega(x_r,\theta_r)\right)&=\sum_{\ell_1\in\frkL_j}\nu_{\ell_1}\mu_{\ell_1}\left(\hat\omega(x_0,\theta_0)\hat\omega(x_{r-j},\theta_{r-j})\right)\\
&=\sum_{\ell_1\in\frkL_j}\nu_{\ell_1}\int_{a_{\ell_1}}^{b_{\ell_1}}\rho_{\ell_1}(x)\hat\omega(x,G_{\ell_1}(x))\hat\omega(x_{r-j},\theta_{r-j}).
\end{split}
\]
We would like to argue as in the proof of Lemma \ref{lem:average} and try to reduce the problem to
\[
\begin{split}
\int_{a_{\ell_1}}^{b_{\ell_1}}\rho_{\ell_1}(x)\hat\omega(x,\theta^*_{\ell_1})\hat\omega(x_{r-j},\theta^*_{\ell_1})&=\int_{a_{\ell_1}}^{b_{\ell_1}}\rho_{\ell_1}(x)\hat\omega(x,\theta^*_{\ell_1})\hat\omega(f_*^{r-j}(Y_{r-j}(x)),\theta^*_{\ell_1})\\
&=
\int_{\bT^1}\tilde\rho(x)\hat\omega(Y^{-1}_{r-j}(x),\theta^*_{\ell_1})\hat\omega(f_*^{r-j}(x),\theta^*_{\ell_1}),
\end{split}
\]
but then the mistake that we would make substituting $\tilde \rho$ with $\bar\rho$ is too big for our current purposes.  It is thus necessary to be more subtle.  The idea is to write $\rho_{\ell_1}(x)\hat\omega(x,G_{\ell_1}(x))=\alpha_1\hat \rho_1(x)+\alpha_2\hat \rho_2(x)$, where $\hat\rho_1,\hat \rho_2$ are standard densities.\footnote{ In fact, it would be more convenient to define standard pairs with signed (actually even complex) measures, but let us keep it simple.} Note that $\alpha_1,\alpha_2$ are uniformly bounded.  Next, let us fix $L>0$ to be chosen later and assume $r-j\geq L$.  Since $\ell_{1,i}=(G,\hat\rho_i)$ are standard pairs, by construction, calling $\frkL_{\ell_{1,i}}=(F^{r-j-L})_*\mu_{\ell_{1,i}}$ we have
\[
\begin{split}
\int_{a_{\ell_1}}^{b_{\ell_1}}\hat \rho_i(x)\hat\omega(x_{r-j},\theta_{r-j})&=\sum_{\ell_2\in \frkL_{\ell_{1,i}}}\nu_{\ell_2}\int_{a_{\ell_2}}^{b_{\ell_2}}\rho_{\ell_{2}}(x)\hat\omega(f_*^{L}(Y_L(x)),\theta^*_{\ell_2})+\cO(\ve L)\\
&=\sum_{\ell_2\in \frkL_{\ell_{1,i}}}\nu_{\ell_2}\int_{\bT^1}\tilde\rho(x)\hat\omega(f_*^{L}(x),\theta^*_{\ell_2})+\cO(\ve L)\\
&=\sum_{\ell_2\in \frkL_{\ell_{1,i}}}\nu_{\ell_2}\int_{\bT^1}\bar\rho(x)\hat\omega(f_*^{L}(x),\theta^*_{\ell_2})+\cO(\ve L^2)\\
&=\cO(e^{-c_\# L}+\ve L^2),
\end{split}
\]
due to the decay of correlations for the map $f_*$ and the fact that $\hat\omega(\cdot,\theta^*_{\ell_2})$ is a zero average function for the invariant measure of $f_*$.  By the above we have
\[
\mu_\ell\left(\left|\sum_{j=l}^k\hat\omega(x_j,\theta_j)\right|^2\right)
\leq C_\# \sum_{j=l}^k\left\{[e^{-c_\# L}+\ve L^2](k-j) +1+\ve L^3\right\}
\]
which yields the result by choosing $L=c\log (k-j)$ for $c$ large enough.
\end{proof}

%%%%%%%%%%%%%%%%%%%%%%%%%%%%%%%%%%%%
\subsection{Differentiating with respect to time (poor man's It\=o's formula)}
\begin{prop} \label{prop:diff-eq} For every standard pair $\ell$ and $A\in\cC^3(S,\bR)$ we have
\[
\lim_{\ve\to 0}\sup_{\ell\in\overline\frkL_\ve}\left|\widetilde\bE_\ell^\ve\left(A(\zeta(t))-A(\zeta(0))-\int_0^t\cL_s A(\zeta(s)) ds\right)\right|=0.
\]
\end{prop}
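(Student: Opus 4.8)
The plan is to mimic the structure of the proof of Lemma~\ref{lem:average}, but carrying the Taylor expansion of $A$ to third order and keeping track of the quadratic term, which is exactly where the diffusion coefficient $\sigma^2$ will emerge. First I would fix a mesoscopic time scale: choose an intermediate block length $n=n(\ve)$, with $1\ll n\ll \ve^{-1/2}$ (say $n=\lceil\ve^{-1/3}\rceil$, as in Lemma~\ref{lem:average}), and set $h=\ve n$; the increment $\zeta_\ve(t+h)-\zeta_\ve(t)$ is then of size $\sqrt{\ve}\,\sqrt{n}=\sqrt h$, small, so that a Taylor expansion of $A$ around $\zeta_\ve(t)$ up to third order is legitimate, the cubic remainder being $\cO(h^{3/2})$ and hence summable to $\cO(\sqrt h\,t)\to 0$. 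Using the representation \eqref{eq:zeta-shape} for $\zeta_\ve$, the first-order term in the expansion contributes $\langle\nabla A(\zeta_\ve(t)),D\bar\omega(\ThetaAvg(t))\zeta_\ve(t)\rangle h$ (the drift part of $\cL_s$), plus the term $\sqrt{\ve}\sum\nabla A\cdot\hat\omega(x_k,\theta_k)$, which I expect to have vanishing expectation in the limit — this is the analogue of the martingale increment having zero mean and is handled, after Proposition~\ref{p_invarianceStandardPairs} conditions us onto a standard pair at time $t$, by the decay of correlations exactly as in Lemma~\ref{lem:average}.

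The heart of the matter is the second-order term $\tfrac12\sum_{i,j}\partial_{\zeta_i}\partial_{\zeta_j}A(\zeta_\ve(t))\cdot\ve\big(\sum_{k}\hat\omega_i(x_k,\theta_k)\big)\big(\sum_{k}\hat\omega_j(x_k,\theta_k)\big)$ over a block of length $n$. Conditioning on a standard pair $\ell_1$ at the start of the block and freezing $\theta\approx\theta^*_{\ell_1}$ and the dynamics $f_*=f(\cdot,\theta^*_{\ell_1})$ (the error from doing so over a block of length $n\ll\ve^{-1/2}$ being controlled via the shadowing estimates of Appendix~\ref{sec:shadow}, just as in Lemma~\ref{lem:variance-basic}), one must evaluate
\[
\ve\,\mu_{\ell_1}\!\Big(\sum_{k,k'=0}^{n-1}\hat\omega_i(f_*^k(\cdot),\theta^*)\,\hat\omega_j(f_*^{k'}(\cdot),\theta^*)\Big).
\]
By decay of correlations for $f_*$ (and the zero-average property of $\hat\omega$), the diagonal $k=k'$ gives $n\cdot m_{\theta^*}(\hat\omega_i\hat\omega_j)$ and the off-diagonal terms, summed over the separation $m=|k-k'|$, give $n\cdot\sum_{m\ge1}\big(m_{\theta^*}(\hat\omega_i(f_*^m)\hat\omega_j)+m_{\theta^*}(\hat\omega_j(f_*^m)\hat\omega_i)\big)$, with the tail beyond a logarithmic cutoff negligible; the boundary contributions from the two non-summed indices are $\cO(1)$, hence $\cO(\ve n)\cdot\cO(\ve^{-1})\cdot\ldots$ — more precisely $\cO(\ve)$ per block and $\cO(t)$ in total after one checks the bookkeeping. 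Multiplying by $\ve$ we get precisely $h\cdot[\sigma^2(\ThetaAvg(t))]_{ij}$ with $\sigma^2$ as in \eqref{e_definitionBarChi}. Summing the blocks and recognizing the Riemann sums $\sum_k(\cdots)h\to\int_0^t(\cdots)ds$, and absorbing the replacement of $\theta^*_{\ell_1}$ and of the block-start value of $\zeta_\ve$ by $\ThetaAvg(s)$ and $\zeta(s)$ (continuity of $\ThetaAvg$ and the moment bound $\mu_\ell(\|\zeta_\ve\|^2)\le\Const$ from Proposition~\ref{lem:tight1}), one arrives at $\widetilde\bE_\ell^\ve(A(\zeta(t))-A(\zeta(0))-\int_0^t\cL_sA(\zeta(s))\,ds)=o(1)$, uniformly over $\ell\in\overline\frkL_\ve$.

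The main obstacle I anticipate is the same one flagged in Lemma~\ref{lem:variance-basic}: after one step of the standard-pair decomposition, the density $\tilde\rho_n$ obtained by pushing forward along $Y_n$ has possibly enormous bounded variation, so decay of correlations cannot be applied naively; one must instead split $\rho\,\hat\omega$ into a bounded combination of genuine standard densities and push forward only by the controlled number of extra steps $L=\Const\log(\text{block length})$ before invoking mixing, exactly as in the proof of Lemma~\ref{lem:variance-basic}. A secondary technical point is ensuring that the cross terms between distinct mesoscopic blocks (which involve $\hat\omega$ at times in one block correlated with $\hat\omega$ at times in a later block) contribute only to lower order; this follows because conditioning onto the standard pair at the junction and then applying decay of correlations makes such cross terms $\cO(\ve)$ after summing, but it has to be said. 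Finally, the factors $\zeta_\ve(t)$ and $D\bar\omega(\ThetaAvg(t))$ appearing in the first-order term are $\cF_t$-measurable at the block start and hence may be pulled out of the block-expectation, so they do not interfere with the correlation estimates; I would make this explicit before taking limits.
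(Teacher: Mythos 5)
Your overall architecture is the right one and matches the paper's: Taylor-expand $A$ over a mesoscopic block to second order (controlling the remainder by the third-moment bound \eqref{eq:momentum3}), extract the drift from the $D\bar\omega\,\zeta$ part of \eqref{eq:zeta-shape}, extract $\sigma^2$ from the double sum of $\hat\omega$'s via decay of correlations, and use the splitting of $\rho\cdot(\text{bounded-derivative factor})$ into standard densities together with an $L$-step window before invoking mixing. However, there is a genuine quantitative gap in your choice of block length, and it is not a cosmetic one. You take $n=\lceil\ve^{-1/3}\rceil$, i.e.\ $h=\ve n=\ve^{2/3}$, importing the constraint $n\ll\ve^{-1/2}$ from Lemma~\ref{lem:average}. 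But consider the term you dismiss as "the martingale increment having zero mean", namely $\sqrt\ve\sum_{k=t\ve^{-1}}^{(t+h)\ve^{-1}}\mu_\ell\bigl(\langle\nabla A(\zeta_\ve(t)),\hat\omega(x_k,\theta_k)\rangle\bigr)$. After decomposing into standard pairs at time $t\ve^{-1}$, the summand for the first few values of $k-t\ve^{-1}$ is genuinely of order $e^{-c_\#(k-t\ve^{-1})}$ and not smaller (the standard-pair density has not yet equilibrated to $m_\theta$), so each block contributes an irreducible $\cO(\sqrt\ve)$. Summing over $t/h$ blocks gives $t\sqrt\ve/h$, which with $h=\ve^{2/3}$ is $t\,\ve^{-1/6}\to\infty$. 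This is emphatically \emph{not} "exactly as in Lemma~\ref{lem:average}": there the analogous boundary term carries a prefactor $\ve$ rather than $\sqrt\ve$, hence is $\cO(\ve)$ per block and harmless for any $n\to\infty$. Here the CLT scaling forces $h\gg\sqrt\ve$, i.e.\ blocks of length $n\gg\ve^{-1/2}$ — the opposite of your stated constraint. The paper takes $h=\ve^{1/3}$, so $n=\ve^{-2/3}$.

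This exposes an internal tension in your write-up: you motivate $n\ll\ve^{-1/2}$ by wanting to freeze the dynamics and apply the shadowing of Appendix~\ref{sec:shadow} over the whole block, and only in your last paragraph do you (correctly) retreat to freezing over windows of length $L$ — but you never revisit the block length. In fact the two issues are the same issue: the blocks \emph{must} be longer than $\ve^{-1/2}$ for the first-order boundary terms to be negligible, and precisely for that reason whole-block freezing is unavailable (the density error $\cO(\ve n^2)$ and the shadowing hypothesis $n<C\ve^{-1/2}$ both fail), so the $L$-windowed decomposition is not an optional refinement but is forced. Once you set $h\gg\sqrt\ve$ (with $e^{-c_\# L}\ll\sqrt\ve$ and $L$ not too large, e.g.\ $L=\ve^{-1/100}$ as in the paper), the rest of your error bookkeeping — the diagonal and near-diagonal terms giving $h\,\Gamma_{|k-k'|}$ summing to $h\sigma^2(\ThetaAvg(t))$, the $\cO(1)$ boundary corrections contributing $\cO(\ve)$ per block, the cross-block correlations killed by conditioning at the junction, and the replacement of $\zeta_\ve(k\ve)$ by $\zeta_\ve(t)$ in the drift term (which, incidentally, already follows from the crude bound $\mu_\ell(\|\zeta_\ve(k\ve)-\zeta_\ve(t)\|)\leq\Const\sqrt h$, giving $\cO(h^{3/2})$ per block) — goes through and reproduces the paper's proof.
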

\begin{proof}
As in Lemma \ref{lem:average}, the idea is to fix $h\in(0,1)$ to be chosen later, and compute
\begin{equation}\label{eq:two-step-one}
\begin{split}
\widetilde\bE^\ve_\ell&(A(\zeta(t+h))-A(\zeta(t)))=\widetilde\bE^\ve_\ell(\langle\nabla A(\zeta(t)),\zeta(t+h)-\zeta(t)\rangle)\\
&+\widetilde\bE^\ve_\ell(\frac 12\langle (D^2A)(\zeta(t))(\zeta(t+h)-\zeta(t)), \zeta(t+h)-\zeta(t)\rangle)+\cO( h^{\frac 32}),
\end{split}
\end{equation}
where we have used \eqref{eq:momentum3}.  Unfortunately this time the computation is a bit lengthy and rather boring, yet it basically does not contain any new idea, it is just a brute force computation.

Let us start computing the last term of \eqref{eq:two-step-one}.
Setting $\zeta^h(t)=\zeta(t+h)-\zeta(t)$ and $\Omega^h=\sum_{k=t\ve^{-1}}^{(t+h)\ve^{-1}}\hat\omega(x_k,\theta_k)$, by equations \eqref{eq:zeta-shape} and using the trivial estimate $\|\zeta_\ve(t)\|\leq \Const \ve^{-\frac12}$, we have
\[
\begin{split}
\widetilde\bE^\ve_\ell&(\langle (D^2A)(\zeta(t))\zeta^h(t), \zeta^h(t)\rangle)=
\ve\sum_{k,j=t\ve^{-1}}^{(t+h)\ve^{-1}}\mu_\ell(\langle (D^2A)(\zeta_\ve(t))\hat\omega(x_k,\theta_k),\hat\omega(x_j,\theta_j)\rangle)\\
&+\cO\left(\ve^{\frac 32}\sum_{j=\ve^{-1}t}^{(t+h)\ve^{-1}}\mu_\ell\left(\left\|\Omega^h\right\|\,\|\zeta_\ve(j\ve)\|\right)\right)+\cO\left(\ve\mu_\ell(\|\Omega^h\|)\right)\\
&+\cO\left(\ve^{2}\sum_{j=t\ve^{-1}}^{(t+h)\ve^{-1}}\mu_\ell(\|\Omega^h\|\,\|\zeta_\ve(j\ve)\|^{2})+\ve^2\sum_{k,j=t\ve^{-1}}^{(t+h)\ve^{-1}}\mu_\ell(\|\zeta_\ve(k\ve)\|\, \|\zeta_\ve(j\ve)\|)\right)\\
&+\cO\left(\ve^{\frac 32}\sum_{k=t\ve^{-1}}^{(t+h)\ve^{-1}}\mu_\ell(\|\zeta_\ve(k\ve)\|)+\ve^{\frac 52}\sum_{k,j=t\ve^{-1}}^{(t+h)\ve^{-1}}\mu_\ell(\|\zeta_\ve(k\ve)\|\, \|\zeta_\ve(j\ve)\|^{2})+\ve\right)\\
&+\cO\left(\ve^{2}\sum_{k=t\ve^{-1}}^{(t+h)\ve^{-1}}\mu_\ell(\|\zeta_\ve(k\ve)\|^2)+\ve^{3}\sum_{k,j=t\ve^{-1}}^{(t+h)\ve^{-1}}\mu_\ell(\|\zeta_\ve(k\ve)\|^{2}\, \|\zeta_\ve(j\ve)\|^{2})\right).
\end{split}
\]
Observe that~\eqref{eq:momentum2},~\eqref{eq:momentum3} and~\eqref{eq:momentum4} yield
\[
\mu_\ell(\|\zeta_\ve(k\ve)\|^m)= \mu_\ell(\|\zeta_\ve(k\ve)-\zeta_\ve(0)\|^m)\leq \Const (\ve k)^{\frac m2}\leq \Const
\]
for $m\in\{1,2,3,4\}$ and $k\leq \Const \ve^{-1}$.
We can now use Lemma \ref{lem:variance-basic} together with Schwartz inequality to obtain
\begin{equation}\label{eq:first-var}
\begin{split}
\widetilde\bE^\ve_\ell(\langle (D^2A)(\zeta(t))\zeta^h(t), \zeta^h(t)\rangle)=&
\,\ve\hskip-.3cm\sum_{k,j=t\ve^{-1}}^{(t+h)\ve^{-1}}\hskip-.3cm\mu_\ell(\langle (D^2A)(\zeta_\ve(t))\hat\omega(x_k,\theta_k),\hat\omega(x_j,\theta_j)\rangle)\\
&+\cO(\sqrt{\ve h}+ h^2+\ve).
\end{split}
\end{equation}
Next, we must perform a similar analysis on the first term of equation \eqref{eq:two-step-one}.
\begin{equation}\label{eq:palla-uno}
\begin{split}
\widetilde\bE^\ve_\ell(\langle\nabla A(\zeta(t))&,\zeta^h(t)\rangle)=\sqrt\ve\sum_{k=t\ve^{-1}}^{(t+h)\ve^{-1}}\mu_\ell(\langle\nabla A(\zeta_\ve(t)),\hat \omega(x_k,\theta_k)\rangle)\\
&+\ve\sum_{k=t\ve^{-1}}^{(t+h)\ve^{-1}}\mu_\ell(\langle\nabla A(\zeta_\ve(t)),D\bar\omega(\ThetaAvg(\ve k))\zeta_\ve(\ve k)\rangle)+\cO(\sqrt\ve).
\end{split}
\end{equation}
To estimate the term in the second line of \eqref{eq:palla-uno} we have to use again \eqref{eq:zeta-shape}:
\[
\begin{split}
  &\sum_{k=t\ve^{-1}}^{(t+h)\ve^{-1}}\mu_\ell(\langle\nabla A(\zeta_\ve(t)),D\bar\omega(\ThetaAvg(\ve k))\zeta_\ve(\ve
  k)\rangle)=
  h\ve^{-1}\mu_\ell(\langle\nabla A(\zeta_\ve(t)),D\bar\omega(\ThetaAvg(t))\zeta_\ve(t)\rangle)\\
  &\quad+\cO(\ve^{-1} h^2+\ve^{-\frac 12}
  h)+\sqrt\ve\sum_{k=t\ve^{-1}}^{(t+h)\ve^{-1}}\sum_{j=t\ve^{-1}}^k\mu_\ell(\langle\nabla
  A(\zeta_\ve(t)),D\bar\omega(\ThetaAvg(t))\hat\omega(x_j,\theta_j)\rangle).
\end{split}
\]
To compute the last term in the above equation let $\frkL_\ell$ be the standard family generated by $\ell$ at time $\ve^{-1} t$, then, setting $\alpha_\ve (\theta, t)= \nabla A(\ve^{-\frac 12}(\theta-\ThetaAvg(t)))$ and $\hat j=j-t\ve^{-1}$, we can write
\[
\mu_\ell(\langle\nabla A(\zeta_\ve(t)),D\bar\omega(\ThetaAvg(t))\hat\omega(x_j,\theta_j)\rangle)=\sum_{\ell_1\in \frkL_\ell}\sum_{r,s=1}^d\nu_{\ell_1}\mu_{\ell_1}(\alpha_\ve(\theta_0,t)_r D\bar\omega(\ThetaAvg(t))_{r,s}\hat\omega(x_{\hat j},\theta_{\hat j})_s).
\]
Next, notice that for every $r$, the signed measure $\mu_{\ell_1,r}(\phi)=\mu_{\ell_1}(\alpha_\ve(\theta_0,t)_r\phi)$
has density $\rho_{\ell_1}\alpha_\ve(G_{\ell_1}(x),t)_r$ whose derivative is uniformly bounded in $\ve, t$. We can then
write $\mu_{\ell_1,r}$ as a linear combination of two standard pairs $\ell_{1,i}$. Finally, given $L\in\bN$, if $\hat
j\geq L$, we can consider the standard families $\cL_{\ell_{1,i}}$ generated by $\ell_{1,i}$ at time $\hat j-L$ and
write, arguing as in the proof of Lemma \ref{lem:variance-basic},
\[
\begin{split}
\mu_{\ell_{1,i}}(\hat\omega(x_{\hat j},\theta_{\hat j})_s)&=\sum_{\ell_2\in \cL_{\ell_{1,i}}}\nu_{\ell_2}\mu_{\ell_2}(\hat\omega(x_{L},\theta_{L})_s)\\
&=\sum_{\ell_2\in \cL_{\ell_{1,i}}}\nu_{\ell_2}\int_{a_{\ell_2}}^{b_{\ell_2}}\rho_{\ell_2}(x)\hat\omega(f_{\theta_{\ell_2}^*}^L(x),\theta_{\ell_2}^*)_s+\cO(\ve L^2)=\cO(e^{-\Const L}+\ve L^2).
\end{split}
\]
Collecting all the above estimates yields
\begin{equation}\label{eq:palla-due}
\begin{split}
&\ve\sum_{k=t\ve^{-1}}^{(t+h)\ve^{-1}}\mu_\ell(\langle\nabla A(\zeta_\ve(t)),D\bar\omega(\ThetaAvg(\ve k))\zeta_\ve(\ve k)\rangle)=\cO( h^2+\ve^{\frac 12} h)\\
&+h\mu_\ell(\langle\nabla A(\zeta_\ve(t)),D\bar\omega(\ThetaAvg(t))\zeta_\ve(t)\rangle)+\cO(h^2 \ve^{\frac 12}L^2+h^2\ve^{-\frac 12}e^{-\Const L}+\ve^{\frac 12}L h).
\end{split}
\end{equation}
To deal with the second term in the first line of equation~\eqref{eq:palla-uno} we argue as before:
\[
\begin{split}
\sum_{k=t\ve^{-1}}^{(t+h)\ve^{-1}}\mu_\ell(\langle\nabla A(\zeta_\ve(t)),\hat \omega(x_k,\theta_k)\rangle)&=\sum_{k=t\ve^{-1}}^{t\ve^{-1}+L}\mu_\ell(\langle\nabla A(\zeta_\ve(t)),\hat \omega(x_k,\theta_k)\rangle)\\
&\quad\quad+\cO(h L^2+\ve^{-1}he^{\Const L})\\
&=\cO(L+h L^2+\ve^{-1}he^{\Const L}).
\end{split}
\]
Collecting the above computations and remembering \eqref{eq:zeta-shape} we obtain
\begin{equation}\label{eq:second-var}
\begin{split}
\widetilde\bE^\ve_\ell(\langle\nabla A(\zeta(t)),\zeta^h(t)\rangle)=&h\widetilde\bE^\ve_\ell(\langle\nabla A(\zeta(t)),D\bar\omega(\ThetaAvg(t))\zeta(t)\rangle)\\
&+\cO(h^2+L\sqrt\ve+h\sqrt \ve L^2)
\end{split}
\end{equation}
provided $L$ is chosen in the interval $[C_*\ln\ve^{-1},\ve^{-\frac 14}]$ with $C_*>0$ large enough.

To conclude we must compute the term on the right hand side of the first line of equation \eqref{eq:first-var}.  Consider first the case $|j-k|>L$.  Suppose $k>j$, the other case being equal, then, letting $\frkL_\ell$ be the standard family generated by $\ell$ at time $\ve^{-1} t$, and set $\hat k=k-\ve^{-1}t, \hat j=j-\ve^{-1}t$, $B(x,\theta,t)=(D^2A)(\ve^{-\frac 12}(\theta-\ThetaAvg(t)))$
\[
\mu_\ell(\langle (D^2A)(\zeta_\ve(t))\hat\omega(x_k,\theta_k),\hat\omega(x_j,\theta_j)\rangle)=\sum_{\ell_1\in\frkL_\ell}\nu_{\ell_1}\mu_{\ell_1}(\langle B(x_0,\theta_0,t)\hat\omega(x_{\hat k},\theta_{\hat k}),\hat\omega(x_{\hat j},\theta_{\hat j})\rangle).
\]
Note that the signed measure $\hat\mu_{\ell_1,r,s}(g)=\mu_{\ell_1}(B_{r,s}g)$ has a density with uniformly bounded derivative given by $\hat\rho_{\ell_1,r,s}=\rho_{\ell_1}B(x,G_{\ell_1}(x),t)_{r,s}$. Such a density can then be written as a linear combination of standard densities $\hat\rho_{\ell_1,r,s}=\alpha_{1,\ell_1,r,s}\rho_{1,\ell_1,r,s}+\alpha_{2,\ell_1,r,s}\rho_{2,\ell_1,r,s}$ with uniformly bounded coefficients $\alpha_{i,\ell_1,r,s}$.  We can then use the same trick at time $j$ and then at time $k-L$ and obtain that the quantity we are interested in can be written as a linear combination of quantities of the type
\[
\begin{split}
\mu_{\ell_3,r,s}(\hat\omega_s(x_{L},\theta_{L}))&=\mu_{\ell_3,r,s}(\hat\omega_s(x_{L},\theta_{\ell_3}^*)+\cO(L\ve)
=\int_a^b\tilde \rho_{r,s} \hat\omega_s(f_{\theta_{\ell_3}^*}^L( x),\theta_{\ell_3}^*)+\cO(L^2\ve)\\
&=\cO(e^{-\Const L}+L^2\ve)
\end{split}
\]
where we argued as in the proof of Lemma \ref{lem:variance-basic}.  Thus the total contribution of all such terms is of order $L^2 h^2+\ve^{-1}e^{-\Const L}h^2$.  Next, the terms such that $|k-j|\leq L$ but $j\leq \ve^{-1}t +L$ give a total contribution of order $L^2\ve$ while to estimate the other terms it is convenient to proceed as before but stop at the time $j-L$. Setting $\tilde k=k-j+L$ we obtain terms of the form
\[
\begin{split}
\mu_{\ell_2,r,s}( \hat\omega_s(x_{\tilde k},\theta_{\tilde k})\hat\omega_r ( x_{L},\theta_{L})\rangle)= \Gamma_{k-j}(\theta_{\ell_2}^*)_{r,s}+\cO(e^{-\Const L}+L^2\ve)
\end{split}
\]
where
\[
\Gamma_{k}(\theta)= \int_S\hat\omega(f_{\theta}^{k}(x),\theta)\otimes \hat\omega(x,\theta)\;m_{\theta}(dx).
\]
The case $j>k$ yields the same results but with $\Gamma_j^*$.
Remembering the smooth dependence of the covariance on the parameter $\theta$ (see \cite{L1}), substituting the result
of the above computation in \eqref{eq:first-var} and then \eqref{eq:first-var} and \eqref{eq:second-var} in
\eqref{eq:two-step-one} we finally have
\[
\begin{split}
&\widetilde\bE^\ve_\ell(A(\zeta(t+h))-A(\zeta(t)))=h\widetilde\bE^\ve_\ell(\langle\nabla A(\zeta(t)), D\bar\omega(\ThetaAvg(t))\zeta(t)\rangle)\\
&\quad+h\widetilde\bE^\ve_\ell(\operatorname{Tr}( \sigma^2(\ThetaAvg(t))D^2A(\zeta(t))))+\cO(L\sqrt\ve+hL^2\sqrt \ve+h^2 L^2)\\
&=\int_t^{t+h}\left[\widetilde\bE^\ve_\ell(\langle\nabla A(\zeta(s)), D\bar\omega(\ThetaAvg(s))\zeta(s)\rangle)+\widetilde\bE^\ve_\ell(\operatorname{Tr}( \sigma^2(\ThetaAvg(s))D^2A(\zeta(s))) \right]ds\\
&\quad+\cO(L\sqrt\ve+hL^2\sqrt \ve+h^{\frac 32}+h^2 L^2).
\end{split}
\]
The proposition follows by summing the $h^{-1}t$ terms in the interval $[0,t]$ and by choosing $L=\ve^{-\frac 1{100}}$
and $h=\ve^{\frac 13}$.
\end{proof}

In the previous Lemma the expression $\sigma^2$ just stands for a specified matrix, we did not prove that such a matrix
is positive definite and hence it has a well defined square root $\sigma$, nor we have much understanding of the
properties of such a $\sigma$ (provided it exists). To clarify this is our next task.
\begin{lem}\label{lem:coboundary} The matrices $\sigma^2(s)$, $s\in [0,T]$, are symmetric and non negative, hence they have a unique real symmetric square root $\sigma(s)$. In addition, if, for each $v\in\bR^d$, $\langle v,\bar\omega\rangle$ is not a smooth coboundary, then there exists $c>0$ such that $\sigma(s)\geq c\Id$.
\end{lem}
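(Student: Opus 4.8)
The plan is to reduce the whole statement to a scalar Green--Kubo variance. Fix $\theta\in S$ and $v\in\bR^d$, set $g=g_{v,\theta}:=\langle v,\hat\omega(\cdot,\theta)\rangle$, so that $m_\theta(g)=0$ since $\bar\omega(\theta)=m_\theta(\omega(\cdot,\theta))$, and write $S_n=\sum_{k=0}^{n-1}g\circ f_\theta^k$. Reading off \eqref{e_definitionBarChi} and using commutativity of pointwise multiplication, the off–diagonal terms making up the $(i,j)$ entry of $\sigma^2(\theta)$ are exactly those of the $(j,i)$ entry with $i,j$ interchanged; hence $\sigma^2(\theta)$ is symmetric and
\[
\langle v,\sigma^2(\theta)v\rangle=m_\theta(g^2)+2\sum_{m\ge 1}m_\theta\big(g\cdot g\circ f_\theta^m\big),
\]
a series converging absolutely by property~(3) of Section~\ref{sec:unperturbed}. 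Expanding $m_\theta(S_n^2)$ and using $m_\theta$--invariance to rewrite $m_\theta(g\circ f_\theta^k\cdot g\circ f_\theta^l)=m_\theta(g\cdot g\circ f_\theta^{l-k})$ gives
\[
\tfrac1n\,m_\theta(S_n^2)=m_\theta(g^2)+2\sum_{m=1}^{n-1}\Big(1-\tfrac mn\Big)m_\theta\big(g\cdot g\circ f_\theta^m\big),
\]
whose right-hand side tends to $\langle v,\sigma^2(\theta)v\rangle$ as $n\to\infty$; since $\tfrac1n m_\theta(S_n^2)\ge 0$, also $\langle v,\sigma^2(\theta)v\rangle\ge 0$. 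This proves the first sentence, with $\sigma(\theta)$ the (unique) symmetric non-negative square root furnished by the spectral theorem.

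For strict positivity I would bring in the transfer operator $\cL_\theta$ of $(f_\theta,m_\theta)$, characterized by $m_\theta\big((\cL_\theta\phi)\psi\big)=m_\theta\big(\phi\,(\psi\circ f_\theta)\big)$; recall $\cL_\theta\Id=\Id$, $\cL_\theta(\phi\circ f_\theta)=\phi$, and that $\cL_\theta$ is the $L^2(m_\theta)$--adjoint of the isometry $U_\theta\phi:=\phi\circ f_\theta$, so $U_\theta\cL_\theta$ is the orthogonal projection of $L^2(m_\theta)$ onto the closed subspace $\{\phi\circ f_\theta:\phi\in L^2(m_\theta)\}$. Since $g$ is $\cC^r$ with zero mean, the spectral gap of $\cL_\theta$ — on $\cB_1$, and, for the $\cC^r$ expanding maps $f_\theta$, also on $\cC^{r-1}$ — makes $h:=\sum_{n\ge 0}\cL_\theta^n g$ well defined, with $h\in\cC^{r-1}$ and $g=h-\cL_\theta h$. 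Using $m_\theta(g\cdot\cL_\theta^n g)=m_\theta(g\cdot g\circ f_\theta^n)$ gives $m_\theta(gh)=\sum_{n\ge0}m_\theta(g\cdot g\circ f_\theta^n)$, so
\[
\langle v,\sigma^2(\theta)v\rangle=2m_\theta(gh)-m_\theta(g^2)=m_\theta(h^2)-m_\theta\big((\cL_\theta h)^2\big)=\|h\|_{L^2(m_\theta)}^2-\|U_\theta\cL_\theta h\|_{L^2(m_\theta)}^2 ,
\]
the last equality because $U_\theta$ is an isometry. This is $\ge 0$, and it vanishes precisely when the projection $U_\theta\cL_\theta$ fixes $h$, i.e.\ when $h=(\cL_\theta h)\circ f_\theta$; as an identity of continuous functions on the full support of $m_\theta$ this holds everywhere, and $g=h-\cL_\theta h=(\cL_\theta h)\circ f_\theta-\cL_\theta h$ exhibits $\langle v,\hat\omega(\cdot,\theta)\rangle$ as a $\cC^{r-1}$ (in particular smooth) coboundary for $f_\theta$ — with transfer function $\cL_\theta h$. (Alternatively one first gets a $\cB_1$ coboundary and then invokes Liv\v{s}ic regularity.) Hence, under the non-degeneracy hypothesis, $\langle v,\sigma^2(\theta)v\rangle>0$ for every $v\ne0$, i.e.\ $\sigma^2(\theta)$ is strictly positive definite.

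It remains to make this uniform in $s$, and to pinpoint where the content lies. The map $\theta\mapsto\sigma^2(\theta)$ is continuous: the series \eqref{e_definitionBarChi} converges uniformly in $\theta$ because the decay of correlations is $\theta$--uniform, and each summand is continuous in $\theta$ by the $\cC^1$ dependence of $m_\theta$ on $\theta$ and the smoothness of $\omega$. Therefore $s\mapsto\sigma^2(\ThetaAvg(s))$ is continuous on the compact interval $[0,T]$, its range is a compact set of symmetric matrices, all strictly positive definite by the previous paragraph, and so $\inf_{s\in[0,T]}\lambda_{\min}\big(\sigma^2(\ThetaAvg(s))\big)=:c^2>0$; equivalently $\sigma^2(s)\ge c^2\Id$, i.e.\ $\sigma(s)\ge c\,\Id$. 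The Green--Kubo manipulations, the linear algebra, and this final compactness argument are routine; the one step carrying genuine content is the martingale-type identity $\langle v,\sigma^2(\theta)v\rangle=\|h\|^2-\|\cL_\theta h\|^2$ and its equality case, where the contraction/projection structure of $\cL_\theta$ is decisive and produces the smooth transfer function directly. (Purely cosmetic: since $\bar\omega(\theta)$ is constant in $x$, $\langle v,\hat\omega(\cdot,\theta)\rangle$ is a coboundary for $f_\theta$ iff $\langle v,\omega(\cdot,\theta)\rangle$ is, so the conclusion matches the hypothesis as stated.)
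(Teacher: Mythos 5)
Your proposal is correct. The first half (symmetry, and non-negativity via the Ces\`aro limit $\frac1n m_\theta(S_n^2)\to\langle v,\sigma^2(\theta)v\rangle$) is essentially the paper's own argument. For the strict positivity/coboundary dichotomy, however, you take a genuinely different route. The paper argues in the degenerate direction $v$ that decay of correlations forces $\sup_n\|\sum_{k<n}\langle v,\omega\circ f_\theta^k\rangle\|_{L^2(m_\theta)}<\infty$, extracts a weak-$L^2$ accumulation point $\phi$ solving the cohomological equation, and then invokes the Livsic theorem to upgrade the $L^2$ solution to a smooth one. You instead solve the Poisson equation $g=h-\cL_\theta h$ with $h=\sum_{n\ge0}\cL_\theta^n g$ and exploit the fact that $U_\theta\cL_\theta$ is the orthogonal projection onto $\operatorname{Ran}U_\theta$ to get the identity $\langle v,\sigma^2(\theta)v\rangle=\|h\|_{L^2}^2-\|\cL_\theta h\|_{L^2}^2$, whose equality case hands you the coboundary with a transfer function that is already $\cC^{r-1}$. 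The trade-off: your route produces the smooth transfer function directly and avoids the measurable-Livsic step, but it uses the spectral gap of $\cL_\theta$ on a space of smooth functions (to make $h$ converge in $\cC^{r-1}$), which goes slightly beyond the bare correlation-decay hypotheses (1)--(3) listed in Section 1.2 --- standard for the $\cC^r$ expanding circle maps at hand, and covered by the cited references, but worth flagging. You also make explicit the continuity-plus-compactness step yielding the uniform bound $\sigma(s)\ge c\Id$ on $[0,T]$, which the paper leaves implicit; that is a small but genuine improvement in completeness.
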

\begin{proof}
For each $v\in\bR^d$ a direct computation shows that
\[
\begin{split}
&\lim_{n\to\infty}\frac1n m_\theta\left(\left[\sum_{k=0}^{n-1}\langle v,\omega(f_\theta^k(\cdot),\theta)\rangle\right]^2\right)=\lim_{n\to\infty}\frac1n\sum_{k,j=0}^{n-1}m_\theta\left(\langle v,\omega(f_\theta^k(\cdot),\theta)\rangle \langle v,\omega(f_\theta^j(\cdot),\theta)\rangle\right)\\
&=m_\theta(\langle v,\omega(\cdot,\theta)\rangle^2)+\lim_{n\to\infty}\frac 2n\sum_{k=1}^{n-1} (n-k)m_\theta(\langle \omega(\cdot,\theta),v\rangle \langle v,\omega(f^k_\theta(\cdot),\theta)\rangle)\\
&=m_\theta(\langle v,\omega(\cdot,\theta)\rangle^2)+2\sum_{k=1}^{\infty} m_\theta\left(\langle \omega(\cdot,\theta),v\rangle \langle v,\omega(f^k_\theta(\cdot),\theta)\rangle\right)=\langle v,\sigma(\theta)^2v\rangle.
\end{split}
\]
This implies that $\sigma(\theta)^2\geq 0$ and since it is symmetric, there exists, unique, $\sigma(\theta)$ symmetric and non-negative.
On the other hand if $\langle v,\sigma^2(\theta)v\rangle=0$, then, by the decay of correlations and the above equation, we have
\[
\begin{split}
m_\theta\left(\left[\sum_{k=1}^{n-1}\langle v,\omega(f_\theta^k(\cdot),\theta)\rangle\right]^2\right)&= n\, m_\theta(\langle v,\omega(\cdot,\theta)\rangle^2)\\
&\quad+2n\sum_{k=0}^{n-1}m_\theta\left(\langle v,\omega(\cdot,\theta)\rangle \langle v,\omega(f_\theta^k(\cdot),\theta)\rangle\right)+\cO(1)\\
&=2n\sum_{k=n}^{\infty}m_\theta\left(\langle v,\omega(\cdot,\theta)\rangle \langle v,\omega(f_\theta^k(\cdot),\theta)\rangle\right)+\cO(1)=\cO(1).
\end{split}
\]
Thus the $L^2$ norm of $\phi_n=\sum_{k=1}^{n-1}\langle v,\omega(f_\theta^k(\cdot),\theta)\rangle$ is uniformly bounded. Hence there exist a weakly convergent subsequence. Let $\phi\in L^2$ be an accumulation point, then for each $\vf\in\cC^1$ we have
\[
m_\theta(\phi\circ f_\theta \vf)=\lim_{k\to\infty} m_\theta(\phi_{n_k}\circ f_\theta \vf)=m_\theta(\phi \vf)-m_\theta(\langle v,\omega(\cdot,\theta)\rangle\vf)
\]
That is $\langle v,\omega(x,\theta)\rangle=\phi(x)-\phi\circ f_\theta(x)$. In other words $\langle
v,\omega(x,\theta)\rangle$ is an $L^2$ coboundary. Since the Livsic Theorem \cite{livs} states that the solution of the
cohomological equation must be smooth, we have $\phi\in\cC^1$.
\end{proof}

\subsection{Uniqueness of the Martingale Problem}\

\noindent We are left with the task of proving the uniqueness of the martingale problem. Note that in the present case the operator depends explicitly on time. Thus if we want to set the initial condition at a time $s\neq 0$ we need to slightly generalise the definition of martingale problem. To avoid this, for simplicity, here we consider only initial conditions at time zero, which suffice for our purposes. In fact, we will consider only the initial condition $\zeta(0)=0$, since it is the only one we are interested in. We have then the same definition of the martingale problem as in Definition \ref{def:martingale}, apart form the fact that $\cL$ is replaced by $\cL_s$ and $y=0$.

Since the operators $\cL_s$ are second order operators, we could use well known results. Indeed, there exists a deep
theory due do Stroock and Varadhan that establishes the uniqueness of the martingale problem for a wide class of second
order operators,~\cite{SV}. Yet, our case is especially simple because the coefficients of the higher order part of the
differential operator depend only on time and not on $\zeta$. In this case it is possible to modify a simple proof of
the uniqueness that works when all the coefficients depend only on time,~\cite[Lemma 6.1.4]{SV}. We provide here the
argument for the reader's convenience.
\begin{prop}\label{lem:unique}
The martingale problem associated to the operators $\cL_s$ in Proposition \ref{prop:diff-eq} has a unique solution.
\end{prop}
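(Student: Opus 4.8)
The plan is to exploit the crucial structural feature of the operators $\cL_s$: the second-order coefficients $\sigma^2(\ThetaAvg(s))$ depend only on $s$, not on $\zeta$, and the first-order coefficients are linear in $\zeta$. This places us essentially in the Gaussian world, and the strategy is to characterise a solution uniquely through its Fourier transform. Concretely, given a solution $\widetilde\bP$ of the martingale problem, define $\psi(\lambda,t)=\bE(e^{i\langle\lambda,\zeta(t)\rangle})$ for $\lambda\in\bR^d$. First I would apply the martingale property with the (complex, but this causes no difficulty since one may split into real and imaginary parts, or simply note the argument goes through for $\cC^3$ functions approximating $e^{i\langle\lambda,\cdot\rangle}$ on the relevant compact sets) test function $A(\zeta)=e^{i\langle\lambda,\zeta\rangle}$. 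One computes $\nabla A=i\lambda A$ and $\partial_{\zeta_i}\partial_{\zeta_j}A=-\lambda_i\lambda_j A$, so that
\[
\cL_s A(\zeta)=\left[i\langle\lambda,D\bar\omega(\ThetaAvg(s))\zeta\rangle-\tfrac12\langle\lambda,\sigma^2(\ThetaAvg(s))\lambda\rangle\right]e^{i\langle\lambda,\zeta\rangle}.
\]
Taking expectations in the martingale identity $\bE(A(\zeta(t))-A(\zeta(0))-\int_0^t\cL_sA(\zeta(s))ds)=0$ and using that $A(\zeta(0))=1$ almost surely, the term $\langle\lambda,D\bar\omega(\ThetaAvg(s))\zeta\rangle e^{i\langle\lambda,\zeta\rangle}$ can be rewritten as $-i\langle D\bar\omega(\ThetaAvg(s))^T\lambda,\partial_\lambda\rangle e^{i\langle\lambda,\zeta\rangle}$, so that upon differentiating in $t$ we obtain the closed first-order linear PDE
\[
\partial_t\psi(\lambda,t)=\langle D\bar\omega(\ThetaAvg(t))^T\lambda,\partial_\lambda\psi(\lambda,t)\rangle-\tfrac12\langle\lambda,\sigma^2(\ThetaAvg(t))\lambda\rangle\,\psi(\lambda,t),\qquad \psi(\lambda,0)=1,
\]
which is exactly the equation anticipated in Remark \ref{rem:gauss-sol}.

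The next step is to observe that this transport equation has a unique solution, obtained by the method of characteristics: the characteristic curves $\lambda(t)$ solve $\dot\lambda=-D\bar\omega(\ThetaAvg(t))^T\lambda$ (a linear ODE, hence with unique global solutions depending continuously on data, since $\bar\omega$ is $\cC^1$ and $\ThetaAvg$ is a fixed continuous curve), and along them $\psi$ satisfies an explicit scalar linear ODE whose solution is $\psi(\lambda,t)=\exp\!\big(-\tfrac12\int_0^t\langle\lambda(s),\sigma^2(\ThetaAvg(s))\lambda(s)\rangle ds\big)$ after tracking the characteristic back to time zero. In particular $\psi(\cdot,t)$ is, for every $t$, the characteristic function of a centered Gaussian with a fully determined covariance matrix, so the one-dimensional marginals of $\widetilde\bP$ are uniquely pinned down. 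To upgrade from marginals to the full law I would run the same computation conditionally: for $0\le s\le t$ and test function $e^{i\langle\lambda,\zeta(t)\rangle}$ the martingale property gives $\bE(e^{i\langle\lambda,\zeta(t)\rangle}\mid\cF_s)$ as the solution at time $t$ of the same PDE but with initial datum $e^{i\langle\lambda,\zeta(s)\rangle}$ at time $s$; solving along characteristics yields $\bE(e^{i\langle\lambda,\zeta(t)\rangle}\mid\cF_s)=\exp(i\langle\lambda(s\leftarrow t),\zeta(s)\rangle)\cdot(\text{deterministic factor})$, which shows that the finite-dimensional distributions of $\widetilde\bP$ are determined inductively, and a finite-dimensional law on $\cC^0([0,T],\bR^d)$ determines the measure. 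Alternatively, and perhaps more cleanly, one can pass through Remark \ref{rem:gauss-sol}: the above shows $\widetilde\bP$ makes $\zeta$ a Gaussian process with prescribed mean (zero) and prescribed covariance function, and such a process is unique.

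The main obstacle I anticipate is the use of complex-valued (or at least non-$\cC^3$-with-bounded-support) test functions in a martingale problem whose definition, as stated in Definition \ref{def:martingale}, is phrased for $A\in\cC^0(S,\bR^d)$ in the domain of $\cL_s$. This is a routine but necessary technical point: one must either (a) note that the martingale property extends to $A(\zeta)=e^{i\langle\lambda,\zeta\rangle}$ by truncating outside a large ball and controlling the error via the moment bounds \eqref{eq:momentum2}--\eqref{eq:momentum4} already established in Proposition \ref{lem:tight1} (which give uniform control on $\bE(\|\zeta(t)\|^4)$, hence tightness of the marginals and vanishing of tail contributions), or (b) work directly with polynomial test functions $A(\zeta)=\zeta_{i_1}\cdots\zeta_{i_k}$, deriving a closed hierarchy of ODEs for the moments $\bE(\zeta_{i_1}(t)\cdots\zeta_{i_k}(t))$—the hierarchy closes precisely because $\cL_s$ raises polynomial degree by at most zero on the top-degree part—and checking that the resulting moment sequence is that of the Gaussian and satisfies Carleman's condition, so it determines the law. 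Either route is standard; I would follow (b) in the interest of staying within the real-valued framework, following \cite[Lemma 6.1.4]{SV} as the template, with the moment bounds from Proposition \ref{lem:tight1} supplying the a priori integrability needed to justify differentiating under the expectation.
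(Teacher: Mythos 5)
Your proposal is correct and is essentially the paper's argument: both proofs exploit that the drift is linear in $\zeta$ and the diffusion coefficient depends only on time to compute the conditional characteristic (resp.\ moment generating) function in closed form, and then pin down the finite-dimensional distributions by iterating over $\cF_{t_{n-1}},\dots,\cF_{t_1}$. The only difference is packaging: the paper first conjugates the process by the matrix $S(t)$ solving $\dot S=-D\bar\omega(\ThetaAvg(t))S$ so as to kill the drift and then uses the exponential martingale $e^{\langle\lambda,\zeta\rangle-\frac12\int\langle\lambda,\widehat\Sigma^2\lambda\rangle}$, whereas you keep the drift and absorb it into the characteristics $\dot\lambda=-D\bar\omega(\ThetaAvg(t))^{T}\lambda$ of the transport equation for $\psi$ --- the same ODE in adjoint form --- and your worry about test functions outside the stated domain applies at least as much to the paper's own (unbounded, real-exponential) choice as to your bounded complex exponentials.
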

\begin{proof}
As already noticed, $\cL_t$, defined in \eqref{eq:secondorderop}, depends on $\zeta$ only via the coefficient of the first order part. It is then natural to try to change measure so that such a dependence is eliminated and we obtain a martingale problem with respect to an operator with all coefficients depending only on time, then one can conclude arguing as in  \cite[Lemma 6.1.4]{SV}. Such a reduction is routinely done in probability via the Cameron-Martin-Girsanov formula. Yet, given the simple situation at hand one can proceed in a much more naive manner. Let $S(t):[0,T]\to M_d$, $M_d$ being the space of $d\times d$ matrices, be the generated by the differential equation
\[
\begin{split}
&\dot S(t)=-D\overline\omega(\ThetaAvg(t)) S(t)\\
& S(0)=\Id.
\end{split}
\]
Note that, setting $\varsigma(t)=\det S(t)$ and $B(t)=D\overline\omega(\ThetaAvg(t))$, we have
\[
\begin{split}
&\dot\varsigma(t)=-\operatorname{tr}(B(t))\varsigma(t)\\
& \varsigma(0)=1.
\end{split}
\]
The above implies that $S(t)$ is invertible.

Define the map $\cS\in \cC^0(\cC^0([0,T],\bR^d), \cC^0([0,T],\bR^d))$ by $[\cS\zeta](t)=S(t)\zeta(t)$ and set $\overline\bP=\cS_*\widetilde\bP$.  Note that the map $\cS$ is invertible.
Finally, we define the operator
\[
\widehat\cL_t=\frac 12\sum_{i,j}[\widehat\Sigma(t)^2]_{i,j}\partial_{\zeta_i}\partial_{\zeta_j},
\]
where $\widehat\Sigma(t)^2=S(t)\sigma(t)^2S(t)^*$, $\sigma(t)=\sigma( \ThetaAvg(t))$ as mentioned after \eqref{eq:secondorderop}.
Let us verify that $\overline\bP$ satisfies the martingale problem with respect to the operators $\widehat\cL_t$.  By Lemma \ref{lem:basic-m} we have
\[
\begin{split}
\frac{d}{dt}\overline\bE(A(\zeta(t))\;|\;\cF_s)&=\frac{d}{dt}\widetilde\bE(A(S(t)\zeta(t))\;|\;\cF_s)\\
&=\widetilde\bE(\dot S(t)\nabla A(S(t)\zeta(t))+\cL_t A(S(t)\zeta(t))\;|\;\cF_s)\\
&=\frac 12\widetilde\bE\left(\sum_{i,j,k,l}\sigma^2(t)_{i,j}\partial_{\zeta_k}\partial_{\zeta_l} A(S(t)\zeta(t)) S(t)_{k,i}S(t)_{l,j}\;\bigg|\;\cF_s\right)\\
&=\overline\bE(\widehat\cL_t A(\zeta(t))\;|\;\cF_s).
\end{split}
\]
Thus the claim follows by Lemma \ref{lem:basic-m} again.

Accordingly, if we prove that the above martingale problem has a unique solution, then $\overline\bP$ is uniquely determined, which, in turn, determines uniquely $\widetilde\bP$, concluding the proof.

Let us define the function $B\in\cC^1(\bR^{2d+1},\bR)$ by
\[
B(t,\zeta,\lambda)=e^{\langle\lambda,\zeta\rangle-\frac 12\int_s^t\langle \lambda,\widehat\Sigma(\tau)^2\lambda\rangle d\tau}
\]
then Lemma \ref{lem:basic-m} implies
\[
\frac d{dt}\overline\bE(B(t,\zeta(t),\lambda)\;|\;\cF_s)=\overline\bE(-\frac 12\langle\lambda,\widehat\Sigma(t)^2\lambda\rangle B(t,\zeta(t),\lambda)+\widehat\cL_tB(t,\zeta(t),\lambda)\;|\;\cF_s)=0.
\]
Hence
\[
\overline\bE(e^{\langle\lambda,\zeta(t)\rangle}\;|\;\cF_s)=e^{\langle\lambda,\zeta(s)\rangle+\frac 12\int_s^t\langle \lambda,\widehat\Sigma(\tau)^2\lambda\rangle d\tau}.
\]
From this follows that the finite dimensional distributions are uniquely determined. Indeed, for each $n\in\bN$, $\{\lambda_i\}_{i=1}^n$ and $0\leq t_1<\dots<t_n$ we have
\[
\begin{split}
\overline\bE\left(e^{\sum_{i=1}^n\langle\lambda_i,\zeta(t_i)\rangle}\right)&=\overline\bE\left(e^{\sum_{i=1}^{n-1}\langle\lambda_i,\zeta(t_i)\rangle}\overline\bE\left(e^{\langle\lambda_n,\zeta(t_n)\rangle}\;\big|\;\cF_{t_{n-1}}\right)\right)\\
&=\overline\bE\left(e^{\sum_{i=1}^{n-2}\langle\lambda_i,\zeta(t_i)\rangle+\langle\lambda_{n-1}+\lambda_n,\zeta(t_{n-1})\rangle}\right)
e^{\frac 12\int_{t_{n-1}}^{t_n}\langle \lambda_n,\widehat\Sigma(\tau)^2\lambda_n\rangle d\tau}\\
&=e^{\frac 12\int_{0}^{t_n}\langle\sum_{i=n(\tau)}^{n} \lambda_i,\widehat\Sigma(\tau)^2\sum_{i=n(\tau)}^{n} \lambda_i\rangle d\tau}
\end{split}
\]
where $n(\tau)=\inf\{m\;|\; t_m\geq \tau\}$. This concludes the Lemma since it implies that the measure is uniquely determined on the sets that generate the $\sigma$-algebra.\footnote{ See the discussion at the beginning of Section \ref{sec:prelim}.} Note that we have also proven that the process is a zero mean Gaussian process; this, after translating back to the original measure, generalises Remark \ref{rem:gauss-sol}.
\end{proof}
\appendix
%%%%%%%%%%%%%%%%%%%%%%%%%%%%%%%%%%%%%%%%%%%%%%
%%%%%%%%%%%%%%%%%%%%%%%%%%%%%%%%%%%%%%%%%%%%%%
%% Geometry
%%%%%%%%%%%%%%%%%%%%%%%%%%%%%%%%%%%%%%%%%%%%%%
%%%%%%%%%%%%%%%%%%%%%%%%%%%%%%%%%%%%%%%%%%%%%%
\section{Geometry}
For $c>0$, consider the cones $\cC_c=\{(\xi,\eta)\in\bR^2\;:\; |\eta|\leq\ve c|\xi|\}$.  Note that
\[
\deh F_\ve=\begin{pmatrix} \partial_{x}f &\partial_\theta f\\ \ve\partial_x\omega &1+\ve\partial_
\theta\omega\end{pmatrix}.
\]
Thus if $(1,\ve u)\in\cC_c$,
\begin{align*}
  \deh_pF_\ve(1,\ve u) &= (\partial_{x}f(p) +\ve u\partial_\theta
  f(p),\ve\partial_x\omega(p) +\ve u+\ve^2 u\partial_\theta\omega(p))\\
  &=\partial_{x}f(p)\left(1 +\ve \frac{\partial_\theta
      f(p)}{\partial_{x}f(p)}u\right)\cdot (1,\ve \Xi_p(u))
\end{align*}
where
\begin{equation}\label{e_evolutionXi}
  \Xi_p(u)=\frac{\partial_x\omega(p) + (1+\ve\partial_\theta\omega(p))u}{\partial_{x}f(p)
    + \ve \partial_\theta f(p)u}.
\end{equation}
Thus the vector $(1,\ve u)$ is mapped to the vector $(1,\ve \Xi_p(u))$.
Thus letting  $K=\max\{\|\partial_x\omega\|_\infty, \|\partial_\theta\omega\|_\infty, \|\partial_\theta f\|_\infty\}$ we have, for $|u|\leq c$ and assuming $K\ve c\leq 1$,
\[
|\Xi_p(u)|\leq \frac {K+(1+\ve K)c}{\lambda-\ve Kc}\leq\frac{K+1+c}{\lambda-1}.
\]
Thus, if we choose $c\in[ \frac{K+1}{\lambda-2},(\ve K)^{-1}]$ we have that $\deh_pF_\ve(\cC_c)\subset\cC_c$. Since this implies that $\deh_pF_\ve^{-1}\complement\cC_c \subset \complement\cC_c$ we have that the complementary cone $\complement\cC_{K\vei}$ is invariant under $\deh F_{\ve}^{-1}$. From now on we fix $c=\frac{K+1}{\lambda-2}$.  

Hence, for any $p\in\bT^{1+d}$ and
$n\in\bN$, we can define the quantities $\nuold_n,u_n,\sigmaold_n,\muold_n$ as follows:
\begin{align}\label{e_defineSlopes}
  \deh_p F_{\ve}^n (1,0)&=\nuold_n ( 1, \ve u_n )&\deh_p F_{\ve}^n (\sigmaold_n ,1) &= \muold_n (0,1)
\end{align}
with $|u_n|\leq c$ and $|\sigmaold_n|\leq K$.  For each $n$ the slope field $\sigmaold_n$ is
smooth, therefore integrable; given any small $\Delta>0$ and $p=(x,\theta)\in\bT^{1+d}$,
define $\cW_n^\nt(p,\Delta)$ the \emph{local $n$-step central manifold of size $\Delta$}
as the connected component containing $p$ of the intersection with the strip
$\{|\theta'-\theta|<\Delta\}$ of the integral curve of $(\sigmaold_n,1)$ passing through $p$.

Notice that, by definition, $\deh_p F_\ve(\sigmaold_n(p),1) = \muold_n/\muold_{n-1}(\sigmaold_{n-1}(F_
\ve p),1)$; thus, by
definition, there exists a constant $\expb$ such that:
\begin{equation}\label{e_trivialBound}
  \expo{-\expb\ve}\leq \frac{\muold_n}{\muold_{n-1}} \leq \expo{\expb\ve}.
\end{equation}
Furthermore, define $\Gamma_n=\prod_{k=0}^{n-1}\partial_x f\circ F_\ve^k$, and let
\begin{equation}\label{e_eqa}
  \expa=c\left\|\frac{\partial_\theta f}{\partial_x f}\right\|_\infty.
\end{equation}
Clearly,
\begin{equation}\label{e_***}
\Gamma_n\expo{-\expa\ve n} \leq\nuold_n\leq \Gamma_n\expo{\expa\ve n}.
\end{equation}

\section{Shadowing}\label{sec:shadow}
In this section we provide a simple quantitative version of shadowing that is needed in the argument.  Let
$(x_k,\theta_k)=F_\ve^k(x,\theta)$ with $k\in\{0,\dots, n\}$.  We assume that $\theta$ belongs to the range of a
standard pair $\ell$ (i.e., $\theta=G(x)$ for some $x\in [a,b]$).

Let $\theta^*\in S$ such that $\|\theta^*-\theta\|\leq \ve$ and set $f_*(x)=f(x,\theta^*)$.
Let us denote with $\pi_x: X\to S$ the canonical projection on the $x$ coordinate;
then, for any $s\in [0,1]$, let
\begin{align*}
  H_{n}(x, z, s)&=\pi_x F_{s\ve}^n(x,\theta^*+s(G_\ell(x)-\theta^*))- f_*^n(z)
\end{align*}
Note that, $H_{n}(x,x,0)=0$, in addition, for any $x,z$ and $s\in[0,1]$
\[
\partial_{z}H_{n}(x,z,s)=- (f_*^n)'(z).
\]
Accordingly, by the Implicit Function Theorem any $n\in\bN$ and $s\in [0,1]$, there exists $Y_n(x,s)$ such
that\footnote{ The Implicit Function Theorem allows to define $Y_n(x,s)$ in a neighborhood of $s=0$; in fact we claim
  that this neighborhood necessarily contains $[0,1]$.  Otherwise, there would exist $\bar s\in(0,1)$ and $\bar x$ so
  that $Y_n$ is defined at $(\bar x,\bar s)$ but not at $(\bar x,s)$ with $s>\bar s$.  We then could apply the Implicit
  Function Theorem at the point $(\bar x,Y_n(\bar x,\bar s),\bar s)$ and obtain, by uniqueness, an extension of the
  previous function $Y_n$ to a larger neighborhood of $s=0$, which contradicts our assumption.}
$H_{n}(x,Y_n(x,s),s)=0$; from now on $Y_n(x)$ stands for $Y_n(x,1)$.  Note that setting $x^*_k=f_*^k(Y_n(x))$, by
construction, $x^*_n=x_n$. Observe moreover that
\begin{equation}\label{eq:Y-der}
\partial_x Y_n = (f_*^n)'(z)\invr \deh (\pi_x F_{\ve}^n) =
\frac{(1-G'_\ell\sigmaold_n)\nuold_n}{(f_*^n)'\circ Y_n},
\end{equation}
where we have used the notations introduced in equation~\eqref{e_defineSlopes}.  Recalling
\eqref{e_***} and by the cone condition we have
\begin{equation}\label{eq:deriv-Y0}
e^{-c_\# \ve n}\prod_{k=0}^{n-1} \frac{\partial_x f(x_k,\theta_k)}{f_*'(x^*_k)} \leq
\left| \frac{(1-G'_\ell\sigmaold_n)\nuold_n}{(f_*^n)'}\right| \leq e^{c_\# \ve n}\prod_{k=0}^{n-1}
\frac{\partial_x f(x_k,\theta_k)}{f_*'(x^*_k)}.
\end{equation}

Next, we want to estimate to which degree $x^*_k$ shadows the true
trajectory.
\begin{lem}\label{lem:shadow-q} There exists $C>0$ such that, for each $k\leq n< C \ve^{-\frac 12}$ we have
\[
\begin{split}
&\|\theta_k-\theta^*\|\leq \Const \ve k\\
&|x_k-x^*_k|\leq C_\# \ve k.
\end{split}
\]
\end{lem}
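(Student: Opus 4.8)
The plan is to treat the two estimates separately: the bound on $\theta_k$ is essentially free, while the bound on $x_k$ is obtained by using the hyperbolicity of $f$ \emph{backward} in time, starting from $k=n$. For the slow variable I would simply unfold the recursion $\theta_{k+1}=\theta_k+\ve\omega(x_k,\theta_k)$, which gives $\theta_k=\theta_0+\ve\sum_{j=0}^{k-1}\omega(x_j,\theta_j)$ and hence
\[
\|\theta_k-\theta^*\|\le\|\theta_0-\theta^*\|+\ve\sum_{j=0}^{k-1}\|\omega(x_j,\theta_j)\|\le\ve+\ve k\,\|\omega\|_{\cC^0}\le\Const\,\ve k ,
\]
using only the standing hypothesis $\|\theta^*-\theta\|\le\ve$. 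This part requires no restriction on $n$ and involves only the true orbit $(x_k,\theta_k)$.

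For the fast variable the key point is that forward iteration expands, so $x_k$ and $x_k^*$ must be compared \emph{backward}, starting from $k=n$, where $x_n^*=x_n$ by the very construction of $Y_n$. Set $e_k:=|x_k-x_k^*|$ (distance on $\bT^1$), so that $e_n=0$. Using $x_{k+1}=f(x_k,\theta_k)$, $x_{k+1}^*=f_*(x_k^*)=f(x_k^*,\theta^*)$, and adding and subtracting $f(x_k,\theta_k)$, I would write
\[
f_*(x_k)-f_*(x_k^*)=\bigl(x_{k+1}-x_{k+1}^*\bigr)+\bigl(f(x_k,\theta^*)-f(x_k,\theta_k)\bigr).
\]
By \eqref{eq:hyperbolicity} the left-hand side has absolute value at least $\lambda e_k$, whereas the last term is at most $\|\partial_\theta f\|_{\cC^0}\,\|\theta_k-\theta^*\|\le\Const\,\ve k$ by the previous step; hence $\lambda e_k\le e_{k+1}+\Const\,\ve k$. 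Iterating this inequality downward from $e_n=0$ and summing the resulting geometric series — convergent because $\lambda>2$ — I obtain $e_k\le\Const\,\ve\sum_{i\ge0}\lambda^{-(i+1)}(k+i)\le\Const\,\ve k$, which is the desired bound.

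The step that requires care, and the one I would write out in full, is the legitimacy of the estimate $|f_*(x_k)-f_*(x_k^*)|\ge\lambda e_k$ and of the triangle inequalities above: they hold only while all the distances involved stay below the injectivity scale of $f$, otherwise one could be fooled by the several inverse branches of $f(\cdot,\theta)$. This is precisely where the hypothesis $n<C\ve^{-\frac12}$ and the homotopy definition of $Y_n$ come in. Starting the downward induction at the exact identity $e_n=0$, the recursion $\lambda e_k\le e_{k+1}+\Const\,\ve k$ keeps every $e_k$ of order $\ve n=\cO(\ve^{\frac12})$, hence uniformly small once $\ve$ is small; and the fact that $Y_n(x)=Y_n(x,1)$ is obtained by continuing the solution of $H_n=0$ from $Y_n(x,0)=x$ along $s\in[0,1]$ guarantees that $x_k^*=f_*^k(Y_n(x))$ lies in the inverse branch that shadows $x_k$, so that comparing $x_k$ and $x_k^*$ through nearby lifts is consistent. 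With this a priori smallness secured, the recursion above is rigorous and the induction closes.
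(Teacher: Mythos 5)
Your proof is correct and follows essentially the same route as the paper's: unfold the recursion for $\theta_k$, then compare $x_k$ and $x_k^*$ via a backward induction from the exact identity $x_n^*=x_n$, using the expansion $\partial_x f\ge\lambda>2$ to turn the error recursion into a convergent geometric series. The paper phrases the one-step estimate directly through the mean value theorem on $\xi_{k+1}=\partial_x f\cdot\xi_k+\partial_\theta f\cdot(\theta_k-\theta^*)$ rather than your add-and-subtract, and relegates the lifting/injectivity point to a footnote, but these are only cosmetic differences.
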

\begin{proof}
Observe that
\[
\theta_k=\ve\sum_{j=0}^{k-1}\omega(x_j,\theta_j)+\theta_0
\]
thus $\|\theta_k-\theta^*\|\leq \Const \ve k$.  Accordingly, let us set\footnote{ Here, as we already done before, we
  are using the fact that we can lift $\bT^1$ to the universal covering $\bR$.} $\xi_k=x_k^*-x_{k}$; then, by the mean
value theorem,
\begin{align*}
 | \xi_{k+1}| &= |\partial_x f\cdot \xi_k+\partial_\theta f\cdot (\theta_k-\theta^*)|\\
  &\geq \lambda |\xi_k|-\Const \ve k.
\end{align*}
Since, by definition, $\xi_n=0$, we can proceed by backward induction, which yields
\[
|\xi_k|\leq \sum_{j=k}^{n-1}\lambda^{-j+k}\Const \ve j\leq \Const \ve\sum_{j=0}^{\infty}\lambda^{-j}(j+k)\leq \Const \ve k.\qedhere
\]
\end{proof}
\begin{lem}\label{lem:der-bounds-geo}
There exists $C>0$ such that, for each $ n\leq C \ve^{-\frac 12}$,
\begin{equation}\label{eq:deriv-Y}
e^{-c_\# \ve n^2} \leq | Y'_n| \leq e^{c_\# \ve n^2}.
\end{equation}
In particular, $Y_n$ is invertible with uniformly bounded derivative.
\end{lem}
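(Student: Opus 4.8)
The plan is to read $|Y'_n|$ off the identity \eqref{eq:Y-der},
\[
\partial_x Y_n = \frac{(1-G'_\ell\sigmaold_n)\nuold_n}{(f_*^n)'\circ Y_n},
\]
and to use the two-sided estimate \eqref{eq:deriv-Y0}, which already isolates the only non-trivial quantity, namely the product $P_n:=\prod_{k=0}^{n-1}\frac{\partial_x f(x_k,\theta_k)}{f_*'(x^*_k)}$ with $x^*_k=f_*^k(Y_n(x))$. Passing to logarithms, $\log P_n=\sum_{k=0}^{n-1}\bigl[\log\partial_x f(x_k,\theta_k)-\log\partial_x f(x^*_k,\theta^*)\bigr]$, so the whole statement reduces to the bound $|\log P_n|\leq c_\#\ve n^2$.

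To estimate this sum I would invoke the regularity of $f$: since $f\in\cC^r$ with $r\geq 3$ and $\partial_x f$ is bounded away from $0$ (indeed $\partial_x f\geq\lambda>2$) and from $\infty$ on the compact space $X$, the function $\log\partial_x f$ is Lipschitz, whence
\[
\bigl|\log\partial_x f(x_k,\theta_k)-\log\partial_x f(x^*_k,\theta^*)\bigr|\leq\Const\bigl(|x_k-x^*_k|+\|\theta_k-\theta^*\|\bigr).
\]
By Lemma \ref{lem:shadow-q}, for $k\leq n< C\ve^{-\frac12}$ both $|x_k-x^*_k|$ and $\|\theta_k-\theta^*\|$ are $\leq\Const\ve k$, so each summand is $\leq\Const\ve k$ and therefore $|\log P_n|\leq\Const\ve\sum_{k=0}^{n-1}k\leq\Const\ve n^2$. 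Inserting this into \eqref{eq:deriv-Y0} and absorbing its $e^{\pm c_\#\ve n}$ prefactor (harmless, since $\ve n\leq\ve n^2$ for $n\geq1$) yields $e^{-c_\#\ve n^2}\leq|Y'_n|\leq e^{c_\#\ve n^2}$.

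For the last assertion, on the range $n\leq C\ve^{-\frac12}$ one has $\ve n^2\leq C^2$, so $|Y'_n|$ lies in a compact subset of $(0,\infty)$ depending only on $C$, $f$ and $\omega$; moreover $Y'_n$ keeps a constant sign because $\nuold_n>0$, $(f_*^n)'>0$, and $1-G'_\ell\sigmaold_n>0$ once $\ve$ is small enough (using $\|G'_\ell\|\leq\ve c_1$ and $|\sigmaold_n|\leq K$). Hence $Y_n$ is a strictly monotone $\cC^1$ map with derivative bounded away from $0$, so it is invertible onto its image with uniformly bounded inverse derivative. I do not anticipate a genuine obstacle: the one point deserving care is that the linear-in-$k$ shadowing bound of Lemma \ref{lem:shadow-q}, once summed over $k\leq n$, must stay $\cO(1)$, which is exactly what the restriction $n\leq C\ve^{-\frac12}$ guarantees; everything else is a one-line Lipschitz estimate.
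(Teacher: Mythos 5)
Your proof is correct and follows essentially the same route as the paper: pass to logarithms in \eqref{eq:deriv-Y0}, use the Lipschitz regularity of $\log\partial_x f$ together with the shadowing bounds of Lemma \ref{lem:shadow-q} to bound each summand by $\Const\ve k$, and sum to get $\Const\ve n^2$. The extra detail you supply on invertibility (sign of $Y_n'$ and boundedness away from zero on the range $n\leq C\ve^{-1/2}$) is consistent with what the paper leaves implicit.
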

\begin{proof}
Let us prove the upper bound, the lower bound being similar. By equations \eqref{eq:Y-der}, \eqref{eq:deriv-Y0} and Lemma \ref{lem:shadow-q} we have
\[
|Y'_n|\leq e^{c_\# \ve n}e^{\sum_{k=0}^{n-1}
\ln\partial_x f(x_k,\theta_k)-\ln  f_*'(x^*_k)}\leq e^{\const \ve n}e^{\const\sum_{k=0}^{n-1}\ve k}.\qedhere
\]
\end{proof}
%%%%%%%%%%%%%%%%%%%%%%%%%%%%%%%
\section{Martingales, operators and It\=o's calculus}\label{sec:moi}
Suppose that $\cL_t\in L(\cC^r(\bR^d,\bR),\cC^0(\bR^d,\bR))$, $t\in\bR$, is a one parameter family of bounded linear
operators that depends continuously on $t$.\footnote{ Here $\cC^r$ are thought as Banach spaces, hence consist of
  bounded functions.  A more general setting can be discussed by introducing the concept of a {\em local martingale.}}
Also suppose that $\bP$ is a measure on $\cC^0([0,T],\bR^d)$ and let $\cF_t$ be the $\sigma$-algebra generated by the
variables $\{z(s)\}_{s\leq t}$.\footnote{ At this point the reader is supposed to be familiar with the intended meaning:
  for all $\vartheta\in \cC^0([0,T],\bR^d)$, $[z(s)](\vartheta)=z(\vartheta,s)=\vartheta(s)$.}

\begin{lem}\label{lem:basic-m} The two properties below are equivalent:
\begin{enumerate}
\item For all $A\in\cC^1(\bR^{d+1},\bR)$, such that, for all $t\in \bR$, $A(t,\cdot)\in \cC^r(\bR^{d},\bR)$, and for all times $s,t\in [0,T], s<t$, the function $g(t)=\bE(A(t, z(t))\;|\;\cF_s)$ is differentiable and
$g'(t)=\bE(\partial_t A(t,z(t))+\cL_t A(t,z(t))\;|\;\cF_s)$.
\item For all $A\in\cC^r(\bR^d,\bR)$, $M(t)=A(z(t))-A(z(0))-\int_0^t\cL_s A(z(s)) ds$ is a martingale with respect to $\cF_t$.
\end{enumerate}
\end{lem}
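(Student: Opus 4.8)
The plan is to prove the two implications separately, using throughout that $A$ (respectively each $A(t,\cdot)$) lies in the Banach space $\cC^r$ and is therefore bounded, and that since $t\mapsto\cL_t$ is continuous into $L(\cC^r,\cC^0)$ on the compact interval $[0,T]$, the functions $\cL_tA$ (respectively $\cL_tA(t,\cdot)$) occurring below are uniformly bounded in $\cC^0$; together with the continuity of the path $\sigma\mapsto z(\sigma)$ this makes every $d\sigma$-integral absolutely convergent, legitimises Fubini for conditional expectations, and makes the relevant integrands continuous in $\sigma$, so their primitives are $\cC^1$. For $(1)\Rightarrow(2)$ I would apply $(1)$ to a \emph{time-independent} $A\in\cC^r$: then $g(s)=\bE(A(z(s))\st\cF_s)=A(z(s))$ and $g'(\sigma)=\bE(\cL_\sigma A(z(\sigma))\st\cF_s)$, so integrating from $s$ to $t$ and using Fubini gives $\bE(A(z(t))\st\cF_s)=A(z(s))+\bE\big(\int_s^t\cL_\sigma A(z(\sigma))\,d\sigma\st\cF_s\big)$. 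Subtracting $A(z(0))+\int_0^s\cL_\sigma A(z(\sigma))\,d\sigma$, which is $\cF_s$-measurable, this is precisely $\bE(M(t)\st\cF_s)=M(s)$; since $M(t)$ is bounded, $M$ is a martingale.

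For $(2)\Rightarrow(1)$ I would first treat the time-independent case. For $A\in\cC^r$ and $s'\le u\le v$, conditioning the martingale identity $\bE(M(v)\st\cF_u)=M(u)$ on $\cF_{s'}$ and cancelling the common terms $A(z(0))+\int_0^u\cL_\sigma A\,d\sigma$ gives
\[
\bE\big(A(z(v))-A(z(u))\st\cF_{s'}\big)=\int_u^v\bE\big(\cL_\sigma A(z(\sigma))\st\cF_{s'}\big)\,d\sigma ,
\]
and taking $s'=u=s$, $v=t$ and differentiating in $t$ proves $(1)$ when $A$ is time-independent. For general $A$ I would use a telescoping/Riemann-sum argument: fix $s<t$, $N\in\bN$, $\sigma_k=s+k(t-s)/N$, and write
\[
A(t,z(t))-A(s,z(s))=\sum_{k=0}^{N-1}\big(A(\sigma_{k+1},z(\sigma_{k+1}))-A(\sigma_k,z(\sigma_{k+1}))\big)+\sum_{k=0}^{N-1}\big(A(\sigma_k,z(\sigma_{k+1}))-A(\sigma_k,z(\sigma_k))\big).
\]
In the first sum I use $A(\sigma_{k+1},x)-A(\sigma_k,x)=\int_{\sigma_k}^{\sigma_{k+1}}\partial_\sigma A(\sigma,x)\,d\sigma$; applying $\bE(\cdot\st\cF_s)$ and using uniform continuity of $\partial_\sigma A$ on compact sets and of the path on $[s,t]$, it tends as $N\to\infty$ to $\int_s^t\bE(\partial_\sigma A(\sigma,z(\sigma))\st\cF_s)\,d\sigma$. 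In the second sum I apply the time-independent case to the frozen function $A(\sigma_k,\cdot)$ with $s'=u=\sigma_k$, $v=\sigma_{k+1}$, then average by $\bE(\cdot\st\cF_s)=\bE\big(\bE(\cdot\st\cF_{\sigma_k})\st\cF_s\big)$ (valid since $s\le\sigma_k$); uniform continuity of $\sigma\mapsto\cL_\sigma A(\sigma,\cdot)\in\cC^0$ then gives the limit $\int_s^t\bE(\cL_\sigma A(\sigma,z(\sigma))\st\cF_s)\,d\sigma$. Combining, $g(t)=A(s,z(s))+\int_s^t\bE\big(\partial_\sigma A(\sigma,z(\sigma))+\cL_\sigma A(\sigma,z(\sigma))\st\cF_s\big)\,d\sigma$ with a $\sigma$-continuous integrand, so $g$ is $\cC^1$ with the asserted derivative, i.e. $(1)$ holds.

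The only genuinely delicate part is the error control in Step~2 of $(2)\Rightarrow(1)$: showing that the gap between each of the two sums and its target integral is $o(1)$ as $N\to\infty$, uniformly enough to survive passage through the conditional expectation $\bE(\cdot\st\cF_s)$. This comes down to the uniform continuity on $[s,t]$ of $(\sigma,x)\mapsto\partial_\sigma A(\sigma,x)$ (restricted to the compact image of the path), of $\sigma\mapsto\cL_\sigma A(\sigma,\cdot)$, and of the continuous path $z$ itself, together with dominated convergence for conditional expectations of the uniformly bounded integrands; everything else — Fubini, the tower property, and the concluding fundamental theorem of calculus — is routine.
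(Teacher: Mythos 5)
Your proposal is correct and follows essentially the same route as the paper: the $(1)\Rightarrow(2)$ direction is the fundamental theorem of calculus applied to $\tau\mapsto\bE(\cdot\st\cF_s)$ together with Fubini for conditional expectations (the paper shows this function has vanishing derivative and evaluates at the endpoints, you integrate the derivative forward --- the same computation), and the $(2)\Rightarrow(1)$ direction rests on the identical key idea of freezing the time variable in $A$ and invoking the martingale property of the frozen function through the tower property. The only difference is one of packaging: the paper extracts $g'(t)$ from a single difference quotient on $[t,t+h]$, whereas you telescope over a partition of $[s,t]$ to first establish the integral identity $g(t)=g(s)+\int_s^t\bE\bigl(\partial_\sigma A(\sigma,z(\sigma))+\cL_\sigma A(\sigma,z(\sigma))\st\cF_s\bigr)\,d\sigma$ and then differentiate; this costs you the (harmless, implicitly assumed in the paper as well) uniform continuity of $\sigma\mapsto A(\sigma,\cdot)$ in $\cC^r$ to control the Riemann-sum errors, but is equivalent in substance.
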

\begin{proof}
  Let us start with $(1)\Rightarrow (2)$.  Let us fix $t\in[0,T]$, then for each $s\in[0,t]$ let us define the random
  variables $B(s)$ by
\[
B(s,z)=A(z(t))-A(z(s))-\int_s^t\cL_\tau A(z(\tau))d\tau.
\]
Clearly, for each $z\in\cC^0$, $B(s,z)$ is continuous in $s$, and $B(t,z)=0$.  Hence, for all $\tau\in (s,t]$, by Fubini we have\footnote{ If uncomfortable about applying Fubini to conditional expectations, then have a look at \cite[Theorem 4.7]{V2}.}
\[
\begin{split}
\frac d{d\tau}\bE(B(\tau)\;|\; \cF_s)&=-\frac d{d\tau}\bE(A(z(\tau))\;|\; \cF_s)-\frac d{d\tau}\int_\tau^t\bE(\cL_r A(z(r))\;|\; \cF_s)dr\\
&=\bE(-\cL _\tau A(z(\tau))+\cL_\tau A(z(\tau))\;|\;\cF_s)=0.
\end{split}
\]
Thus, since $B$ is bounded, by Lebesgue dominated convergence theorem, we have
\[
0=\bE(B(t)\;|\; \cF_s)=\lim_{\tau\to 0}\bE(B(\tau)\;|\; \cF_s)=\bE(B(s)\;|\; \cF_s).
\]
This implies
\[
\bE(M(t)\;|\;\cF_s)=\bE(B(s)\;|\; \cF_s)+M(s)=M(s)
\]
as required.

Next, let us check $(2)\Rightarrow (1)$.  For each $h>0$ we have
\[
\begin{split}
\bE(A(t+h,z(t+h))-&A(t,z(t))\;|\;\cF_s)=\bE\left((\partial_t A)(t, z(t+h))\;|\;\cF_s\right)h+o(h)\\
&+\bE\left(M(t+h)-M(t)+\int_t^{t+h}\cL_\tau A(t, z(\tau))d\tau\;|\;\cF_s\right).
\end{split}
\]
Since $M$ is a martingale $\bE\left(M(t+h)-M(t)\;|\;\cF_s\right)=0$.  The lemma follows by Lebesgue dominated
convergence theorem.
\end{proof}

The above is rather general, to say more it is necessary to specify other properties of the family of operators $\cL_s$. A case of particular interest arises for second order differential operators like \eqref{eq:secondorderop}.
Namely, suppose that
\[
(\cL_s A)(z)= \sum_{i} a(z,s)_i\partial_{z_i}A(z)+\frac 12\sum_{i,j=1}^d[\sigma^2(z, s))]_{i,j} \partial_{z_i}\partial_{z_j}A(z),
\]
where, for simplicity, we assume $a, \sigma$ to be smooth and bounded and $\sigma_{ij}=\sigma_{ji}$. Clearly, \eqref{eq:secondorderop} is a special case of the above.
In such a case it turns out that it can be established a strict connection between $\cL_s$ and the Stochastic Differential Equation
\begin{equation}\label{eq:sde-general}
 d z= a dt+ \sigma dB
\end{equation}
where $B$ is the standard Brownian motion. The solution of \eqref{eq:sde-general} can be defined in various way. One possibility is to define it as the solution of the Martingale problem \cite{SV}, another is to use stochastic integrals \cite[Theorem 6.1]{V2}. The latter, more traditional, approach leads to It\=o's formula that reads, for each bounded continuous function $A$ of $t$ and $z$, \cite[page 91]{V2},
\[
\begin{split}
A(z(t),t)-A(z(0),0)=&\int_0^t \partial_s A(z(s),s)ds+\int_0^t \sum_i a(z(s),s)_i\partial_{z_i} A(z(s),s) ds\\
&+\frac 12\int_0^t\sum_{i,j}\sigma^2(z(s),s)\partial_{z_i}\partial_{z_j} A(z(s),s) ds\\
&+\sum_{i,j}\int_0^t\sigma_{ij}(z(s),s)\partial_{z_j}A(z(s),s)_j dB_i(s)
\end{split}
\]
where the last is a stochastic integral \cite[Theorem 5.3]{V2}. This formula is often written in the more impressionistic form
\[
dA=\partial_t A dt+a\partial_z A dt+\sigma\partial_z A dB+\frac 12\sigma^2\partial_z^2 A dt = \partial_t A dt+\sigma\partial_z A dB+\cL_t A dt.
\]
Taking the expectation with respect to $\bE(\cdot\;|\;\cF_s)$ we obtain exactly condition (1) of Lemma \ref{lem:basic-m}, hence we have that the solution satisfies the Martingale problem.

\begin{rem} Note that, if one defines the solution of \eqref{eq:sde-general} as the solution of the associated Martingale problem, then one can dispense from It\=o's calculus altogether. This is an important observation in our present context in which the fluctuations come form a deterministic problem rather than from a Brownian motion and hence a direct application of It\=o's formula is not possible.
\end{rem}

\end{document}